\documentclass{birkau}
\usepackage{amsmath,amssymb,url}
\usepackage{hyperref}

\theoremstyle{plain}
\newtheorem{theorem}{Theorem}[section]

\newtheorem{proposition}[theorem]{Proposition}
\newtheorem{lemma}[theorem]{Lemma}
\newtheorem{corollary}[theorem]{Corollary}

\theoremstyle{definition}
\newtheorem{example}[theorem]{Example}
\newtheorem{definition}[theorem]{Definition}

\theoremstyle{remark}
\newtheorem{remark}[theorem]{Remark}

\newcommand{\PosId}{\operatorname{Id}^{+}}
\newcommand{\Sk}{\operatorname{Sk}}
\newcommand{\tprod}[2]{#1\ast_{\tau}#2}
\newcommand{\CCtau}{\operatorname{CC}_{\tau}}
\newcommand{\CCm}{\operatorname{CC}_{\mathrm m}}
\newcommand{\layer}[1]{L_{#1}}
\newcommand{\Ctau}{\mathcal C_{\tau}}
\newcommand{\Cm}{\mathcal C_{\mathrm m}}
\newcommand{\Ltau}{\mathcal L_{\tau}}
\newcommand{\Lm}{\mathcal L_{\mathrm m}}

\makeatletter
\newcommand{\taggeditem}[2]{%
 \item[#1]%
 \def\@currentlabel{#1}%
 \label{#2}%
}
\makeatother

\begin{document}

\title[Resolved-transport decomposition]{\raggedright Canonical resolved-transport decomposition and reconstruction of local-unit-aligned ordered semigroups}

\author[S. Jenei]{S\'andor Jenei}
\address{Institute of Mathematics and Informatics\\
Eszterh\'azy K\'aroly Catholic University\\
Hungary\\
Institute of Mathematics and Informatics\\
University of P\'ecs\\
Hungary}
\urladdr{https://jenei.ttk.pte.hu/home.html}
\email{jenei.sandor@uni-eszterhazy.hu, jenei@ttk.pte.hu}

\subjclass{06F05, 20M10, 20M30}
\keywords{ordered semigroup, local-unit-aligned semigroup, resolved transport, decomposition-reconstruction, $\tau$-multiplication-coherent decomposition, direct system, order recovery}

\begin{abstract}
This paper develops a canonical decomposition and reconstruction theory for local-unit-aligned ordered semigroups. Two coherentizations of the positive-idempotent skeleton are introduced. The finer one records the closure forced within local-unit blocks, while the multiplication-coherent quotient yields a join-semilattice of canonical blocks. Components are the fibers of these blocks.

For comparable blocks $A\le B$, each positive idempotent $q\in B$ defines a transport homomorphism $x\mapsto xq$ from the component over $A$ to that over $B$. Keeping all such maps gives a resolved-transport family, which replaces the single connecting map used in an ordinary direct system. These canonical data reconstruct multiplication without additional assumptions: two elements are transported to their join component, and their product is the least product of corresponding transported images. They also determine every comparison directed from a lower component to a higher one.

The full ambient order is recovered under any of three explicit order-recovery conditions. In particular, every totally ordered local-unit-aligned semigroup is completely reconstructed by its resolved-transport data. A complementary result recovers the order from componentwise order duality when a suitable component-preserving anti-automorphism is available.

When every block receiving a proper transition is lower-pointed, its least positive idempotent selects the pointwise least map in each resolved-transport family. If these receiving least idempotents are also skeleton-monotone, the selected maps satisfy the ordinary direct-system identities and recover multiplication by the usual single-map rule. They also define the associated directed-lexicographic relation, whose agreement with the ambient order is a separate order-recovery condition. Examples show both why the additional hypotheses are needed and why resolved transports cannot in general be replaced by ordinary transition maps.
\end{abstract}

\maketitle

\section{Introduction}

Decomposition methods in semigroup theory usually start from the same problem: one wants to replace a global multiplication by local pieces and by data that govern their interaction.
Classical examples include Green's relations and principal factors, Rees quotients and Rees matrix representations, and semilattice decompositions of regular or inverse semigroups; see, for example, \cite{Green1951,Rees1940,Clifford1941,CliffordPreston1961,Howie1995}.
In these theories the indexing object may come from ideals, from $\mathcal J$-classes, from idempotents, or from a semilattice of components.
The reconstruction data may then be sandwich matrices, connecting homomorphisms, or the operation of the indexing semilattice.
P\l{}onka's construction gives a universal-algebraic version of the same pattern: an algebra is assembled from components over a semilattice, and operations are evaluated after the arguments have been moved to the component selected by the semilattice operation \cite{Plonka1967}.

For ordered semigroups this problem has an additional order-theoretic side.
It is not enough to recover the multiplication; one also has to recover the ambient order.
Classical work on ordered semigroups includes Archimedean decompositions, ideal extensions, and ordinal-sum phenomena in naturally ordered or totally ordered commutative settings \cite{Clifford1954,Clifford1958,Saito1968,Saito1976I,Saito1976II,Saito1976III,Satyanarayana1979Arch,Gabovich1976,Hulin1976,KehayopuluTsingelis2003,KehayopuluTsingelis2006}.
In the finite commutative chain case, related structures have also been studied as tomonoids, through structural questions on totally ordered monoids, and through Rees coextensions and constructive representation procedures \cite{EvansEtAl2001,Whipple2005,Horcik2010,Vetterlein2015,Vetterlein2016Pos,Vetterlein2017Rep,PetrikVetterlein2014,PetrikVetterlein2016,PetrikVetterlein2017,PetrikVetterlein2019,Vetterlein2016Real}.
The present paper belongs to this decomposition tradition, but the invariant used here is different.
Instead of decomposing by ideals, Green classes, Archimedean equivalence, or externally prescribed construction data, we decompose by the behavior of local units.

The local-unit condition used below is deliberately intrinsic.
For an element $x$, consider the right and left local-unit sets
\[
\{z:xz=x\},
\qquad
\{z:zx=x\}.
\]
We say that the semigroup is local-unit-complete if these sets have greatest elements for every $x$.
It is local-unit-aligned if the two greatest local units coincide and their common value is positive.
This common value is denoted by $\tau(x)$.
Thus the map $\tau$ assigns to each element its canonical two-sided local unit, and its values lie in the ordered set $\PosId(\mathbf S)$ of positive idempotents.
This positive-idempotent skeleton is the raw index set from which the decompositions in this paper are extracted.

A first attempt would be to decompose $S$ into the exact $\tau$-layers $\tau^{-1}(p)$, one for each positive idempotent $p$.
This idempotent-by-idempotent decomposition is usually too fine.
Even in the finite totally ordered monoid setting, the useful canonical decomposition is obtained by merging positive idempotents into blocks until multiplication has a well-defined target block \cite{JeneiLUARepresentation}.
The central coarsening is the finest $\tau$-multiplication-coherent partition.
Its blocks form the coarse skeleton on which the later reconstruction is based.
If $B$ is such a block, the associated component is
\[
\layer{B}:=\{x\in S:\tau(x)\in B\}.
\]

In the finite chain situation this coarse skeleton is again a finite chain.
There the components can be organized as a direct system: for each lower component and higher component one has a transition homomorphism moving elements upward, and products are computed after the relevant factors have been moved to a common component.
The finite-chain rigidity theorem says more: the proper transition maps in that direct system are forced to be unit-constant \cite{JeneiLUARepresentation}.
This explains the finite picture, but it also points to the obstruction addressed here.
Without finiteness and totality, the positive-idempotent skeleton need not be a finite chain, and one distinguished transition map from a lower component to a higher one no longer carries enough information.

The present paper develops the corresponding general reconstruction theory for local-unit-aligned ordered semigroups.
There are two canonical coarsenings of the positive-idempotent skeleton.
The first is the finest $\tau$-stable partition.
It records the first closure forced by products taken inside a tentative local-unit block.
The second, and the one used for reconstruction, is the finest $\tau$-multiplication-coherent partition, denoted by $\Cm(\mathbf S)$.
It is obtained by making exactly the additional identifications needed so that the block of $\tau(xy)$ is determined by the blocks of $\tau(x)$ and $\tau(y)$.
Equivalently, the blocks of $\Cm(\mathbf S)$ form a quotient skeleton with a natural join operation.
This quotient skeleton plays, in the present ordered-semigroup setting, the role played by the index semilattice in strong semilattice and P\l{}onka-type constructions.

The replacement for a single transition map is a resolved family of transports.
Let $A,B\in\Cm(\mathbf S)$, and suppose that $A\le_{\rm m}B$ in the quotient skeleton.
For an element $x\in\layer{A}$, there is generally no canonical single image of $x$ in $\layer{B}$.
Instead, every positive idempotent $q$ in the receiving block $B$ gives an image
\[
\lambda^{A,B}_{q}(x)=xq
\qquad(q\in B).
\]
The family $(\lambda^{A,B}_{q})_{q\in B}$ is the resolved transport from $\layer{A}$ to $\layer{B}$.
It is not extra structure imposed on the semigroup; it is obtained from the original multiplication by keeping all target-idempotent actions visible.
Lower-pointedness of the receiving blocks selects a pointwise least map from each resolved-transport family. When the corresponding least positive idempotents are skeleton-monotone on the receiving part of the skeleton, these selected maps satisfy the ordinary direct-system identities.

The first reconstruction theorem is multiplicative.
For $x\in\layer{A}$ and $y\in\layer{B}$, the product $xy$ lies in the join component $\layer{A\vee B}$.
It is recovered by transporting both arguments to this join component through the same target idempotent and then taking the least of the resulting matching products:
\[
xy=
\min\{\lambda^{A,A\vee B}_{q}(x)\lambda^{B,A\vee B}_{q}(y):q\in A\vee B\}.
\]
Thus the formula has the familiar shape of a semilattice-indexed construction: the factors are first moved to the component selected by the index operation, and the product is then read there.
The new point is that the movement is resolved rather than single-valued.
Under the lower-pointed and skeleton-monotonicity hypotheses isolated below, the displayed formula collapses to the usual direct-system product rule.

The order requires a separate discussion.
The resolved-transport data always recover comparisons from a lower block to a higher block.
If $A\le_{\rm m}B$, $x\in\layer{A}$, and $y\in\layer{B}$, then
\[
 x\le y
 \quad\Longleftrightarrow\quad
 \lambda^{A,B}_{q}(x)\le y\text{ for some }q\in B.
\]
This is an intrinsic lower-to-higher test in the resolved system.
However, it does not by itself decide all cross-component comparisons.
In particular, a reverse comparison from a higher block to a lower block, or a comparison between incomparable blocks, requires additional order information.

The paper isolates three explicit order-recovery packages under which the resolved-transport system reconstructs the full ambient order.
They are skeleton orientation, skeleton-supported cross-block totality, and skeleton-supported strict upper-shadow reflection.
These are hypotheses on the cross-component order relative to the multiplication-derived skeleton and the transport maps.
They are not formal consequences of the resolved-transport data alone: a two-component example exhibits identical canonical quotient skeletons,
component ordered semigroups, and resolved transports, but different reverse cross-component comparisons.
Under any one of the three packages, the canonical resolved-transport system reconstructs the full ordered semigroup.
In particular, the cross-block-totality package applies to totally ordered semigroups.
The main text also identifies the implications among these order principles in the lower-pointed, skeleton-monotone collapse setting and isolates the extra endpoint rigidity supplied by totality together with skeleton orientation.

There is also a separate order-duality mechanism, still formulated at the ordered-semigroup level.
Suppose that the semigroup is skeleton-supported and carries a component-preserving order anti-automorphism $\nu$.
For $A<_{\rm m}B$, $x\in\layer{A}$, and $y\in\layer{B}$, the unresolved reverse comparison is transformed into
\[
y\le x
\quad\Longleftrightarrow\quad
\nu(x)\le\nu(y).
\]
Since $\nu$ preserves components, the comparison on the right is again a lower-to-higher comparison and is therefore decided by the resolved transports.
Proposition~\ref{prop:antiautomorphism-full-order-recovery} shows that the resolved-transport data, together with the componentwise action of $\nu$, reconstruct the full expanded ordered semigroup.
If the canonical quotient skeleton is a chain, skeleton-supportedness is automatic.

Balanced involutive residuated expansions are recorded in Remark~\ref{rem:involutive-order-recovery} as one specialization of this semigroup-level mechanism; the recovery argument itself uses only the ordered-semigroup structure and componentwise order duality.

The main contribution is therefore an internal decomposition and reconstruction theorem for the canonical data extracted from a given local-unit-aligned ordered semigroup.
Every such semigroup has the decomposition $\Lm(\mathbf S)$ induced by $\Cm(\mathbf S)$ and the canonical resolved-transport system $\mathfrak R(\mathbf S)$.
This system reconstructs multiplication and all lower-to-higher order comparisons without further assumptions.
It reconstructs the full ordered semigroup under any of the three explicit order-recovery packages described above.
A further full-order recovery mechanism is obtained by adjoining the componentwise action of a component-preserving order anti-automorphism.
As a consequence, $\mathfrak R(\mathbf S)$ recovers every local-unit-aligned totally ordered semigroup.
The finite-chain picture is retained in two respects: there are components, and there is transition-based reconstruction.
What changes is the shape of the canonical quotient skeleton and the form of
the connecting data: the canonical $\tau$-multiplication-coherent quotient
of the positive-idempotent skeleton, which is a finite chain in the finite totally ordered case, becomes a join-semilattice in general, while rigid single transition maps are replaced by resolved families of transports.

The present paper does not axiomatize resolved-transport systems independently of an underlying ordered semigroup and does not prove a converse realization theorem for arbitrary such systems.
Its reconstruction statements concern the canonical system $\mathfrak R(\mathbf S)$ extracted from a given $\mathbf S$.
The systematic arbitrary-chain separation family is deferred to the appendix, while the short examples and implication results needed to interpret the principal order-recovery hypotheses are placed at their point of use in the main body.

The paper is organized as follows.
Section~\ref{sec:preliminaries} recalls the local-unit-aligned setting and the elementary facts about positive idempotents.
Sections~\ref{sec:tau-stable} and~\ref{sec:tau-multiplication-coherent} construct the $\tau$-stable and $\tau$-multiplication-coherent partitions.
Section~\ref{sec:component-semigroups} proves that the induced components are again local-unit-aligned ordered semigroups.
Section~\ref{sec:quotient-skeleton-stage} develops quotient skeletons and resolved transports for arbitrary coherent partitions and proves product and lower-to-higher order recovery.
Section~\ref{sec:canonical-resolved-transport} specializes these constructions to $\Cm(\mathbf S)$, states the three explicit order-recovery packages, establishes their principal boundary phenomena, develops the semigroup-level recovery mechanism supplied by a component-preserving order anti-automorphism, and records the balanced involutive residuated specialization.
Section~\ref{sec:least-element-collapse} separates the least-target selection supplied by lower-pointedness from the skeleton-monotonicity needed for direct-system coherence, and determines the relations among the associated order-recovery principles.
Section~\ref{sect:canonical-resolved-decomposition} states the canonical reconstruction theorem and its principal order-recovery corollaries.
Appendix~\ref{sec:dependence-order-recovery-principles} gives separation models over arbitrary nontrivial chain skeletons.

\section{Preliminaries}
\label{sec:preliminaries}

By a \emph{chain} we mean a nonempty totally ordered set.
Throughout the paper, an \emph{ordered semigroup} means a partially ordered semigroup whose multiplication is isotone in both coordinates.
Thus
\[
\mathbf S=\langle S,\le,\cdot\rangle
\]
denotes such an ordered semigroup: $\langle S,\le\rangle$ is a partially ordered set, $\langle S,\cdot\rangle$ is a semigroup, and multiplication is isotone in both arguments,
\[
 x\le y \Longrightarrow xz\le yz\quad\text{and}\quad zx\le zy
 \qquad(z\in S).
\]
A \emph{totally ordered semigroup} is an ordered semigroup whose order is a chain.
No finiteness assumption and no global identity element are assumed.
Whenever totality, a maximum, or a minimum is used below, its existence is part of the relevant hypothesis.

\begin{definition}\label{def:local-unit-aligned}
Let $\mathbf S$ be an ordered semigroup.

\begin{enumerate}
\item An element $p\in S$ is \emph{idempotent} if $p^2=p$.

\item An element $p\in S$ is called \emph{positive} if it is order-inflationary on both sides, that is,
\[
 x\le xp\quad\text{and}\quad x\le px\qquad(x\in S).
\]
A positive idempotent is an element that is both positive and idempotent.
We write $\PosId(\mathbf S)$ for the set of positive idempotents of $\mathbf S$.
If $\mathbf S$ has a two-sided identity element $e$, positivity is equivalent to the usual condition $p\ge e$.

\item The ordered semigroup $\mathbf S$ is called \emph{local-unit-complete} if, for every $x\in S$, the two local-unit sets
\[
\{z\in S:xz=x\},
\qquad
\{z\in S:zx=x\}
\]
have greatest elements.
For a local-unit-complete ordered semigroup, define
\[
\tau_r(x):=\max\{z\in S:xz=x\},
\qquad
\tau_{\ell}(x):=\max\{z\in S:zx=x\}.
\]

\item A local-unit-complete ordered semigroup $\mathbf S$ is called \emph{local-unit-aligned} if for every $x\in S$ the two greatest local units coincide, $\tau_r(x)=\tau_{\ell}(x)$, and their common value is positive.
In this case we write $\tau(x):=\tau_r(x)=\tau_{\ell}(x)$ and call $\tau$ the \emph{local-unit map} of $\mathbf S$.
\end{enumerate}
\end{definition}

\begin{remark}[Relation with cut formulations and residuation]
\label{rem:relation-to-finite-monoid}
For an ordered monoid $\langle S,\le,\cdot,e\rangle$, positivity in the present sense is equivalent to being at least $e$.
For $x\in S$, consider the one-sided inequality cuts
\[
C_r(x):=\{z\in S:xz\le x\},
\qquad
C_{\ell}(x):=\{z\in S:zx\le x\}.
\]
Both cuts contain $e$.

Suppose that $C_r(x)$ has a greatest element $c_r(x)$.
Since $e\le c_r(x)$, the element $c_r(x)$ is positive, and hence
\[
x\le xc_r(x).
\]
The defining property of the cut gives the reverse inequality, so
\[
xc_r(x)=x.
\]
Moreover, every right local unit of $x$ belongs to $C_r(x)$.
Hence $c_r(x)$ is the greatest right local unit of $x$.
The analogous argument shows that, whenever $C_{\ell}(x)$ has a greatest element $c_{\ell}(x)$, this element is the greatest left local unit of $x$.

Consequently, whenever the relevant inequality cuts have greatest elements, the cut and equality-local-unit formulations coincide:
\[
\tau_r(x)=\max C_r(x),
\qquad
\tau_{\ell}(x)=\max C_{\ell}(x).
\]
Existence of the greatest equality local units alone does not, in an arbitrary partially ordered monoid, imply that the larger inequality cuts have greatest elements.

In a finite totally ordered monoid, however, both inequality cuts are nonempty finite chains and therefore have greatest elements.
Thus the equality-local-unit formulation adopted here agrees on that class with the cut formulation used in the finite-chain representation theorem of \cite{JeneiLUARepresentation}.

There is a second setting in which the cut maxima are supplied intrinsically.
If $\mathbf S$ is a residuated po-semigroup, then residuation gives
\[
x\backslash x=\max C_r(x),
\qquad
x/x=\max C_{\ell}(x).
\]
Suppose, in addition, that $\mathbf S$ is balanced, in the sense that
\[
x\backslash x=x/x
\]
and this common element is positive for every $x$.
Writing
\[
p:=x\backslash x=x/x,
\]
residuation gives $xp\le x$ and $px\le x$, while positivity gives the reverse inequalities.
Hence
\[
xp=x=px.
\]
Since $p$ is the greatest element of both inequality cuts, it also dominates every right or left local unit of $x$.
The ordered-semigroup reduct is therefore local-unit-aligned, with
\[
\tau(x)=x\backslash x=x/x.
\]

Thus the cut formulation from the finite totally ordered setting and the self-residual formulation from the balanced residuated setting both specialize to the equality-local-unit formulation used throughout the present paper.
For general ordered semigroups, the equality formulation is preferable because there need be no ambient identity ensuring positivity and no residuation ensuring the existence of the cut maxima.
\end{remark}

Unless explicitly stated otherwise, all $\tau$-based notation below is used only for local-unit-aligned ordered semigroups and is formed with respect to the local-unit map of the semigroup under discussion.

\begin{definition}
The ordered set
\[
 \Sk(\mathbf S):=\langle \PosId(\mathbf S),\le\!\upharpoonright_{\PosId(\mathbf S)}\rangle
\]
is called the \emph{skeleton} of $\mathbf S$.
We say that $\mathbf S$ is \emph{positive-idempotent-linear} if $\Sk(\mathbf S)$ is a chain.
Every totally ordered semigroup is positive-idempotent-linear, but the converse need not hold.
\end{definition}

\begin{lemma}\label{lem:basic_tau}
Let $\mathbf S$ be local-unit-aligned and let $x\in S$.
Then:
\begin{enumerate}
\item\label{item:positive} $\tau(x)$ is positive;
\item\label{item:local-unit-equalities} $x\tau(x)=x=\tau(x)x$;
\item\label{item:tau-idempotent} $\tau(x)$ is idempotent, and hence $\tau(x)\in\PosId(\mathbf S)$;
\item\label{item:tau-fixes-idempotents} if $u\in\PosId(\mathbf S)$, then $\tau(u)=u$;
\item\label{item:below-tau-gives-unit} if $u\in\PosId(\mathbf S)$ and $u\le \tau(x)$, then $xu=x=ux$.
\end{enumerate}
\end{lemma}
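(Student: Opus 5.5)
The plan is to verify the five items in order, each directly from Definition~\ref{def:local-unit-aligned}; the only real content is in items (3) and (5). Items (1) and (2) are immediate. Local-unit-alignment says $\tau(x)=\tau_r(x)=\tau_\ell(x)$ is positive. Since $\tau_r(x)$ is the maximum of $\{z:xz=x\}$, it belongs to that set, so $x\tau(x)=x$. In the same way, $\tau_\ell(x)\in\{z:zx=x\}$ gives $\tau(x)x=x$.

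For item (3), write $p=\tau(x)$. First I will show that $p^2$ is again a right local unit of $x$: indeed $xp^2=(xp)p=xp=x$. Maximality of $p=\tau_r(x)$ then gives $p^2\le p$. For the reverse inequality, apply positivity of $p$ to the element $p$ itself, which gives $p\le pp=p^2$. Antisymmetry now yields $p^2=p$. I expect this to be the main point, modest as it is: the upper bound comes from maximality among local units and the lower bound from positivity applied to $p$.

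For item (4), let $u\in\PosId(\mathbf S)$. Idempotency gives $uu=u$, so $u$ is a right local unit of itself, and therefore $u\le\tau_r(u)=\tau(u)$. Conversely, put $t=\tau(u)$, so that $ut=u$ by item (2). Positivity of $u$, applied to the element $t$, gives $t\le ut=u$. Hence $\tau(u)=u$.

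For item (5), let $u\in\PosId(\mathbf S)$ with $u\le\tau(x)$. Isotonicity of multiplication gives $xu\le x\tau(x)=x$, and positivity of $u$ gives $x\le xu$; so $xu=x$. The left-hand version is symmetric: $ux\le\tau(x)x=x$ by isotonicity and $x\le ux$ by positivity, so $ux=x$. Idempotency of $u$ is not even needed in this step.
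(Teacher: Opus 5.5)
Your proof is correct and follows essentially the same route as the paper: in (3), maximality among local units gives $\tau(x)^2\le\tau(x)$ and positivity gives the reverse inequality; in (4), $u\le\tau(u)$ is combined with positivity of $u$ applied to $\tau(u)$; and in (5), isotonicity and positivity squeeze $xu$ and $ux$ to $x$. The only difference is cosmetic: you work with right local units alone in (3) and (4), whereas the paper records both sides, and maximality in a single one-sided set already suffices.
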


\begin{proof}
Since $\mathbf S$ is local-unit-aligned, the greatest right and left local units of $x$ coincide, and their common value $\tau(x)$ is positive by definition.
This proves item~\eqref{item:positive}.
By the defining local-unit equalities, $x\tau(x)=x=\tau(x)x$, proving item~\eqref{item:local-unit-equalities}.

For idempotence, these equalities give $x\tau(x)^2=(x\tau(x))\tau(x)=x\tau(x)=x$, and similarly $\tau(x)^2x=\tau(x)(\tau(x)x)=\tau(x)x=x$.
Thus $\tau(x)^2$ is both a right and a left local unit of $x$, and maximality of $\tau(x)$ gives $\tau(x)^2\le\tau(x)$.
Applying positivity of $\tau(x)$ to the element $\tau(x)$ itself gives $\tau(x)\le\tau(x)^2$.
Hence $\tau(x)^2=\tau(x)$, proving item~\eqref{item:tau-idempotent}.

If $u\in\PosId(\mathbf S)$, then $u^2=u$, so $u$ is both a right and a left local unit of itself, hence $u\le\tau(u)$.
Since $\tau(u)$ is a left local unit of $u$, we have $\tau(u)u=u$.
Since $u$ is positive, $\tau(u)\le \tau(u)u=u$.
Thus $\tau(u)\le u$, and therefore $\tau(u)=u$.

Finally, if $u\le\tau(x)$, then isotonicity gives $xu\le x\tau(x)=x$ and $ux\le\tau(x)x=x$, while positivity of $u$ gives $x\le xu$ and $x\le ux$.
Thus $xu=x=ux$.
\end{proof}

\begin{proposition}[Centrality of positive idempotents]\label{prop:positive-idempotents-central}
Let $\mathbf S$ be local-unit-aligned.
Then every positive idempotent of $\mathbf S$ is central.
Equivalently, for every $p\in\PosId(\mathbf S)$ and every $x\in S$, one has
\[
px=xp.
\]
\end{proposition}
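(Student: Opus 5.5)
The plan is to compare both $px$ and $xp$ with the two-sided product $pxp$, and to show that each of them equals $pxp$. The only tools needed are the alignment $\tau_r=\tau_\ell$ and item~\eqref{item:below-tau-gives-unit} of Lemma~\ref{lem:basic_tau}.

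Fix $p\in\PosId(\mathbf S)$ and $x\in S$, and put $y:=px$.

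\begin{enumerate}
\item Since $p$ is idempotent, $p\,y=p^2x=px=y$, so $p$ is a left local unit of $y$.
\item By maximality, $p\le\tau_\ell(y)$.
\item Local-unit alignment gives $\tau_\ell(y)=\tau_r(y)=\tau(y)$, so $p\le\tau(y)$.
\item Lemma~\ref{lem:basic_tau}\eqref{item:below-tau-gives-unit} applies to the positive idempotent $p$ and yields $yp=y$, that is, $pxp=px$.
\end{enumerate}

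Symmetrically, put $z:=xp$.

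\begin{enumerate}
\item Since $p$ is idempotent, $zp=xp^2=z$, so $p$ is a right local unit of $z$.
\item By maximality, $p\le\tau_r(z)=\tau(z)$.
\item Lemma~\ref{lem:basic_tau}\eqref{item:below-tau-gives-unit} then gives $pz=z$, that is, $pxp=xp$.
\end{enumerate}

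Combining the two computations gives $px=pxp=xp$, which is the claim. By associativity, $pxp$ is unambiguous.

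The only delicate step is the transfer of a one-sided local unit to the other side. A left local unit of $px$ need not, a priori, act as a right local unit. This is exactly where the hypothesis $\tau_r=\tau_\ell$ is used, together with positivity, which is already encapsulated in Lemma~\ref{lem:basic_tau}\eqref{item:below-tau-gives-unit}. Everything else is immediate from idempotence and associativity.
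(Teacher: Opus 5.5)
Your proof is correct and follows essentially the same route as the paper's. Both show that $p$ is a left local unit of $px$ and a right local unit of $xp$, and use alignment to get $p\le\tau(px)$ and $p\le\tau(xp)$. Both then apply Lemma~\ref{lem:basic_tau}\eqref{item:below-tau-gives-unit} to obtain the opposite-sided unit laws and conclude $px=pxp=xp$.
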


\begin{proof}
Fix $p\in\PosId(\mathbf S)$ and $x\in S$.
Since $p(px)=px$, the element $p$ is a left local unit of $px$, hence $p\le\tau(px)$.
Lemma~\ref{lem:basic_tau}\eqref{item:below-tau-gives-unit} applied to the element $px$ gives $(px)p=px$.
Similarly, since $(xp)p=xp$, the element $p$ is a right local unit of $xp$, hence $p\le\tau(xp)$.
Applying Lemma~\ref{lem:basic_tau}\eqref{item:below-tau-gives-unit} to the element $xp$ gives $p(xp)=xp$.
Both displayed products are equal to $pxp$, and therefore $px=xp$.
\end{proof}

\begin{lemma}\label{lem:idempotents-max}
Let $\mathbf S$ be local-unit-aligned, and let $p,q\in\PosId(\mathbf S)$.
Then $pq=qp\in\PosId(\mathbf S)$, and $pq$ is the join $p\vee q$ in the ordered set $\Sk(\mathbf S)$.
In particular, if $p\le q$, then $pq=qp=q$.
\end{lemma}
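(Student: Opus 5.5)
The proof has three parts: commutativity from centrality, membership of $pq$ in $\PosId(\mathbf S)$, and the join property in $\Sk(\mathbf S)$. The special case $p\le q$ then follows.

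First, $pq=qp$ is immediate from Proposition~\ref{prop:positive-idempotents-central}, since $p$ is central.

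Next, $pq$ is idempotent, because centrality lets us rearrange
\[
(pq)(pq)=p(qp)q=p(pq)q=p^2q^2=pq.
\]
It is also positive. For any $x$, positivity of $q$ and then of $p$ gives $x\le xp\le (xp)q=x(pq)$. On the left, $x\le qx\le p(qx)=(pq)x$. Each step uses isotonicity of multiplication. Hence $pq\in\PosId(\mathbf S)$.

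For the join property, $pq$ is first an upper bound of $p$ and $q$. Positivity of $q$ applied to $p$ gives $p\le pq$. Positivity of $p$ applied to $q$ gives $q\le pq$, using $pq=qp$.

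It remains to show $pq$ is least among upper bounds in $\Sk(\mathbf S)$. Let $r\in\PosId(\mathbf S)$ with $p\le r$ and $q\le r$. By Lemma~\ref{lem:basic_tau}\eqref{item:tau-fixes-idempotents}, $\tau(r)=r$, so $p\le\tau(r)$. Lemma~\ref{lem:basic_tau}\eqref{item:below-tau-gives-unit} then gives $rp=r$, and likewise $rq=r$. Hence $r(pq)=(rp)q=rq=r$, so $pq$ is a left local unit of $r$.

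This step carries the only real content: turning an order relation into an absorption identity so that positivity can be applied. To finish it, note that positivity of $r$ gives $pq\le r(pq)$. Combined with $r(pq)=r$, this yields $pq\le r$. Therefore $pq=p\vee q$ in $\Sk(\mathbf S)$.

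Finally, suppose $p\le q$. Then the join $p\vee q$ is $q$, so $pq=qp=q$. This also follows directly from Lemma~\ref{lem:basic_tau}\eqref{item:below-tau-gives-unit} applied to $x=q$, since $\tau(q)=q$ and $p\le q$.
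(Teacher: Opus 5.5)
Your proof is correct and follows essentially the same route as the paper: commutativity from Proposition~\ref{prop:positive-idempotents-central}, direct computations for idempotence and two-sided positivity, positivity for the upper-bound property, and an absorption argument for leastness. The only difference is in that last step: you derive $rp=r=rq$ from Lemma~\ref{lem:basic_tau}\eqref{item:below-tau-gives-unit} and then apply positivity of $r$ to $pq$, while the paper uses isotonicity to get $pq\le rq$ and proves $rq=r$ inline, which rests on the same absorption computation.
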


\begin{proof}
The equality $pq=qp$ follows from Proposition~\ref{prop:positive-idempotents-central}.
Hence $(pq)^2=pqpq=p^2q^2=pq$, so $pq$ is idempotent.
It is positive: for any $x\in S$, positivity of $q$, then positivity of $p$, gives $x\le xq\le (xq)p=xpq$; similarly $x\le qx\le p(qx)=pqx$.
Thus $pq\in\PosId(\mathbf S)$.
Moreover, positivity gives $p\le pq$ and $q\le pq$, so $pq$ is an upper bound of $p$ and $q$ inside $\Sk(\mathbf S)$.
If $r\in\PosId(\mathbf S)$ is another upper bound, then $p\le r$ and $q\le r$.
By isotonicity, $pq\le rq$, and the special comparable case applied to $q\le r$ gives $rq=r$: indeed positivity gives $r\le rq$, while isotonicity gives $rq\le rr=r$.
Thus $pq\le r$, proving that $pq=p\vee q$.
The final assertion is the join identity when $p\le q$.
\end{proof}

\begin{proposition}\label{prop:tau-of-product}
Let $\mathbf S$ be local-unit-aligned.
Then for all $x,y\in S$,
\[
\tau(x)\le \tau(xy),\qquad \tau(y)\le \tau(xy),
\]
and
\[
\tau(x)\tau(y)=\tau(x)\vee\tau(y)\le\tau(xy).
\]
\end{proposition}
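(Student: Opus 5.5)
We have to show that $\tau(x)$ and $\tau(y)$ are both at most $\tau(xy)$. Once that is done, the rest is formal. Lemma~\ref{lem:idempotents-max} says $\tau(x)\tau(y)$ is the join $\tau(x)\vee\tau(y)$ in $\Sk(\mathbf S)$. Since $\tau(xy)$ lies in $\PosId(\mathbf S)$ by Lemma~\ref{lem:basic_tau}\eqref{item:tau-idempotent}, it is an upper bound of $\tau(x)$ and $\tau(y)$ inside the skeleton, and so it dominates their join.

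For $\tau(x)\le\tau(xy)$, we show that $p:=\tau(x)$ is a local unit of $xy$.

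\begin{itemize}
\item \emph{Left unit.} Lemma~\ref{lem:basic_tau}\eqref{item:local-unit-equalities} gives $px=x$, and associativity gives $p(xy)=(px)y=xy$. So $p$ is a left local unit of $xy$, and maximality of $\tau_\ell(xy)=\tau(xy)$ yields $p\le\tau(xy)$.
\item \emph{Alternative via centrality.} If we want two-sided unit information, Proposition~\ref{prop:positive-idempotents-central} gives $(xy)p=p(xy)=xy$, so $p$ is also a right local unit. Only one side is needed, because $\tau_\ell=\tau_r=\tau$ by local-unit alignment.
\end{itemize}

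Symmetrically, $q:=\tau(y)$ satisfies $(xy)q=x(yq)=xy$, so $q$ is a right local unit of $xy$, and maximality of $\tau_r(xy)=\tau(xy)$ gives $q\le\tau(xy)$.

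There is no real obstacle here. The only point needing care is using the correct side: $\tau(x)$ acts on the left and $\tau(y)$ on the right, so that associativity applies directly. We also need local-unit alignment to identify the one-sided maxima with $\tau(xy)$.

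For the final inequality, the join is taken in $\Sk(\mathbf S)$, not in $S$. This is harmless, because $\tau(xy)$ is itself a positive idempotent.
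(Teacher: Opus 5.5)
Your proof is correct and follows essentially the same route as the paper: $\tau(x)$ is a left local unit and $\tau(y)$ a right local unit of $xy$ by associativity, and then Lemma~\ref{lem:basic_tau}\eqref{item:tau-idempotent} together with Lemma~\ref{lem:idempotents-max} bounds the join by $\tau(xy)$. The centrality aside is not needed, and it is not circular, since Proposition~\ref{prop:positive-idempotents-central} is proved earlier from Lemma~\ref{lem:basic_tau} alone.
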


\begin{proof}
Since $y\tau(y)=y$, we have $xy\tau(y)=xy$, so $\tau(y)$ is a right local unit of $xy$ and therefore $\tau(y)\le\tau(xy)$.
Likewise, since $\tau(x)x=x$, we have $\tau(x)xy=xy$, so $\tau(x)$ is a left local unit of $xy$ and therefore $\tau(x)\le\tau(xy)$.
By Lemma~\ref{lem:basic_tau}\eqref{item:tau-idempotent}, $\tau(xy)$ is a positive idempotent upper bound of $\tau(x)$ and $\tau(y)$.
By Lemma~\ref{lem:idempotents-max}, the product $\tau(x)\tau(y)$ is the join of $\tau(x)$ and $\tau(y)$ in $\Sk(\mathbf S)$, and hence it is below $\tau(xy)$.
\end{proof}

\begin{definition}
Let $\mathbf S$ be local-unit-aligned.
\begin{enumerate}
\item For $u\in\PosId(\mathbf S)$, the \emph{$u$-layer} is $\layer{u}:=\{x\in S:\tau(x)=u\}$.
\item For $A\subseteq\PosId(\mathbf S)$, put $\layer{A}:=\tau^{-1}(A)=\{x\in S:\tau(x)\in A\}$.
\item For $A,B\subseteq\PosId(\mathbf S)$, define their \emph{$\tau$-saturated product} by
\[
 \tprod{A}{B}:=\tau(\layer{A}\cdot\layer{B})
 =\{\tau(xy):x\in\layer{A},\ y\in\layer{B}\}.
\]
\end{enumerate}
\end{definition}

\begin{lemma}\label{lem:tprod-monotone}
If $A\subseteq A'$ and $B\subseteq B'$, then $\tprod{A}{B}\subseteq \tprod{A'}{B'}$.
In particular, $A\subseteq A'\Longrightarrow \tprod{A}{A}\subseteq \tprod{A'}{A'}$.
\end{lemma}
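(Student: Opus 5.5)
The plan is to unwind the definitions directly; no earlier result beyond the definition of $\layer{A}$ and of $\tprod{A}{B}$ is needed. The first step is to observe that $\layer{\cdot}$ is monotone with respect to inclusion. If $A\subseteq A'$ and $x\in\layer{A}$, then $\tau(x)\in A\subseteq A'$, so $x\in\layer{A'}$. Hence $\layer{A}\subseteq\layer{A'}$, and likewise $\layer{B}\subseteq\layer{B'}$.

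Next, let $u\in\tprod{A}{B}$ be arbitrary. By definition $u=\tau(xy)$ for some $x\in\layer{A}$ and $y\in\layer{B}$. By the first step, $x\in\layer{A'}$ and $y\in\layer{B'}$. Therefore $u=\tau(xy)$ is of the form required for membership in $\tprod{A'}{B'}$, which gives the inclusion $\tprod{A}{B}\subseteq\tprod{A'}{B'}$.

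The ``in particular'' clause is the special case $B=A$, $B'=A'$ of the general statement. There is no genuine obstacle here: the only point to check is that $\layer{\cdot}$ is defined as a preimage $\tau^{-1}(\cdot)$, so it preserves inclusions. The image of the product set under $\tau$ then preserves them as well, since both forming products of sets and taking images are monotone operations.
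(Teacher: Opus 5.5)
Your proof is correct and follows the same approach as the paper. You use that $\layer{\cdot}=\tau^{-1}(\cdot)$ preserves inclusions, so the setwise products satisfy $\layer{A}\cdot\layer{B}\subseteq\layer{A'}\cdot\layer{B'}$, and taking images under $\tau$ gives the result; the special case $B=A$, $B'=A'$ gives the second claim.
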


\begin{proof}
The inclusions $\layer{A}\subseteq\layer{A'}$ and $\layer{B}\subseteq\layer{B'}$ imply $\layer{A}\cdot\layer{B}\subseteq\layer{A'}\cdot\layer{B'}$, and applying $\tau$ gives the result.
\end{proof}

\begin{remark}\label{rem:A-subset-tprodAA}
For every $A\subseteq\PosId(\mathbf S)$, $A\subseteq\tprod{A}{A}$.
Indeed, if $u\in A$, then $u\in\layer{A}$ by Lemma~\ref{lem:basic_tau}\eqref{item:tau-fixes-idempotents}, and $u=\tau(u^2)\in\tprod{A}{A}$.
\end{remark}

\section{\texorpdfstring{$\tau$-stable partitions and the $\tau$-cohesive decomposition}{tau-stable partitions and the tau-cohesive decomposition}}
\label{sec:tau-stable}

We use the standard refinement order on partitions: a partition $\mathcal P$ of a set $X$ \emph{refines} a partition $\mathcal Q$ of $X$ if every block of $\mathcal P$ is contained in a block of $\mathcal Q$.

The role of this section is preparatory.
It records the first closure forced on local-unit blocks before the stronger multiplication-coherence condition is imposed.
As will follow from Theorem~\ref{thm:tau-refines-mult}, $\tau$-multiplication-coherent blocks are unions of $\tau$-stable blocks.
Thus the decomposition induced by the finest $\tau$-stable partition, later denoted by $\Ltau(\mathbf S)$ and called the $\tau$-cohesive decomposition, is a first structural approximation to the later multiplicative coherentization.
It also provides a natural starting point for a structural analysis of $\tau$-multiplication-cohesive ordered semigroups.

\begin{definition}
Let $\mathbf S$ be local-unit-aligned.

\begin{enumerate}
\item A nonempty set $A\subseteq\PosId(\mathbf S)$ is called \emph{$\tau$-stable} if $\tprod{A}{A}=A$.
\item A partition $\mathcal P$ of $\PosId(\mathbf S)$ is called \emph{$\tau$-stable} if every block of $\mathcal P$ is $\tau$-stable.
\end{enumerate}
\end{definition}

Thus a $\tau$-stable block is meant to isolate a collection of local units whose associated layers are closed under taking the local unit of an internal product.

\begin{definition}
Let $\mathcal P$ be a partition of $\PosId(\mathbf S)$.
Define a graph on the set of blocks of $\mathcal P$ by joining two blocks $A$ and $B$ whenever $\tprod{A}{A}\cap \tprod{B}{B}\neq\varnothing$.
We denote by $\CCtau(\mathcal P)$ the partition obtained by merging precisely those blocks that lie in the same connected component of this graph.
\end{definition}

\begin{lemma}\label{lem:CCtau-monotone}
If a partition $\mathcal P$ refines a partition $\mathcal Q$, then $\CCtau(\mathcal P)$ refines $\CCtau(\mathcal Q)$.
\end{lemma}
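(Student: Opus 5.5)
The plan is to show that every block of $\CCtau(\mathcal P)$ is contained in a block of $\CCtau(\mathcal Q)$. Equivalently, any two blocks of $\mathcal P$ that are merged by $\CCtau$ must lie in blocks of $\mathcal Q$ that are merged by $\CCtau$. Since $\CCtau(\mathcal P)$ merges exactly the connected components of the graph on blocks of $\mathcal P$, it suffices to treat single edges. Indeed, a path $A_0,A_1,\dots,A_n$ in the $\mathcal P$-graph then yields a walk in the $\mathcal Q$-graph, and transitivity of the connected-component relation finishes the argument.

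To set this up, for a block $A$ of $\mathcal P$ let $\hat A$ denote the unique block of $\mathcal Q$ containing $A$; this block exists because $\mathcal P$ refines $\mathcal Q$. Suppose $A$ and $B$ are adjacent in the $\mathcal P$-graph, so that $\tprod{A}{A}\cap\tprod{B}{B}\neq\varnothing$. Since $A\subseteq\hat A$ and $B\subseteq\hat B$, Lemma~\ref{lem:tprod-monotone} gives $\tprod{A}{A}\subseteq\tprod{\hat A}{\hat A}$ and $\tprod{B}{B}\subseteq\tprod{\hat B}{\hat B}$. Hence $\tprod{\hat A}{\hat A}\cap\tprod{\hat B}{\hat B}\neq\varnothing$. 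There are two cases. If $\hat A=\hat B$, the two blocks trivially lie in the same component. Otherwise $\hat A$ and $\hat B$ are adjacent in the $\mathcal Q$-graph. In either case $\hat A$ and $\hat B$ lie in the same connected component of the $\mathcal Q$-graph.

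Now let $C$ be a block of $\CCtau(\mathcal P)$. It is the union of the $\mathcal P$-blocks in some connected component of the $\mathcal P$-graph. By the edge argument, all the corresponding $\hat A$ lie in one connected component of the $\mathcal Q$-graph. The union of that component is a single block $D$ of $\CCtau(\mathcal Q)$, and then $C=\bigcup A\subseteq\bigcup\hat A\subseteq D$.

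No step is a real obstacle; the argument is the monotonicity of $\tprod{\cdot}{\cdot}$ pushed through graph connectivity. The one point requiring care is the degenerate case $\hat A=\hat B$, where the $\mathcal Q$-graph has no edge but no edge is needed.
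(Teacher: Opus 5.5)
Your proof is correct and follows the paper's argument essentially verbatim: monotonicity of the $\tau$-saturated product (Lemma~\ref{lem:tprod-monotone}) sends each edge of the $\mathcal P$-graph either to a single $\mathcal Q$-block or to an edge of the $\mathcal Q$-graph, so connected components map into connected components. Your final paragraph spells out the resulting block containment, which the paper states in a single closing sentence.
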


\begin{proof}
Let $A,B$ be blocks of $\mathcal P$ with $\tprod{A}{A}\cap \tprod{B}{B}\neq\varnothing$.
Let $A',B'$ be the blocks of $\mathcal Q$ containing $A$ and $B$, respectively.
By Lemma~\ref{lem:tprod-monotone}, $\tprod{A}{A}\subseteq \tprod{A'}{A'}$ and $\tprod{B}{B}\subseteq \tprod{B'}{B'}$.
Hence $\tprod{A'}{A'}\cap \tprod{B'}{B'}\neq\varnothing$.
If $A'=B'$, then the two image blocks already coincide.
If $A'\neq B'$, the displayed nonempty intersection shows that $A'$ and $B'$ are joined by an edge in the graph of $\mathcal Q$.
Thus every edge in the graph of $\mathcal P$ maps either to a single vertex or to an edge in the graph of $\mathcal Q$.
Therefore connected components for $\mathcal P$ are contained in connected components for $\mathcal Q$, which exactly says that $\CCtau(\mathcal P)$ refines $\CCtau(\mathcal Q)$.
\end{proof}

\begin{proposition}\label{prop:CCtau-fixed-points}
For a partition $\mathcal P$ of $\PosId(\mathbf S)$, the following are equivalent:
\begin{enumerate}
\item\label{item:CCtau-fixed} $\CCtau(\mathcal P)=\mathcal P$;
\item\label{item:CCtau-stable} $\mathcal P$ is $\tau$-stable.
\end{enumerate}
\end{proposition}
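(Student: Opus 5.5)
The plan is to prove both implications directly from the definition of the graph underlying $\CCtau$. Two earlier facts do most of the work. First, Remark~\ref{rem:A-subset-tprodAA} gives $A\subseteq\tprod{A}{A}$ for every block $A$. Second, Lemma~\ref{lem:basic_tau}\eqref{item:tau-idempotent} shows that every $\tau$-value is a positive idempotent, so $\tprod{A}{A}\subseteq\PosId(\mathbf S)$; hence every element of $\tprod{A}{A}$ lies in some block of $\mathcal P$. Throughout, $\CCtau(\mathcal P)=\mathcal P$ will be read as saying that every connected component of the block graph is a single block. Equivalently, no two distinct blocks $A\neq B$ are joined by an edge.

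For \eqref{item:CCtau-stable}$\Rightarrow$\eqref{item:CCtau-fixed}, assume every block is $\tau$-stable. Take distinct blocks $A\neq B$ of $\mathcal P$. Then
\[
\tprod{A}{A}\cap\tprod{B}{B}=A\cap B=\varnothing,
\]
because the blocks of a partition are disjoint. So the graph has no edges between distinct blocks, every connected component is a singleton, and $\CCtau(\mathcal P)=\mathcal P$.

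For \eqref{item:CCtau-fixed}$\Rightarrow$\eqref{item:CCtau-stable}, assume $\CCtau(\mathcal P)=\mathcal P$, so distinct blocks are never adjacent. Fix a block $A$; it is nonempty because it is a block of a partition. The inclusion $A\subseteq\tprod{A}{A}$ holds by Remark~\ref{rem:A-subset-tprodAA}, so only the reverse inclusion remains. Let $u\in\tprod{A}{A}$. Then $u\in\PosId(\mathbf S)$, so $u$ lies in some block $B$ of $\mathcal P$. Applying Remark~\ref{rem:A-subset-tprodAA} to $B$ gives $u\in B\subseteq\tprod{B}{B}$, hence $u\in\tprod{A}{A}\cap\tprod{B}{B}$. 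If $B\neq A$, this nonempty intersection would be an edge between distinct blocks, which is excluded. Therefore $B=A$ and $u\in A$. This gives $\tprod{A}{A}=A$, so $A$ is $\tau$-stable.

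I do not expect a real obstacle. The one point needing care is the reverse inclusion $\tprod{A}{A}\subseteq A$: we must first know that each element of $\tprod{A}{A}$ belongs to some block, which is why Lemma~\ref{lem:basic_tau}\eqref{item:tau-idempotent} is needed. Only then can Remark~\ref{rem:A-subset-tprodAA}, applied to that other block, produce the forbidden edge.
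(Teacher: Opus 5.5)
Your proposal is correct and follows the paper's proof essentially step for step. The forward direction uses Remark~\ref{rem:A-subset-tprodAA} on the block containing $u\in\tprod{A}{A}$ to produce a forbidden edge. The converse observes that $\tprod{A}{A}\cap\tprod{B}{B}=A\cap B=\varnothing$. Your explicit appeal to Lemma~\ref{lem:basic_tau}\eqref{item:tau-idempotent}, to guarantee that $u$ lies in some block, is a detail the paper leaves implicit.
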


\begin{proof}
Assume first that $\CCtau(\mathcal P)=\mathcal P$ and let $A\in\mathcal P$.
Take $u\in \tprod{A}{A}$ and let $B\in\mathcal P$ be the unique block containing $u$.
By Remark~\ref{rem:A-subset-tprodAA}, $B\subseteq \tprod{B}{B}$.
Since $u\in \tprod{A}{A}\cap B$, it follows that $\tprod{A}{A}\cap \tprod{B}{B}\neq\varnothing$.
Hence $A$ and $B$ are joined by an edge, so they would be merged by $\CCtau$.
Because $\CCtau(\mathcal P)=\mathcal P$, we must have $A=B$.
Therefore every element of $\tprod{A}{A}$ lies in $A$, that is, $\tprod{A}{A}\subseteq A$.
Together with Remark~\ref{rem:A-subset-tprodAA}, this gives $\tprod{A}{A}=A$.
Thus $\mathcal P$ is $\tau$-stable.

Conversely, assume that $\mathcal P$ is $\tau$-stable.
Then for distinct blocks $A,B\in\mathcal P$, $\tprod{A}{A}=A$, $\tprod{B}{B}=B$, so $\tprod{A}{A}\cap \tprod{B}{B}=A\cap B=\varnothing$.
Hence the graph defining $\CCtau(\mathcal P)$ has no edges between distinct blocks, so $\CCtau(\mathcal P)=\mathcal P$.
\end{proof}

\begin{lemma}\label{lem:omega-union-equivalence}
Let $(E_n)_{n\ge 0}$ be an increasing sequence of equivalence relations on a set $X$, meaning $E_n\subseteq E_{n+1}$ for all $n$.
Define $E_\omega:=\bigcup_{n\ge 0}E_n$.
Then $E_\omega$ is an equivalence relation.
Moreover, if $F$ is an equivalence relation on $X$ and $E_n\subseteq F$ for every $n$, then $E_\omega\subseteq F$.
\end{lemma}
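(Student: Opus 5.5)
The plan is to verify the three axioms of an equivalence relation for $E_\omega$ directly, and then the minimality clause. Each axiom will be inherited from a single member of the chain, except transitivity. For transitivity I will use that the chain is increasing.

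Reflexivity and symmetry are immediate. Every $E_n$, in particular $E_0$, contains the diagonal of $X$, so $E_\omega\supseteq E_0$ is reflexive. If $(x,y)\in E_\omega$, choose $n$ with $(x,y)\in E_n$. Symmetry of $E_n$ gives $(y,x)\in E_n\subseteq E_\omega$.

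The only step requiring care is transitivity, and this is where monotonicity of the sequence is used. Suppose $(x,y)\in E_m$ and $(y,z)\in E_n$ for some indices $m,n$. Put $k:=\max(m,n)$. Because the sequence is increasing, $E_m\subseteq E_k$ and $E_n\subseteq E_k$; this follows by a trivial induction from $E_j\subseteq E_{j+1}$. Hence both pairs lie in $E_k$, and transitivity of $E_k$ gives $(x,z)\in E_k\subseteq E_\omega$. Without the chain condition, a union of equivalence relations need not be transitive, so this is the one point where the hypothesis is essential.

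Finally, the bound: if $E_n\subseteq F$ for every $n$, then the union $E_\omega=\bigcup_n E_n$ is contained in $F$ by the definition of union. No property of $F$ beyond containing each $E_n$ is needed. Thus $E_\omega$ is the least equivalence relation containing all $E_n$, that is, their join in the lattice of equivalence relations on $X$.
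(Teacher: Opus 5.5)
Your proof is correct and follows the paper's argument essentially verbatim: reflexivity and symmetry inherited from the members of the chain, transitivity by passing to $E_k$ with $k=\max\{m,n\}$ using monotonicity, and the bound read off directly from the definition of the union. Your closing remark that $E_\omega$ is therefore the join of the $E_n$ among equivalence relations on $X$ is a correct restatement of what the lemma establishes.
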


\begin{proof}
Reflexivity and symmetry are immediate.
For transitivity, suppose $xE_\omega y$ and $yE_\omega z$.
Then $xE_m y$ and $yE_n z$ for some $m,n$.
With $k:=\max\{m,n\}$, monotonicity gives $xE_k y$ and $yE_k z$, hence $xE_k z$.
Thus $xE_\omega z$.
The final assertion follows directly from $E_\omega=\bigcup_n E_n$.
\end{proof}

\begin{definition}\label{def:Ctau-omega}
Let $\mathbf S$ be local-unit-aligned.
Start from the singleton partition $\mathcal P_{0}:=\bigl\{\{u\}:u\in\PosId(\mathbf S)\bigr\}$.
Define recursively $\mathcal P_{n+1}:=\CCtau(\mathcal P_{n})$ for $n\ge 0$, and let $\approx_n$ be the equivalence relation on $\PosId(\mathbf S)$ whose classes are the blocks of $\mathcal P_n$.
Since every step only merges blocks, the relations $\approx_n$ form an increasing sequence.
Set
\[
 u\approx_{\omega}^{\tau} v
 \quad\Longleftrightarrow\quad
 u\approx_n v\ \text{for some }n\ge 0.
\]
By Lemma~\ref{lem:omega-union-equivalence}, $\approx_{\omega}^{\tau}$ is an equivalence relation.
Define
\[
\Ctau(\mathbf S):=\PosId(\mathbf S)/{\approx_{\omega}^{\tau}}.
\]
We call $\Ctau(\mathbf S)$ the \emph{$\tau$-cohesive partition} of $\mathbf S$.
The induced family $\Ltau(\mathbf S):=\{\layer{A}:A\in\Ctau(\mathbf S)\}$ is the \emph{$\tau$-cohesive decomposition} of $\mathbf S$.
If the iteration stabilizes at a finite stage, then this omega-limit is the stable partition at that stage.
\end{definition}

\begin{theorem}\label{thm:finest-tau-stable}
The partition $\Ctau(\mathbf S)$ is the finest $\tau$-stable partition of $\PosId(\mathbf S)$.
Equivalently, $\Ltau(\mathbf S)$ is the finest decomposition of $S$ induced by a $\tau$-stable partition.
\end{theorem}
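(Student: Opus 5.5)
The proof has two halves: $\Ctau(\mathbf S)$ is $\tau$-stable, and it refines every $\tau$-stable partition. The second half is the easier one. Let $\mathcal Q$ be any $\tau$-stable partition of $\PosId(\mathbf S)$. We show by induction on $n$ that $\mathcal P_n$ refines $\mathcal Q$. The base case holds because the singleton partition $\mathcal P_0$ refines everything. For the step, suppose $\mathcal P_n$ refines $\mathcal Q$. Lemma~\ref{lem:CCtau-monotone} gives that $\mathcal P_{n+1}=\CCtau(\mathcal P_n)$ refines $\CCtau(\mathcal Q)$, and Proposition~\ref{prop:CCtau-fixed-points} gives $\CCtau(\mathcal Q)=\mathcal Q$. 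In terms of equivalence relations this says $\approx_n\subseteq F$ for every $n$, where $F$ is the equivalence relation of $\mathcal Q$. The final clause of Lemma~\ref{lem:omega-union-equivalence} then gives $\approx^\tau_\omega\subseteq F$, so $\Ctau(\mathbf S)$ refines $\mathcal Q$.

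For the first half we verify directly that every block $A$ of $\Ctau(\mathbf S)$ is $\tau$-stable; Proposition~\ref{prop:CCtau-fixed-points} is not needed here. Remark~\ref{rem:A-subset-tprodAA} gives $A\subseteq\tprod{A}{A}$, so it remains to show $\tprod{A}{A}\subseteq A$. Take $w=\tau(xy)$ with $\tau(x)=u\in A$ and $\tau(y)=v\in A$. Since $u\approx^\tau_\omega v$, there is an $n$ with $u\approx_n v$. Hence $u$ and $v$ lie in a common block $D$ of $\mathcal P_n$, and $D\subseteq A$. By definition $w\in\tprod{D}{D}$.

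Now let $E$ be the block of $\mathcal P_n$ containing $w$. Remark~\ref{rem:A-subset-tprodAA} gives $w\in E\subseteq\tprod{E}{E}$, so $\tprod{D}{D}\cap\tprod{E}{E}\neq\varnothing$. Either $D=E$, or $D$ and $E$ are joined by an edge in the graph of $\mathcal P_n$. In both cases they lie in the same block of $\mathcal P_{n+1}$. Therefore $w\approx_{n+1}u$, hence $w\approx^\tau_\omega u$, and so $w\in A$.

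The main obstacle is this step. Because $\Ctau(\mathbf S)$ is an $\omega$-limit, it is not literally a fixed point of one application of $\CCtau$. The argument works only because each witness $\tau(xy)$ involves just two idempotents $u,v$, and these are identified at some finite stage $n$. We must therefore reason about the finite-stage block $D$, and not about $A$ itself, since $\tprod{A}{A}$ might not be controlled at any single stage.

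Finally, the equivalent formulation for $\Ltau(\mathbf S)$ follows because $A\mapsto\layer{A}$ carries partitions of $\PosId(\mathbf S)$ to decompositions of $S$ and preserves refinement. The layers $\layer{A}$ are nonempty since $A\subseteq\layer{A}$ by Lemma~\ref{lem:basic_tau}\eqref{item:tau-fixes-idempotents}, and they are disjoint with union $S$ because $\tau$ is a function into $\PosId(\mathbf S)$. The map reflects refinement as well, since $A=\layer{A}\cap\PosId(\mathbf S)$.
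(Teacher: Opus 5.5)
Your proposal is correct and follows essentially the same route as the paper. Stability is shown by locating $\tau(x)$ and $\tau(y)$ in a common block of some finite stage $\mathcal P_n$ and merging it at stage $n+1$ with the block containing $\tau(xy)$. Fineness comes from Lemma~\ref{lem:CCtau-monotone} and the fixed-point characterization in Proposition~\ref{prop:CCtau-fixed-points}, applied inductively from $\mathcal P_0$. Your explicit handling of the case $D=E$ and your check of the layer formulation are minor elaborations of the paper's argument.
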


\begin{proof}
We first prove that $\Ctau(\mathbf S)$ is $\tau$-stable.
Let $A\in\Ctau(\mathbf S)$ and let $u\in\tprod{A}{A}$.
Choose $x,y\in\layer{A}$ such that $u=\tau(xy)$.
Put $a:=\tau(x)$ and $b:=\tau(y)$.
Since $a,b\in A$, by the definition of the omega-limit relation there is $n\ge 0$ such that $a\approx_n b$; let $P\in\mathcal P_n$ be the block containing them.
Then $u\in\tprod{P}{P}$.
Let $Q\in\mathcal P_n$ be the block containing $u$.
By Remark~\ref{rem:A-subset-tprodAA}, $Q\subseteq\tprod{Q}{Q}$, so $\tprod{P}{P}\cap\tprod{Q}{Q}\ne\varnothing$.
Thus $P$ and $Q$ are merged by $\CCtau$, and hence $a\approx_{n+1}u$.
Therefore $u\in A$.
This proves $\tprod{A}{A}\subseteq A$, while the reverse inclusion is Remark~\ref{rem:A-subset-tprodAA}.
Thus $A$ is $\tau$-stable.

Let $\mathcal Q$ be any $\tau$-stable partition of $\PosId(\mathbf S)$.
Then $\CCtau(\mathcal Q)=\mathcal Q$ by Proposition~\ref{prop:CCtau-fixed-points}.
The singleton partition $\mathcal P_0$ refines $\mathcal Q$, so Lemma~\ref{lem:CCtau-monotone} yields inductively that every $\mathcal P_n$ refines $\mathcal Q$.
Hence each omega-limit class of $\Ctau(\mathbf S)$ is contained in a block of $\mathcal Q$.
Thus $\Ctau(\mathbf S)$ refines every $\tau$-stable partition, so it is the finest one.
The statement about induced decompositions follows by taking $\tau^{-1}$ of the blocks.
\end{proof}

\begin{definition}\label{def:tau-cohesive}
A local-unit-aligned ordered semigroup $\mathbf S$ is called \emph{$\tau$-cohesive} if its $\tau$-cohesive partition is trivial, that is, $\Ctau(\mathbf S)=\{\PosId(\mathbf S)\}$.
Equivalently, its $\tau$-cohesive decomposition is trivial, that is, $\Ltau(\mathbf S)=\{S\}$.
\end{definition}

\begin{remark}\label{rem:tau-cohesive-naming-forward}
The terminology introduced here is intentionally two-level.
For every local-unit-aligned ordered semigroup $\mathbf S$, the notation $\Ctau(\mathbf S)$ denotes a canonical partition of $\PosId(\mathbf S)$, called the \emph{$\tau$-cohesive partition}, whether or not it is trivial.
By contrast, the semigroup $\mathbf S$ itself is called \emph{$\tau$-cohesive} only when this canonical partition has a single block.
The reason for this naming convention becomes conceptually clear in Theorem~\ref{thm:Ctau-characterized-by-components}.
That theorem shows that $\Ctau(\mathbf S)$ is the unique $\tau$-stable partition of $\PosId(\mathbf S)$ whose associated components are all intrinsically $\tau$-cohesive.
By Proposition~\ref{prop:components-inherit-local-unit-alignment}, these components are also intrinsically local-unit-aligned, with intrinsic local-unit map given by the restriction of the ambient map.
\end{remark}

\section{\texorpdfstring{$\tau$-multiplication-coherent partitions and the $\tau$-multiplication-coherent decomposition}{tau-multiplication-coherent partitions and the tau-multiplication-coherent decomposition}}\label{sec:tau-multiplication-coherent}

\begin{definition}[Output blocks]\label{def:m-output-blocks}
Let $\mathcal P$ be a partition of $\PosId(\mathbf S)$, and let $A,B\in\mathcal P$.
Define the set of \emph{output blocks} of the pair $(A,B)$ by
\[
\operatorname{Out}_{\mathcal P}(A,B)
 :=
 \{C\in\mathcal P:C\cap\tprod{A}{B}\neq\varnothing\}.
\]
Choose $p\in A$ and $q\in B$.
Since $p$ and $q$ are positive idempotents, $\tau(p)=p$ and $\tau(q)=q$, so $p\in\layer{A}$ and $q\in\layer{B}$.
Hence
\[
\tau(pq)\in\tprod{A}{B},
\]
and therefore both $\tprod{A}{B}$ and $\operatorname{Out}_{\mathcal P}(A,B)$ are nonempty.
The partition $\mathcal P$ is called \emph{$\tau$-multiplication-coherent} if $\operatorname{Out}_{\mathcal P}(A,B)$ is a singleton for all blocks $A,B\in\mathcal P$.
Equivalently, products of layers have a well-defined target component with respect to $\mathcal P$: the $\tau$-component of a product is determined by the two input blocks alone, rather than by the particular representatives chosen inside their layers.
\end{definition}

\begin{lemma}\label{lem:coherent-implies-stable}
Every $\tau$-multiplication-coherent partition is $\tau$-stable.
\end{lemma}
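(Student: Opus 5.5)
Let $\mathcal P$ be a $\tau$-multiplication-coherent partition and $A\in\mathcal P$. The plan is to show $\tprod{A}{A}=A$ directly. By Remark~\ref{rem:A-subset-tprodAA} we already have $A\subseteq\tprod{A}{A}$, so only the inclusion $\tprod{A}{A}\subseteq A$ needs proof.

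The key observation is that $A$ itself is an output block of the pair $(A,A)$. Pick any $p\in A$. Then $p\in\layer{A}$ by Lemma~\ref{lem:basic_tau}\eqref{item:tau-fixes-idempotents}, and
\[
\tau(pp)=\tau(p)=p\in A .
\]
Hence $p\in A\cap\tprod{A}{A}$, so $A\in\operatorname{Out}_{\mathcal P}(A,A)$. By coherence $\operatorname{Out}_{\mathcal P}(A,A)$ is a singleton, and therefore $\operatorname{Out}_{\mathcal P}(A,A)=\{A\}$.

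Now let $u\in\tprod{A}{A}$, and let $C\in\mathcal P$ be the block containing $u$. Then $C\cap\tprod{A}{A}\ne\varnothing$, so $C\in\operatorname{Out}_{\mathcal P}(A,A)=\{A\}$. Thus $C=A$ and $u\in A$, which gives $\tprod{A}{A}\subseteq A$. Blocks of a partition are nonempty, so every block is $\tau$-stable, and hence $\mathcal P$ is $\tau$-stable.

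There is no real obstacle here. The only point needing care is to exhibit $A$ explicitly as an output block via an idempotent $p\in A$, so that uniqueness of the output block forces every element of $\tprod{A}{A}$ into $A$.
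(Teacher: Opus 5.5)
Your proof is correct and follows the paper's argument: exhibit $A$ as an output block of $(A,A)$ (the paper cites Remark~\ref{rem:A-subset-tprodAA}, which is exactly your computation $\tau(pp)=p$), then use that $\operatorname{Out}_{\mathcal P}(A,A)$ is a singleton to force $\tprod{A}{A}\subseteq A$. Your explicit mention that blocks are nonempty, as the definition of $\tau$-stability requires, is a small extra that the paper leaves implicit.
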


\begin{proof}
Let $\mathcal P$ be $\tau$-multiplication-coherent and let $A\in\mathcal P$.
By Remark~\ref{rem:A-subset-tprodAA}, the set $\tprod{A}{A}$ meets $A$, so $A\in\operatorname{Out}_{\mathcal P}(A,A)$.
By coherence, $\operatorname{Out}_{\mathcal P}(A,A)$ is a singleton.
Hence every block meeting $\tprod{A}{A}$ is $A$, and therefore $\tprod{A}{A}\subseteq A$.
Together with Remark~\ref{rem:A-subset-tprodAA}, this yields $\tprod{A}{A}=A$.
Hence $\mathcal P$ is $\tau$-stable.
\end{proof}

\begin{definition}
Let $\mathcal P$ be a partition of $\PosId(\mathbf S)$.
Define a graph on the set of blocks of $\mathcal P$ by joining two blocks $C$ and $D$ whenever there exist blocks $A,B\in\mathcal P$ such that
\[
C,D\in\operatorname{Out}_{\mathcal P}(A,B).
\]
Equivalently, the edge condition says that both $C$ and $D$ meet $\tprod{A}{B}$.
We denote by $\CCm(\mathcal P)$ the partition obtained by merging precisely those blocks that lie in the same connected component of this graph.
\end{definition}

\begin{lemma}\label{lem:CCx-monotone}
If a partition $\mathcal P$ refines a partition $\mathcal Q$, then $\CCm(\mathcal P)$ refines $\CCm(\mathcal Q)$.
\end{lemma}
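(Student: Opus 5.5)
The argument follows Lemma~\ref{lem:CCtau-monotone}: we show that every edge in the graph defining $\CCm(\mathcal P)$ is sent, under the canonical map taking each block of $\mathcal P$ to the block of $\mathcal Q$ containing it, either to a single vertex or to an edge of the graph defining $\CCm(\mathcal Q)$. Connected components for $\mathcal P$ are then carried into connected components for $\mathcal Q$. Since every block of $\mathcal P$ lies in a block of $\mathcal Q$, each block of $\CCm(\mathcal P)$ is then contained in a block of $\CCm(\mathcal Q)$.

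Concretely, let $C,D\in\mathcal P$ be joined by an edge. Then there are blocks $A,B\in\mathcal P$ with $C,D\in\operatorname{Out}_{\mathcal P}(A,B)$, that is, both $C$ and $D$ meet $\tprod{A}{B}$. Let $A',B',C',D'$ be the blocks of $\mathcal Q$ containing $A,B,C,D$, respectively. By Lemma~\ref{lem:tprod-monotone}, $\tprod{A}{B}\subseteq\tprod{A'}{B'}$. Hence, if $u\in C\cap\tprod{A}{B}$, then $u\in C'\cap\tprod{A'}{B'}$, so $C'\in\operatorname{Out}_{\mathcal Q}(A',B')$. Likewise $D'\in\operatorname{Out}_{\mathcal Q}(A',B')$. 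If $C'=D'$, the edge collapses to a vertex. Otherwise $C'$ and $D'$ are joined by an edge in the graph of $\mathcal Q$, witnessed by the pair $(A',B')$.

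A path $C=C_0,C_1,\dots,C_k=D$ in the graph of $\mathcal P$ therefore maps to a walk in the graph of $\mathcal Q$, possibly with repeated vertices. So $C'$ and $D'$ lie in the same connected component for $\mathcal Q$. It follows that the union of a connected component of $\mathcal P$-blocks lies inside the union of a connected component of $\mathcal Q$-blocks, which is exactly the statement that $\CCm(\mathcal P)$ refines $\CCm(\mathcal Q)$.

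The only point needing care is the monotonicity of output sets, namely that membership in $\operatorname{Out}$ transfers upward. This rests entirely on Lemma~\ref{lem:tprod-monotone} applied to two different arguments $A\subseteq A'$ and $B\subseteq B'$, rather than the diagonal case used for $\CCtau$. Everything else is the same bookkeeping on connected components as in Lemma~\ref{lem:CCtau-monotone}.
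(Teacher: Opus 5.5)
Your proof is correct and follows the paper's argument essentially step for step. Both arguments apply Lemma~\ref{lem:tprod-monotone} to $A\subseteq A'$ and $B\subseteq B'$ to send each edge of the $\CCm(\mathcal P)$-graph either to a single block or to an edge of the $\CCm(\mathcal Q)$-graph, and then conclude that connected components are carried into connected components; your explicit witness $u\in C\cap\tprod{A}{B}\subseteq C'\cap\tprod{A'}{B'}$ just spells out the membership step that the paper states in one line.
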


\begin{proof}
Let $C,D\in\mathcal P$ be joined by an edge in the graph defining $\CCm(\mathcal P)$.
Then there exist $A,B\in\mathcal P$ such that $C,D\in\operatorname{Out}_{\mathcal P}(A,B)$.
Let $A',B',C',D'$ be the blocks of $\mathcal Q$ containing $A,B,C,D$, respectively.
By Lemma~\ref{lem:tprod-monotone}, $\tprod{A}{B}\subseteq \tprod{A'}{B'}$.
Hence $C',D'\in\operatorname{Out}_{\mathcal Q}(A',B')$.
If $C'=D'$, then the edge collapses to one block of $\mathcal Q$.
If $C'\neq D'$, then $C'$ and $D'$ are joined by an edge in the graph defining $\CCm(\mathcal Q)$.
Thus every edge for $\mathcal P$ maps either to a single block or to an edge for $\mathcal Q$, and therefore connected components for $\mathcal P$ map into connected components for $\mathcal Q$.
This proves that $\CCm(\mathcal P)$ refines $\CCm(\mathcal Q)$.
\end{proof}

\begin{proposition}\label{prop:CCx-fixed-points}
For a partition $\mathcal P$ of $\PosId(\mathbf S)$, the following are equivalent:
\begin{enumerate}
\item\label{item:CCx-fixed} $\CCm(\mathcal P)=\mathcal P$;
\item\label{item:CCx-coherent} $\mathcal P$ is $\tau$-multiplication-coherent.
\end{enumerate}
\end{proposition}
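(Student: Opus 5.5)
The argument mirrors the proof of Proposition~\ref{prop:CCtau-fixed-points}, with the graph of $\CCtau$ replaced by the output-block graph defining $\CCm$. The key observation is that $\CCm(\mathcal P)=\mathcal P$ holds exactly when the graph on the blocks of $\mathcal P$ has no edge between two distinct blocks. Indeed, $\CCm(\mathcal P)$ merges precisely the blocks lying in a common connected component. Hence it coincides with $\mathcal P$ if and only if every connected component is a single vertex, which is the same as having no edges between distinct vertices. Loops $C=D$ are harmless because they merge nothing.

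For \eqref{item:CCx-fixed}$\Rightarrow$\eqref{item:CCx-coherent}, assume $\CCm(\mathcal P)=\mathcal P$ and fix $A,B\in\mathcal P$. By Definition~\ref{def:m-output-blocks}, $\operatorname{Out}_{\mathcal P}(A,B)$ is nonempty, since it contains the block of $\tau(pq)$ for any $p\in A$ and $q\in B$. Suppose $C,D\in\operatorname{Out}_{\mathcal P}(A,B)$. Then $C$ and $D$ are joined by an edge witnessed by the pair $(A,B)$, so they lie in the same connected component. By the observation above this forces $C=D$. Thus $\operatorname{Out}_{\mathcal P}(A,B)$ is a singleton, and $\mathcal P$ is $\tau$-multiplication-coherent.

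For \eqref{item:CCx-coherent}$\Rightarrow$\eqref{item:CCx-fixed}, assume coherence. Any edge joins blocks $C,D$ that both lie in some $\operatorname{Out}_{\mathcal P}(A,B)$. Because that set is a singleton, $C=D$. So the graph has no edges between distinct blocks, every component is a single block, and $\CCm(\mathcal P)=\mathcal P$.

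No step is a real obstacle. The only point needing care is the identification "fixed point of $\CCm$ $\iff$ no edges between distinct blocks". This relies only on the definition of merging along connected components. Nonemptiness of the output sets is already supplied by Definition~\ref{def:m-output-blocks}, which makes the singleton conclusion exact rather than merely "at most one".
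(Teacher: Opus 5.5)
Your proposal is correct and follows essentially the same argument as the paper. Your one addition is to spell out two points the paper leaves implicit: that a fixed point of $\CCm$ means no edge joins distinct blocks, and that $\operatorname{Out}_{\mathcal P}(A,B)$ is nonempty, which makes ``at most one'' into ``exactly one''.
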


\begin{proof}
Assume first that $\CCm(\mathcal P)=\mathcal P$.
Let $A,B\in\mathcal P$.
If $C,D\in\operatorname{Out}_{\mathcal P}(A,B)$, then $C$ and $D$ are joined by an edge in the graph defining $\CCm(\mathcal P)$.
Since the partition is unchanged, $C=D$.
Thus $\operatorname{Out}_{\mathcal P}(A,B)$ is a singleton, proving coherence.

Conversely, assume that $\mathcal P$ is $\tau$-multiplication-coherent.
Then for every pair $A,B\in\mathcal P$, the set $\operatorname{Out}_{\mathcal P}(A,B)$ has only one element.
Therefore no edge in the graph defining $\CCm(\mathcal P)$ joins two distinct blocks.
Hence $\CCm(\mathcal P)=\mathcal P$.
\end{proof}

\begin{definition}\label{def:Cm-omega}
Let $\mathbf S$ be local-unit-aligned.
Start again from the singleton partition $\mathcal Q_{0}:=\bigl\{\{u\}:u\in\PosId(\mathbf S)\bigr\}$.
Define recursively $\mathcal Q_{n+1}:=\CCm(\mathcal Q_{n})$ for $n\ge 0$, and let $\equiv_n$ be the equivalence relation on $\PosId(\mathbf S)$ whose classes are the blocks of $\mathcal Q_n$.
Since every step only merges blocks, the relations $\equiv_n$ form an increasing sequence.
Set
\[
 u\equiv_{\omega}^{\mathrm m} v
 \quad\Longleftrightarrow\quad
 u\equiv_n v\ \text{for some }n\ge 0.
\]
By Lemma~\ref{lem:omega-union-equivalence}, $\equiv_{\omega}^{\mathrm m}$ is an equivalence relation.
Define
\[
\Cm(\mathbf S):=\PosId(\mathbf S)/{\equiv_{\omega}^{\mathrm m}}.
\]
We call $\Cm(\mathbf S)$ the \emph{canonical $\tau$-multiplication-coherent partition} of $\mathbf S$, or equivalently the \emph{multiplicative coherentization} of $\mathbf S$.
The induced family $\Lm(\mathbf S):=\{\layer{A}:A\in\Cm(\mathbf S)\}$ is the \emph{canonical $\tau$-multiplication-coherent decomposition} of $\mathbf S$, or equivalently the \emph{multiplicative coherentization decomposition}.
If the iteration stabilizes at a finite stage, then this omega-limit is the fixed point at that stage.
\end{definition}

\begin{definition}\label{def:tau-multiplication-cohesive}
A local-unit-aligned ordered semigroup $\mathbf S$ is called \emph{$\tau$-multiplication-cohesive} if its canonical $\tau$-multiplication-coherent partition is trivial, that is, $\Cm(\mathbf S)=\{\PosId(\mathbf S)\}$.
Equivalently, its canonical $\tau$-multiplication-coherent decomposition is trivial, that is, it has a single component: $\Lm(\mathbf S)=\{S\}$.
\end{definition}

The following theorem gives the universal property which justifies viewing $\Cm(\mathbf S)$ as the canonical multiplication-coherent quotient.

\begin{theorem}\label{thm:finest-coherent}
The partition $\Cm(\mathbf S)$ is the finest $\tau$-multiplication-coherent partition of $\PosId(\mathbf S)$.
\end{theorem}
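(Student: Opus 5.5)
The argument mirrors the proof of Theorem~\ref{thm:finest-tau-stable}, now using $\CCm$, Lemma~\ref{lem:CCx-monotone} and Proposition~\ref{prop:CCx-fixed-points}. There are two parts: first, $\Cm(\mathbf S)$ is itself $\tau$-multiplication-coherent; second, it refines every $\tau$-multiplication-coherent partition.

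For coherence, fix blocks $A,B\in\Cm(\mathbf S)$ and two elements $u,v\in\tprod{A}{B}$. We must show $u\equiv_\omega^{\mathrm m}v$. Write $u=\tau(xy)$ and $v=\tau(x'y')$ with $x,x'\in\layer{A}$ and $y,y'\in\layer{B}$. Then $\tau(x)\equiv_\omega^{\mathrm m}\tau(x')$ and $\tau(y)\equiv_\omega^{\mathrm m}\tau(y')$. Since the relations $\equiv_n$ increase, Lemma~\ref{lem:omega-union-equivalence} gives a single $n$ with $\tau(x)\equiv_n\tau(x')$ and $\tau(y)\equiv_n\tau(y')$.

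Let $P,R\in\mathcal Q_n$ be the blocks containing $\tau(x),\tau(x')$ and $\tau(y),\tau(y')$ respectively. Then $x,x'\in\layer{P}$ and $y,y'\in\layer{R}$, so $u,v\in\tprod{P}{R}$. Let $C,D\in\mathcal Q_n$ be the blocks containing $u$ and $v$. Both lie in $\operatorname{Out}_{\mathcal Q_n}(P,R)$, so either $C=D$ or $C$ and $D$ are joined by an edge in the graph defining $\CCm(\mathcal Q_n)$. In either case $u\equiv_{n+1}v$, hence $u\equiv_\omega^{\mathrm m}v$.

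Thus every element of $\tprod{A}{B}$ lies in a single block of $\Cm(\mathbf S)$. Since $\tprod{A}{B}$ is nonempty by Definition~\ref{def:m-output-blocks}, $\operatorname{Out}_{\Cm(\mathbf S)}(A,B)$ is a singleton. The main point needing care is exactly this step: collecting finitely many $\omega$-witnesses into one common stage $n$, and observing that layers and $\tau$-products are monotone under enlarging blocks (Lemma~\ref{lem:tprod-monotone}). This is what lets an output conflict at the limit be seen already at a finite stage.

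For minimality, let $\mathcal Q$ be any $\tau$-multiplication-coherent partition. By Proposition~\ref{prop:CCx-fixed-points}, $\CCm(\mathcal Q)=\mathcal Q$. The singleton partition $\mathcal Q_0$ refines $\mathcal Q$. If $\mathcal Q_n$ refines $\mathcal Q$, then Lemma~\ref{lem:CCx-monotone} shows that $\mathcal Q_{n+1}=\CCm(\mathcal Q_n)$ refines $\CCm(\mathcal Q)=\mathcal Q$. By induction every $\equiv_n$ is contained in the equivalence relation $F$ of $\mathcal Q$. The second clause of Lemma~\ref{lem:omega-union-equivalence} then gives $\equiv_\omega^{\mathrm m}\subseteq F$, so $\Cm(\mathbf S)$ refines $\mathcal Q$.

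Combining the two parts, $\Cm(\mathbf S)$ is a $\tau$-multiplication-coherent partition that refines every other one, so it is the finest.
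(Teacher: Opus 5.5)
Your proposal is correct and follows essentially the same route as the paper's proof. Both prove coherence by pulling the two $\omega$-witnesses back to a common finite stage $\mathcal Q_n$ and merging the output blocks at stage $n+1$, and both prove minimality by induction using Lemma~\ref{lem:CCx-monotone} and Proposition~\ref{prop:CCx-fixed-points}. Your explicit handling of the case $C=D$ and of the nonemptiness of $\tprod{A}{B}$ are small extra points of care.
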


\begin{proof}
We first prove that $\Cm(\mathbf S)$ is $\tau$-multiplication-coherent.
Let $A,B\in\Cm(\mathbf S)$ and let $u,v\in\tprod{A}{B}$.
Choose $x_1,x_2\in\layer{A}$ and $y_1,y_2\in\layer{B}$ such that $u=\tau(x_1y_1)$ and $v=\tau(x_2y_2)$.
Since $\tau(x_1),\tau(x_2)\in A$, there is $n_A\ge 0$ such that $\tau(x_1)\equiv_{n_A}\tau(x_2)$.
Since $\tau(y_1),\tau(y_2)\in B$, there is $n_B\ge 0$ such that $\tau(y_1)\equiv_{n_B}\tau(y_2)$.
After replacing both stages by $n:=\max\{n_A,n_B\}$, using monotonicity of the iteration, we have both $\tau(x_1)\equiv_n\tau(x_2)$ and $\tau(y_1)\equiv_n\tau(y_2)$.
Let $P,Q\in\mathcal Q_n$ be the blocks containing the two input local units.
Then $u,v\in\tprod{P}{Q}$.
If $U,V\in\mathcal Q_n$ are the blocks containing $u,v$, respectively, then
\[
U,V\in\operatorname{Out}_{\mathcal Q_n}(P,Q).
\]
Hence $U$ and $V$ are merged by $\CCm$, so $u\equiv_{n+1}v$.
Thus $\tprod{A}{B}$ is contained in a single block of $\Cm(\mathbf S)$, proving coherence.

Let $\mathcal R$ be any $\tau$-multiplication-coherent partition of $\PosId(\mathbf S)$.
Then $\CCm(\mathcal R)=\mathcal R$ by Proposition~\ref{prop:CCx-fixed-points}.
The singleton partition $\mathcal Q_0$ refines $\mathcal R$, so Lemma~\ref{lem:CCx-monotone} yields inductively that every $\mathcal Q_n$ refines $\mathcal R$.
Hence each omega-limit class of $\Cm(\mathbf S)$ is contained in a block of $\mathcal R$.
Thus $\Cm(\mathbf S)$ refines every $\tau$-multiplication-coherent partition, so it is the finest one.
\end{proof}

\begin{theorem}\label{thm:tau-refines-mult}
The $\tau$-cohesive partition $\Ctau(\mathbf S)$ refines the
$\tau$-multiplication-coherent partition $\Cm(\mathbf S)$.
Consequently, every block of $\Cm(\mathbf S)$ is a union of blocks of $\Ctau(\mathbf S)$, and every component of $\Lm(\mathbf S)$ is a union of components of $\Ltau(\mathbf S)$.
\end{theorem}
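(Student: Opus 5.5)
The plan is to combine the two universal properties already established. By Lemma~\ref{lem:coherent-implies-stable}, every $\tau$-multiplication-coherent partition is $\tau$-stable. By Theorem~\ref{thm:finest-coherent}, $\Cm(\mathbf S)$ is $\tau$-multiplication-coherent, so $\Cm(\mathbf S)$ is a $\tau$-stable partition of $\PosId(\mathbf S)$. Theorem~\ref{thm:finest-tau-stable} states that $\Ctau(\mathbf S)$ is the finest $\tau$-stable partition, that is, it refines every $\tau$-stable partition. Hence $\Ctau(\mathbf S)$ refines $\Cm(\mathbf S)$.

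For the consequences, refinement means every block $A\in\Ctau(\mathbf S)$ lies inside some block of $\Cm(\mathbf S)$. Fix a block $B\in\Cm(\mathbf S)$. Since $\Ctau(\mathbf S)$ partitions $\PosId(\mathbf S)$, each $u\in B$ lies in a unique block $A_u\in\Ctau(\mathbf S)$. That block $A_u$ is contained in some block of $\Cm(\mathbf S)$, and this block meets $B$ at $u$. Because $\Cm(\mathbf S)$ is a partition, the containing block must be $B$, so $A_u\subseteq B$. Therefore $B=\bigcup_{u\in B}A_u$ is a union of $\Ctau(\mathbf S)$-blocks.

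Finally, the preimage map $\tau^{-1}$ preserves unions. Applying it gives $\layer{B}=\bigcup_{u\in B}\layer{A_u}$, which exhibits each component of $\Lm(\mathbf S)$ as a union of components of $\Ltau(\mathbf S)$.

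There is no real obstacle here. The only point needing care is the direction of the refinement statement in Theorem~\ref{thm:finest-tau-stable}: "finest" there means it refines every $\tau$-stable partition, as its proof establishes, and this is exactly the direction the argument requires.
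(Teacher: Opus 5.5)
Your proposal is correct and follows the paper's proof: $\Cm(\mathbf S)$ is $\tau$-multiplication-coherent by Theorem~\ref{thm:finest-coherent}, hence $\tau$-stable by Lemma~\ref{lem:coherent-implies-stable}, and so it is refined by the finest $\tau$-stable partition $\Ctau(\mathbf S)$ (Theorem~\ref{thm:finest-tau-stable}). Your explicit union and $\tau^{-1}$ argument spells out what the paper states in a single line.
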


\begin{proof}
By Theorem~\ref{thm:finest-coherent}, the partition $\Cm(\mathbf S)$ is $\tau$-multiplication-coherent.
By Lemma~\ref{lem:coherent-implies-stable}, it is therefore $\tau$-stable.
Now Theorem~\ref{thm:finest-tau-stable} says that $\Ctau(\mathbf S)$ refines every $\tau$-stable partition.
In particular, it refines $\Cm(\mathbf S)$.
The statements about unions follow by taking unions of the corresponding blocks and their $\tau$-preimages.
\end{proof}

\begin{corollary}\label{cor:tau-cohesive-implies-multiplication-cohesive}
Every $\tau$-cohesive local-unit-aligned ordered semigroup is $\tau$-multiplication-cohesive.
\end{corollary}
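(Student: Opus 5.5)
The corollary is a direct translation of Theorem~\ref{thm:tau-refines-mult} into the language of Definitions~\ref{def:tau-cohesive} and~\ref{def:tau-multiplication-cohesive}. Let $\mathbf S$ be $\tau$-cohesive, so that $\Ctau(\mathbf S)=\{\PosId(\mathbf S)\}$. The goal is $\Cm(\mathbf S)=\{\PosId(\mathbf S)\}$.

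First, Theorem~\ref{thm:tau-refines-mult} says that $\Ctau(\mathbf S)$ refines $\Cm(\mathbf S)$. So the single block $\PosId(\mathbf S)$ of $\Ctau(\mathbf S)$ lies inside some block $B\in\Cm(\mathbf S)$. Since $B\subseteq\PosId(\mathbf S)$, this forces $B=\PosId(\mathbf S)$.

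Second, a partition of $\PosId(\mathbf S)$ has pairwise disjoint blocks, and none of them is empty. Because $B$ already exhausts the whole set, there is no room for another block. Hence $\Cm(\mathbf S)=\{\PosId(\mathbf S)\}$.

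Third, for the decomposition form, take $\tau$-preimages: $\Lm(\mathbf S)=\{\layer{\PosId(\mathbf S)}\}=\{S\}$. This uses Lemma~\ref{lem:basic_tau}, which gives $\tau(x)\in\PosId(\mathbf S)$ for every $x\in S$.

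No step is a real obstacle. The only point needing care is that $\PosId(\mathbf S)$ is nonempty, so that the trivial partition really has one block. This holds because $S$ is nonempty (it is a poset with a semigroup structure) and $\tau(x)$ is a positive idempotent for any $x\in S$.

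Alternatively, one could bypass Theorem~\ref{thm:tau-refines-mult}: by Lemma~\ref{lem:coherent-implies-stable}, $\Cm(\mathbf S)$ is $\tau$-stable, and Theorem~\ref{thm:finest-tau-stable} then gives the same refinement. The main route above is shorter and is the one I would use.
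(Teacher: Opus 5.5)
Your proposal is correct and follows the paper's own argument. The paper also concludes from Theorem~\ref{thm:tau-refines-mult} that the one-block partition $\Ctau(\mathbf S)$ refines $\Cm(\mathbf S)$, which forces $\Cm(\mathbf S)=\{\PosId(\mathbf S)\}$. Your separate nonemptiness check is unnecessary, since the hypothesis $\Ctau(\mathbf S)=\{\PosId(\mathbf S)\}$ already presents $\PosId(\mathbf S)$ as a block.
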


\begin{proof}
Let $\mathbf S$ be $\tau$-cohesive.
Then $\Ctau(\mathbf S)=\{\PosId(\mathbf S)\}$.
By Theorem~\ref{thm:tau-refines-mult}, the partition $\Ctau(\mathbf S)$ refines $\Cm(\mathbf S)$.
Since $\Ctau(\mathbf S)$ has only one block, this is possible only if $\Cm(\mathbf S)=\{\PosId(\mathbf S)\}$.
Hence $\mathbf S$ is $\tau$-multiplication-cohesive.
\end{proof}

\begin{example}[$\tau$-multiplication-cohesive need not imply $\tau$-cohesive]
\label{ex:tau-mult-cohesive-not-tau-cohesive}
The converse of Corollary~\ref{cor:tau-cohesive-implies-multiplication-cohesive} is false, even for a finite totally ordered monoid.
Let $S=\{1<2<3<4\}$, and define multiplication by
\[
\begin{array}{c|cccc}
\cdot & 1&2&3&4\\
\hline
1&1&2&3&4\\
2&2&2&4&4\\
3&3&4&4&4\\
4&4&4&4&4
\end{array}
\]
The table is symmetric and each row is nondecreasing, so multiplication is commutative and isotone.
The element $1$ is an identity and $4$ is absorbing; on $\{2,3,4\}$, the only product not equal to $4$ is $2^2=2$, which also makes associativity immediate.
Thus $S$ is a totally ordered monoid.
The local-unit map is
\[
\tau(1)=1,\qquad
\tau(2)=2,\qquad
\tau(3)=1,\qquad
\tau(4)=4.
\]
The positive idempotents are
$
\PosId(\mathbf S)=\{1,2,4\}.
$
A direct calculation gives
\[
\Ctau(\mathbf S)=\{\{1,4\},\{2\}\}.
\]
Indeed, $\tprod{\{1\}}{\{1\}}$ meets $\{4\}$ because $3\cdot3=4$, so the singleton blocks $\{1\}$ and $\{4\}$ are merged in the $\tau$-stable closure.
The block $\{2\}$ remains closed, since products of elements with local unit $2$ again have local unit $2$.
Thus the $\tau$-stable closure is exactly $\{\{1,4\},\{2\}\}$.
Hence the induced $\tau$-cohesive decomposition is
\[
\Ltau(\mathbf S)=\{\{1,3,4\},\{2\}\},
\]
so $\mathbf S$ is not $\tau$-cohesive.
On the other hand, the $\tau$-multiplication-coherent partition is trivial:
\[
\Cm(\mathbf S)=\{\{1,2,4\}\}.
\]
This is forced because products with input local-unit blocks $\{1,4\}$ and $\{2\}$ produce outputs in both $\{1,4\}$ and $\{2\}$, so the two $\tau$-stable blocks must be merged by the multiplication-coherent closure.
Equivalently,
\[
\Lm(\mathbf S)=\{S\}.
\]
Thus $\mathbf S$ is $\tau$-multiplication-cohesive but not $\tau$-cohesive.
\end{example}

\section{Component ordered semigroups}
\label{sec:component-semigroups}

The partitions $\Ctau(\mathbf S)$ and $\Cm(\mathbf S)$ constructed above are partitions of the positive-idempotent skeleton.
Their layer preimages are the corresponding decompositions of the ambient semigroup.
In the infinite setting the correct component object is first of all a semigroup component; a component identity exists only under an additional least-element hypothesis.

\begin{proposition}\label{prop:tau-stable-gives-subsemigroup}
Let $\mathcal P$ be a $\tau$-stable partition of $\PosId(\mathbf S)$, and let $A\in\mathcal P$.
Then $\layer{A}$ is closed under multiplication.
Consequently $\layer{A}$, with the restricted order and multiplication, is an ordered subsemigroup of $\mathbf S$.
\end{proposition}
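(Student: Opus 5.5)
The argument unfolds the definitions directly. Take $x,y\in\layer{A}$, so $\tau(x),\tau(y)\in A$. By the definition of the $\tau$-saturated product, $\tau(xy)\in\tprod{A}{A}$. The block $A$ is $\tau$-stable because $\mathcal P$ is a $\tau$-stable partition, so $\tprod{A}{A}=A$. Hence $\tau(xy)\in A$, which means $xy\in\layer{A}$. This proves that $\layer{A}$ is closed under multiplication.

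It remains to check that $\layer{A}$ is a nonempty ordered subsemigroup under the restricted structure.

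For nonemptiness, a block of a partition is nonempty, so pick $u\in A$. By Lemma~\ref{lem:basic_tau}\eqref{item:tau-fixes-idempotents}, $\tau(u)=u\in A$, so $u\in\layer{A}$.

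Associativity holds because it is inherited from $\mathbf S$. The restriction of $\le$ is a partial order on $\layer{A}$. Isotonicity of the restricted multiplication in each argument is the ambient isotonicity applied to elements of $\layer{A}$; closure guarantees that the products compared again lie in $\layer{A}$.

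No step is a real obstacle, since $\tau$-stability is tailored to give closure. The one point needing care is that only the inclusion $\tprod{A}{A}\subseteq A$ is used. That inclusion is the nontrivial half of stability, the other half being Remark~\ref{rem:A-subset-tprodAA}.
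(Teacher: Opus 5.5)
Your proof is correct and is essentially the paper's argument: for $x,y\in\layer{A}$ one has $\tau(xy)\in\tprod{A}{A}=A$, so $xy\in\layer{A}$. Your additional checks are accurate details that the paper leaves implicit: nonemptiness via $\tau(u)=u$ for $u\in A$, and the ordered-semigroup axioms inherited from $\mathbf S$.
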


\begin{proof}
Let $x,y\in\layer{A}$.
Then $\tau(x),\tau(y)\in A$, and $\tau(xy)\in\tprod{A}{A}=A$ because $A$ is $\tau$-stable.
Hence $xy\in\layer{A}$.
\end{proof}

\begin{corollary}\label{cor:canonical-components-subsemigroups}
Every component of $\Ltau(\mathbf S)$ and every component of $\Lm(\mathbf S)$ is an ordered subsemigroup of $\mathbf S$.
\end{corollary}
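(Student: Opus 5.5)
The corollary follows by combining Proposition~\ref{prop:tau-stable-gives-subsemigroup} with the stability results already established for the two canonical partitions. The plan is to treat the two families of components separately but in parallel. In each case we exhibit the relevant canonical partition as a $\tau$-stable partition of $\PosId(\mathbf S)$, and then read off closure under multiplication.

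For $\Ltau(\mathbf S)$, Theorem~\ref{thm:finest-tau-stable} states that $\Ctau(\mathbf S)$ is a $\tau$-stable partition of $\PosId(\mathbf S)$. Each component of $\Ltau(\mathbf S)$ has the form $\layer{A}$ with $A\in\Ctau(\mathbf S)$. Proposition~\ref{prop:tau-stable-gives-subsemigroup} applied to $\mathcal P=\Ctau(\mathbf S)$ then shows that $\layer{A}$ is closed under multiplication.

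For $\Lm(\mathbf S)$, Theorem~\ref{thm:finest-coherent} shows that $\Cm(\mathbf S)$ is $\tau$-multiplication-coherent. Lemma~\ref{lem:coherent-implies-stable} then upgrades this to $\tau$-stability, so Proposition~\ref{prop:tau-stable-gives-subsemigroup} again applies to every $\layer{A}$ with $A\in\Cm(\mathbf S)$.

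It remains to check that such a closed subset is an ordered subsemigroup, and this needs no separate work. Associativity is inherited from $S$, the restricted relation is still a partial order, and isotonicity in both arguments holds because the inequalities $xz\le yz$ and $zx\le zy$ in $S$ involve only elements that, by closure, lie in $\layer{A}$. The component is also nonempty, since $A\subseteq\layer{A}$ by Lemma~\ref{lem:basic_tau}\eqref{item:tau-fixes-idempotents}.

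There is no real obstacle here. The only point requiring care is the multiplication-coherent case, where one must pass through Lemma~\ref{lem:coherent-implies-stable} rather than appeal directly to a stability statement for $\Cm(\mathbf S)$.
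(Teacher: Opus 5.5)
Your proposal is correct and follows the paper's proof: you get $\tau$-stability of $\Ctau(\mathbf S)$ from Theorem~\ref{thm:finest-tau-stable} and of $\Cm(\mathbf S)$ from Theorem~\ref{thm:finest-coherent} combined with Lemma~\ref{lem:coherent-implies-stable}, and then apply Proposition~\ref{prop:tau-stable-gives-subsemigroup}. Your extra remarks on the inherited order, isotonicity, and nonemptiness are accurate and are already implicit in that proposition.
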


\begin{proof}
The partition $\Ctau(\mathbf S)$ is $\tau$-stable by Theorem~\ref{thm:finest-tau-stable}.
The partition $\Cm(\mathbf S)$ is $\tau$-multiplication-coherent by Theorem~\ref{thm:finest-coherent}, hence $\tau$-stable by Lemma~\ref{lem:coherent-implies-stable}.
Apply Proposition~\ref{prop:tau-stable-gives-subsemigroup}.
\end{proof}

\begin{proposition}[Intrinsic local units of components over stable blocks]
\label{prop:components-inherit-local-unit-alignment}
Let $\mathcal P$ be a $\tau$-stable partition of $\PosId(\mathbf S)$, and let $A\in\mathcal P$.
Equip $\layer{A}$ with the restricted order and multiplication.
Then $\layer{A}$ is local-unit-aligned.
Moreover,
\[
\PosId(\layer{A})=A
\qquad\text{and}\qquad
\tau_{\layer{A}}=\tau_{\mathbf S}|_{\layer{A}}.
\]
In particular, this applies to every component of $\Lm(\mathbf S)$.
\end{proposition}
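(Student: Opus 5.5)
We must show that $\layer{A}$ is local-unit-aligned, that its positive idempotents are exactly $A$, and that its intrinsic local-unit map is the restriction of $\tau$. By Proposition~\ref{prop:tau-stable-gives-subsemigroup}, $\layer{A}$ is already an ordered subsemigroup. We fix $x\in\layer{A}$ and compute its local units inside $\layer{A}$. By Lemma~\ref{lem:basic_tau}\eqref{item:tau-fixes-idempotents}, $\tau(x)\in A$ satisfies $\tau(\tau(x))=\tau(x)$, so $\tau(x)\in\layer{A}$. By Lemma~\ref{lem:basic_tau}\eqref{item:local-unit-equalities}, $\tau(x)$ is a two-sided local unit of $x$ lying in $\layer{A}$. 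Every right local unit $z\in\layer{A}$ of $x$ is in particular a right local unit in $S$, so $z\le\tau_r(x)=\tau(x)$; the left case is the same. Hence $\tau(x)$ is the greatest right and the greatest left local unit of $x$ within $\layer{A}$. So $\layer{A}$ is local-unit-complete, the two maxima coincide, and $\tau_{\layer{A}}(x)=\tau(x)$.

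It remains to check positivity of $\tau(x)$ relative to $\layer{A}$. This is immediate: positivity in $S$ means $y\le y\tau(x)$ and $y\le\tau(x)y$ for all $y\in S$, and in particular for all $y\in\layer{A}$. The order on $\layer{A}$ is the restricted one, so these inequalities persist. Thus $\layer{A}$ is local-unit-aligned with $\tau_{\layer{A}}=\tau|_{\layer{A}}$.

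For $\PosId(\layer{A})=A$, we argue both inclusions. If $u\in A$, then $u\in\layer{A}$ as above, and $u$ is idempotent and positive in $S$, hence positive relative to the subset $\layer{A}$. Conversely, suppose $u\in\layer{A}$ is idempotent and positive relative to $\layer{A}$. Positivity is only known against elements of $\layer{A}$, so we avoid using ambient positivity directly. Instead, apply Lemma~\ref{lem:basic_tau}\eqref{item:tau-fixes-idempotents} inside the local-unit-aligned semigroup $\layer{A}$: this gives $\tau_{\layer{A}}(u)=u$. By the first part, $\tau_{\layer{A}}(u)=\tau(u)\in A$. Therefore $u=\tau(u)\in A$.

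This converse inclusion is the only delicate point: a priori a positive idempotent of the component need not be positive in $S$. Routing the argument through the intrinsic local-unit map settles it without ambient positivity. The final sentence of the statement follows because $\Cm(\mathbf S)$ is $\tau$-stable by Theorem~\ref{thm:finest-coherent} and Lemma~\ref{lem:coherent-implies-stable}.
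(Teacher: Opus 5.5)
Your proof is correct, but it orders the argument differently from the paper.

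The paper first proves $\PosId(\layer{A})=A$ directly. For an intrinsically positive idempotent $r$, it tests intrinsic positivity against $p=\tau_{\mathbf S}(r)\in\layer{A}$ to get $p\le pr=r$. It then uses isotonicity ($z\le zp\le zr$ and $z\le pz\le rz$) to show that $r$ is positive in all of $\mathbf S$. Finally it concludes $r=\tau_{\mathbf S}(r)\in A$ by the ambient Lemma~\ref{lem:basic_tau}\eqref{item:tau-fixes-idempotents}. Only afterwards does it identify the intrinsic local units, using $A=\PosId(\layer{A})$ to certify that $\tau_{\mathbf S}(x)$ is an intrinsic positive idempotent.

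You instead observe that intrinsic positivity of $\tau(x)$ is just the restriction of ambient positivity. This lets you establish local-unit alignment and $\tau_{\layer{A}}=\tau_{\mathbf S}|_{\layer{A}}$ first, and then obtain the converse inclusion by applying the same lemma inside $\layer{A}$. That step is legitimate, since your first part shows $\layer{A}$ is itself a local-unit-aligned ordered semigroup.

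Your route is also slightly more economical. The paper's detour through ambient positivity is not actually needed: $\tau(r)\le r$ together with $r\le\tau(r)$ (as $r^2=r$) already gives $r=\tau(r)\in A$.

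The extra fact the paper's route makes explicit, that an idempotent of $\layer{A}$ is intrinsically positive if and only if it is ambiently positive, follows from yours a posteriori, because $u\in A\subseteq\PosId(\mathbf S)$. So your remark that such an element ``need not be positive in $S$'' is only an a priori caution, not an actual possibility.
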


\begin{proof}
By Proposition~\ref{prop:tau-stable-gives-subsemigroup}, $\layer{A}$ is an ordered subsemigroup of $\mathbf S$.

First we identify its intrinsic positive idempotents.
If $p\in A$, then $p\in\PosId(\mathbf S)$, and Lemma~\ref{lem:basic_tau}\eqref{item:tau-fixes-idempotents} gives $\tau_{\mathbf S}(p)=p$, so $p\in\layer{A}$.
Since $p$ is ambient positive, it is positive after restricting to $\layer{A}$.
Thus $A\subseteq\PosId(\layer{A})$.

Conversely, let $r\in\PosId(\layer{A})$, where positivity is computed intrinsically in $\layer{A}$.
Put $p:=\tau_{\mathbf S}(r)$.
Since $r\in\layer{A}$, we have $p\in A$, hence $p\in\layer{A}$.
Intrinsic positivity of $r$ applied to $p$ gives
\[
p\le pr
\qquad\text{and}\qquad
p\le rp.
\]
But $p=\tau_{\mathbf S}(r)$ is a two-sided ambient local unit for $r$, so $pr=r=rp$.
Hence $p\le r$.

Now let $z\in S$ be arbitrary.
Since $p\in\PosId(\mathbf S)$, ambient positivity gives $z\le zp$ and $z\le pz$.
Since $p\le r$, isotonicity gives $zp\le zr$ and $pz\le rz$.
Therefore
\[
z\le zr
\qquad\text{and}\qquad
z\le rz.
\]
Thus $r$ is ambient positive.
As $r$ is also idempotent, $r\in\PosId(\mathbf S)$.
By Lemma~\ref{lem:basic_tau}\eqref{item:tau-fixes-idempotents}, $\tau_{\mathbf S}(r)=r$.
Since $r\in\layer{A}$, this gives $r\in A$.
Therefore $\PosId(\layer{A})=A$.

It remains to identify the intrinsic local-unit map.
Let $x\in\layer{A}$, and put $p:=\tau_{\mathbf S}(x)$.
Then $p\in A=\PosId(\layer{A})$, and $xp=x=px$, so $p$ is both an intrinsic right local unit and an intrinsic left local unit of $x$.
If $e\in\layer{A}$ and $xe=x$, then $e$ is an ambient right local unit of $x$, whence $e\le\tau_{\mathbf S}(x)=p$.
Thus $p$ is the greatest intrinsic right local unit of $x$.
The same argument with $ex=x$ shows that $p$ is also the greatest intrinsic left local unit of $x$.
Since $p\in\PosId(\layer{A})$, the restricted semigroup $\layer{A}$ is local-unit-aligned and $\tau_{\layer{A}}(x)=\tau_{\mathbf S}(x)$ for every $x\in\layer{A}$.
\end{proof}

\begin{definition}[Lower-pointed blocks]
\label{def:lower-pointed-block}
A block $A$ of a partition of $\PosId(\mathbf S)$ is called \emph{lower-pointed} if it has a least element.
Since every member of $A$ is a positive idempotent, this least element is also called the least positive idempotent of $A$ and is denoted by $e_A$.
\end{definition}

\begin{proposition}[Lower-pointed blocks and component monoids]
\label{prop:lower-pointed-block-component-monoid}
Let $A$ be a block of a $\tau$-stable partition.
Then $A$ is lower-pointed if and only if the component $\layer{A}$ is a monoid.
In that case the identity of $\layer{A}$ is the least positive idempotent $e_A$ of $A$.
\end{proposition}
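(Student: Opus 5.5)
We prove the two implications separately, working inside the component $\layer{A}$. By Proposition~\ref{prop:components-inherit-local-unit-alignment}, $\layer{A}$ is a local-unit-aligned ordered semigroup with $\PosId(\layer{A})=A$ and $\tau_{\layer{A}}=\tau_{\mathbf S}|_{\layer{A}}$. This lets us move freely between intrinsic and ambient statements.

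First assume that $A$ has a least element $e_A$. By Lemma~\ref{lem:basic_tau}\eqref{item:tau-fixes-idempotents}, $\tau(e_A)=e_A\in A$, so $e_A\in\layer{A}$. Take any $x\in\layer{A}$. Then $\tau(x)\in A$, so $e_A\le\tau(x)$. Lemma~\ref{lem:basic_tau}\eqref{item:below-tau-gives-unit}, applied with $u=e_A$, gives $xe_A=x=e_Ax$. Hence $e_A$ is a two-sided identity of $\layer{A}$, and $\layer{A}$ is a monoid.

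Conversely, assume $\layer{A}$ has an identity $e$. Since $e$ is an identity, it is idempotent. Moreover $e\cdot x=x$ for every $x\in\layer{A}$, so $e\le\tau(x)$ by maximality of the greatest local unit; this uses only that $e$ is an ambient local unit of $x$. The remaining steps are to show that $e$ lies in $A$ and is below every member of $A$:
\begin{enumerate}
\item Apply $e\le\tau(x)$ to $x=p\in A$. This is legitimate because $p\in\layer{A}$, and it gives $e\le p$ for all $p\in A$.
\item Show $e\in A$. The issue is that an identity need not be positive a priori. Intrinsically, positivity of $e$ in $\layer{A}$ is trivial, since $xe=x=ex$ gives $x\le xe$ and $x\le ex$ for $x\in\layer{A}$. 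So $e$ is an intrinsic positive idempotent of $\layer{A}$, and $\PosId(\layer{A})=A$ from Proposition~\ref{prop:components-inherit-local-unit-alignment} yields $e\in A$.
\end{enumerate}
Together these show that $e$ is the least element of $A$. So $A$ is lower-pointed with $e_A=e$, which also proves that the identity of $\layer{A}$ is $e_A$ (identities are unique).

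The only delicate point is the positivity of $e$ in the converse. We avoid proving ambient positivity by hand: the identification $\PosId(\layer{A})=A$ already contains that argument, since it shows that intrinsic positive idempotents are ambient positive. The rest is a direct application of Lemma~\ref{lem:basic_tau}.
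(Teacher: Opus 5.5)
Your proof is correct. The forward direction is the paper's argument, but the converse takes a somewhat different route.

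The paper shows that the identity $u_A$ lies in $A$ by a direct computation. Since $\tau(u_A)\in A\subseteq\layer{A}$, the identity law gives $u_A\tau(u_A)=\tau(u_A)$, while the local-unit property gives $u_A\tau(u_A)=u_A$. Hence $u_A=\tau(u_A)$, which is a positive idempotent by Lemma~\ref{lem:basic_tau}. Minimality then follows from positivity of each $p\in A$, namely $u_A\le u_Ap=p$.

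You reverse the order. You first obtain minimality from maximality of the greatest left local unit ($ep=p$ gives $e\le\tau(p)=p$), which does not require knowing $e\in A$. You then get $e\in A$ by observing that $e$ is trivially an intrinsic positive idempotent of $\layer{A}$ and invoking $\PosId(\layer{A})=A$ from Proposition~\ref{prop:components-inherit-local-unit-alignment}. That proposition precedes this one and does not depend on it, so the citation is legitimate.

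What each route buys: the paper's argument uses only Lemma~\ref{lem:basic_tau}. It also shows that the positivity concern you raise never arises, because the identity coincides with its own greatest local unit $\tau(u_A)$. Your argument hands that concern to the earlier identification of intrinsic positive idempotents, whose proof carries out a comparable computation with $\tau_{\mathbf S}(r)$.
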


\begin{proof}
Suppose first that $A$ is lower-pointed.
For $x\in\layer{A}$, we have $e_A\le\tau(x)$, and Lemma~\ref{lem:basic_tau}\eqref{item:below-tau-gives-unit} gives
\[
xe_A=x=e_Ax.
\]
Thus $e_A$ is the identity of $\layer{A}$.

Conversely, suppose that $\layer{A}$ is a monoid with identity $u_A$.
Since $u_A\in\layer{A}$, also $\tau(u_A)\in A\subseteq\layer{A}$.
The identity law gives $u_A\tau(u_A)=\tau(u_A)$, while $\tau(u_A)$ is a local unit for $u_A$, so $u_A\tau(u_A)=u_A$.
Hence $u_A=\tau(u_A)\in A$.
For every $p\in A$, positivity of $p$ and the identity law in $\layer{A}$ give
\[
u_A\le u_Ap=p.
\]
Therefore $u_A=\min A$, and $A$ is lower-pointed.
\end{proof}

\begin{remark}[Block minima and component minima]
\label{rem:block-minimum-versus-component-minimum}
Lower-pointedness concerns the order on the positive-idempotent block $A$.
It is equivalent to the existence of an identity in $\layer{A}$, but it does not assert that this identity is the least element of the whole component $\layer{A}$.
The latter is a strictly stronger condition, as Example~\ref{ex:skeleton-oriented-identity-not-component-least-lattice} shows.
Thus the component monoids of the finite direct-system decomposition in \cite{JeneiLUARepresentation} arise from lower-pointed blocks, whereas no lower-pointedness assumption is needed for the resolved-transport decomposition below.
\end{remark}

\begin{lemma}[Ambient component calculus]\label{lem:component-calculus}
Let $\mathcal P$ be a $\tau$-stable partition of $\PosId(\mathbf S)$, and let $A\in\mathcal P$.
For all $U,V\subseteq A$, the following hold:
\begin{enumerate}
\item $\layer{U}\subseteq\layer{A}$, and $\layer{U}=\{x\in\layer{A}:\tau_{\mathbf S}(x)\in U\}$;
\item $\layer{U}\cdot\layer{V}\subseteq\layer{A}$;
\item the ambient $\tau$-saturated product satisfies
\[
\tprod{U}{V}=\tau_{\mathbf S}(\layer{U}\cdot\layer{V}),
\]
and the product on the right may be taken inside the restricted ordered subsemigroup $\layer{A}$.
\end{enumerate}
By Proposition~\ref{prop:components-inherit-local-unit-alignment}, the intrinsic local-unit map on $\layer{A}$ is the restriction of the ambient map $\tau_{\mathbf S}$.
\end{lemma}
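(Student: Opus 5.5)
The three items follow directly from the definitions together with $\tau$-stability of $A$ (Proposition~\ref{prop:tau-stable-gives-subsemigroup}), so I will simply unwind them in order.

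For item (1), let $x\in\layer{U}$. Then $\tau_{\mathbf S}(x)\in U\subseteq A$, hence $x\in\layer{A}$. The description $\layer{U}=\{x\in\layer{A}:\tau_{\mathbf S}(x)\in U\}$ then holds by definition: the inclusion from left to right is what was just shown, and the reverse inclusion is immediate because membership in $\layer{U}$ depends only on $\tau_{\mathbf S}(x)$.

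For item (2), item (1) gives $\layer{U}\cdot\layer{V}\subseteq\layer{A}\cdot\layer{A}$. By Proposition~\ref{prop:tau-stable-gives-subsemigroup}, $\layer{A}\cdot\layer{A}\subseteq\layer{A}$. Equivalently, $\tau(xy)\in\tprod{A}{A}=A$ for $x,y\in\layer{A}$.

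For item (3), the displayed identity is the definition of the ambient $\tau$-saturated product. It remains to explain the clause that the product may be taken inside $\layer{A}$. Since $\layer{A}$ carries the restricted multiplication, the product $xy$ of $x\in\layer{U}$ and $y\in\layer{V}$ computed in $\layer{A}$ is the same element as the ambient product, and by item (2) it lies in $\layer{A}$. Moreover, by item (1), the sets $\layer{U}$ and $\layer{V}$ formed in $\mathbf S$ coincide with the corresponding preimages formed inside $\layer{A}$. Finally, Proposition~\ref{prop:components-inherit-local-unit-alignment} gives $\tau_{\layer{A}}=\tau_{\mathbf S}|_{\layer{A}}$. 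Hence applying either map to $\layer{U}\cdot\layer{V}$ yields the same set, and the saturated product computed intrinsically in $\layer{A}$ equals the ambient $\tprod{U}{V}$.

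The only step requiring care is this last identification: one must check that both the layers $\layer{U},\layer{V}$ and the local-unit map $\tau$ are the same whether computed in $\mathbf S$ or intrinsically in $\layer{A}$. Both facts are supplied by item (1) and Proposition~\ref{prop:components-inherit-local-unit-alignment}, so I expect no genuine obstacle. The closing sentence of the lemma is a restatement of that proposition.
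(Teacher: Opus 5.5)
Your proposal is correct and follows the paper's proof: item (1) comes from the definition of layers, item (2) from Proposition~\ref{prop:tau-stable-gives-subsemigroup}, and item (3) is the defining formula for $\tprod{U}{V}$ combined with the first two items. Your explicit appeal to Proposition~\ref{prop:components-inherit-local-unit-alignment}, to identify the intrinsically computed saturated product with the ambient one, spells out what the paper leaves to the lemma's closing sentence.
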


\begin{proof}
The first item is immediate from the definition of layers.
For the second, if $x\in\layer{U}$ and $y\in\layer{V}$, then $x,y\in\layer{A}$, and Proposition~\ref{prop:tau-stable-gives-subsemigroup} gives $xy\in\layer{A}$.
The third item is the defining ambient formula for $\tprod{U}{V}$, together with the first two items.
\end{proof}

\begin{theorem}\label{thm:Ctau-characterized-by-components}
Let $\mathcal P$ be a $\tau$-stable partition of $\PosId(\mathbf S)$.
Then $\mathcal P=\Ctau(\mathbf S)$ if and only if, for every $A\in\mathcal P$, the component ordered semigroup $\layer{A}$ is intrinsically $\tau$-cohesive; equivalently,
\[
\Ctau(\layer{A})=\{A\},
\]
where $\PosId(\layer{A})=A$ and the intrinsic local-unit map on $\layer{A}$ is the restriction of the ambient $\tau$-map.
\end{theorem}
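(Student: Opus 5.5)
The plan is to reduce everything to a comparison between the ambient closure and the intrinsic closure on each block. By Lemma~\ref{lem:component-calculus}, for $U,V\subseteq A$ the intrinsic $\tau$-saturated product computed in $\layer{A}$ coincides with the ambient one: intrinsic layers of subsets of $A$ are the ambient layers $\layer{U}$, and the intrinsic local-unit map is the restriction of $\tau_{\mathbf S}$. Consequently the graph used to define $\CCtau$ on a partition of $A$ is the same whether it is computed in $\layer{A}$ or in $\mathbf S$. Hence a partition of $A$ is intrinsically $\tau$-stable exactly when each of its blocks $U$ satisfies $\tprod{U}{U}=U$ ambiently. This is the key transfer observation, and I would state it first as a lemma.

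For the forward direction, assume $\mathcal P=\Ctau(\mathbf S)$ and fix $A\in\mathcal P$. Suppose $\Ctau(\layer{A})$ is a partition $\mathcal R_A$ of $A$ with more than one block. By the transfer observation, each block of $\mathcal R_A$ is ambiently $\tau$-stable. Form the partition $\mathcal P'$ of $\PosId(\mathbf S)$ obtained by replacing $A$ with the blocks of $\mathcal R_A$ and keeping the other blocks of $\mathcal P$. This $\mathcal P'$ is $\tau$-stable: $\tau$-stability is a blockwise condition $\tprod{B}{B}=B$ that does not involve other blocks. It strictly refines $\mathcal P$, which contradicts Theorem~\ref{thm:finest-tau-stable}. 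So $\Ctau(\layer{A})=\{A\}$.

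For the converse, assume every $\layer{A}$ is intrinsically $\tau$-cohesive. By Theorem~\ref{thm:finest-tau-stable}, $\Ctau(\mathbf S)$ refines $\mathcal P$, so for each $A\in\mathcal P$ the blocks of $\Ctau(\mathbf S)$ contained in $A$ form a partition $\mathcal R_A$ of $A$. Each of these blocks is ambiently $\tau$-stable, hence $\mathcal R_A$ is an intrinsically $\tau$-stable partition of $\PosId(\layer{A})=A$ (Proposition~\ref{prop:components-inherit-local-unit-alignment}). Applying Theorem~\ref{thm:finest-tau-stable} inside the local-unit-aligned semigroup $\layer{A}$, $\Ctau(\layer{A})=\{A\}$ refines $\mathcal R_A$. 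This forces $\mathcal R_A=\{A\}$, so $A$ is itself a block of $\Ctau(\mathbf S)$, and therefore $\mathcal P=\Ctau(\mathbf S)$.

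The main obstacle is the transfer step: making sure that the intrinsic notions (positive idempotents, layers, $\tau$-saturated products) in $\layer{A}$ literally coincide with the ambient ones restricted to subsets of $A$. This is where Proposition~\ref{prop:components-inherit-local-unit-alignment} ($\PosId(\layer{A})=A$ and $\tau_{\layer{A}}=\tau_{\mathbf S}|_{\layer{A}}$) and Lemma~\ref{lem:component-calculus} are used. Once that is in place, both directions are short minimality arguments using Theorem~\ref{thm:finest-tau-stable}, once globally and once inside a component.
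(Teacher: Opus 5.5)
Your proposal is correct and follows the paper's proof essentially step for step. It uses the same transfer via Lemma~\ref{lem:component-calculus} and Proposition~\ref{prop:components-inherit-local-unit-alignment}, the same block-replacement argument against the fineness of $\Ctau(\mathbf S)$ in the forward direction, and the same restriction of $\Ctau(\mathbf S)$ to a block $A$ in the converse. The one small difference is that you explicitly apply Theorem~\ref{thm:finest-tau-stable} inside $\layer{A}$ to pass between ``$\Ctau(\layer{A})=\{A\}$'' and ``$A$ has no proper intrinsically $\tau$-stable refinement,'' a step the paper leaves implicit.
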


\begin{proof}
Assume first that $\mathcal P=\Ctau(\mathbf S)$, and let $A\in\mathcal P$.
Suppose that $A$ admits a proper refinement $\mathcal R$ which is $\tau$-stable under products taken inside $\layer{A}$.
Replace $A$ in $\mathcal P$ by the blocks of $\mathcal R$, and call the resulting partition $\mathcal Q$.
By Lemma~\ref{lem:component-calculus}, each block of $\mathcal R$ is also $\tau$-stable when viewed as a subset of $\PosId(\mathbf S)$.
All other blocks of $\mathcal Q$ are blocks of $\mathcal P=\Ctau(\mathbf S)$, hence are $\tau$-stable by Theorem~\ref{thm:finest-tau-stable}.
Thus $\mathcal Q$ is a $\tau$-stable partition of $\PosId(\mathbf S)$, and it properly refines $\Ctau(\mathbf S)$, contradicting the fineness of $\Ctau(\mathbf S)$.
Therefore no such proper refinement of $A$ exists.

Conversely, assume that every component $\layer{A}$, $A\in\mathcal P$, is intrinsically $\tau$-cohesive.
Since $\mathcal P$ is $\tau$-stable and $\Ctau(\mathbf S)$ is the finest $\tau$-stable partition, Theorem~\ref{thm:finest-tau-stable} gives that $\Ctau(\mathbf S)$ refines $\mathcal P$.
Fix $A\in\mathcal P$, and let
\[
\mathcal R_A:=\{C\in\Ctau(\mathbf S):C\subseteq A\}.
\]
Then $\mathcal R_A$ is a partition of $A$.
Each $C\in\mathcal R_A$ is $\tau$-stable in the ambient semigroup, and Lemma~\ref{lem:component-calculus} says that the same $\tau$-stable computation is obtained inside $\layer{A}$.
Hence $\mathcal R_A$ is a $\tau$-stable refinement of $A$ under products taken inside $\layer{A}$.
By the assumed $\tau$-cohesiveness of $\layer{A}$, this refinement is not proper, so $\mathcal R_A=\{A\}$.
Thus every block $A$ of $\mathcal P$ is already a block of $\Ctau(\mathbf S)$.
Since $\Ctau(\mathbf S)$ refines $\mathcal P$, it follows that $\mathcal P=\Ctau(\mathbf S)$.
\end{proof}

The preceding three sections have now supplied the intrinsic pieces of the decomposition.
The partitions live on the positive-idempotent skeleton, while their layer preimages are ordered subsemigroups with the correct intrinsic local-unit map.
The next step is to explain how different components communicate.
This requires two additional pieces of data attached to a multiplication-coherent partition: the quotient join of blocks, and the family of resolved transports indexed by positive idempotents in the receiving block.

\section{Quotient skeletons and resolved transports for coherent partitions}
\label{sec:quotient-skeleton-stage}

Throughout this section, $\mathbf S$ is local-unit-aligned.
A partition $\mathcal P$ of $\PosId(\mathbf S)$ induces the layer decomposition
\[
\mathcal L(\mathcal P):=\{\layer{A}:A\in\mathcal P\},
\qquad
\layer{A}=\tau^{-1}(A).
\]

\begin{definition}[The quotient skeleton of a coherent partition]
\label{def:quotient-skeleton}
Let $\mathcal P$ be a $\tau$-multiplication-coherent partition of $\PosId(\mathbf S)$.
For $A,B\in\mathcal P$, the set $\operatorname{Out}_{\mathcal P}(A,B)$ from Definition~\ref{def:m-output-blocks} is a singleton.
Let
\[
A\vee_{\mathcal P}B
\]
be its unique element; equivalently, $A\vee_{\mathcal P}B$ is the unique block of $\mathcal P$ containing $\tprod{A}{B}$.
Define
\[
A\le_{\mathcal P}B
\qquad\Longleftrightarrow\qquad
A\vee_{\mathcal P}B=B.
\]
We call $(\mathcal P,\le_{\mathcal P},\vee_{\mathcal P})$ the \emph{quotient skeleton} of $\mathcal P$.
The partition $\mathcal P$ is called \emph{block-linear} if $\le_{\mathcal P}$ is a total order.
No totality assumption is imposed unless block-linearity is stated explicitly.
\end{definition}

\begin{lemma}[Semilattice structure of a coherent quotient]
\label{lem:coherent-quotient-semilattice}
Let $\mathcal P$ be a $\tau$-multiplication-coherent partition of $\PosId(\mathbf S)$.
Then $\vee_{\mathcal P}$ is idempotent, associative, and commutative.
Consequently, $\le_{\mathcal P}$, defined by $A\le_{\mathcal P}B$ if and only if $A\vee_{\mathcal P}B=B$, is the join-semilattice order on $\mathcal P$.

Moreover, if $A,B\in\mathcal P$, $x\in\layer{A}$, and $y\in\layer{B}$, then
\[
xy\in\layer{A\vee_{\mathcal P}B}.
\]
In particular, if $A\le_{\mathcal P}B$, $q\in B$, and $x\in\layer{A}$, then $xq\in\layer{B}$.
\end{lemma}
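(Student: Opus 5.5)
The plan is to read everything off the characterization of $A\vee_{\mathcal P}B$ as the unique block of $\mathcal P$ containing $\tprod{A}{B}$. Choosing representatives $p\in A$ and $q\in B$ then lets us compute joins using positive idempotents, where Lemma~\ref{lem:idempotents-max} gives the algebra needed.

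\emph{The product claim.} This comes first, since it is immediate. For $x\in\layer{A}$ and $y\in\layer{B}$ we have $\tau(xy)\in\tprod{A}{B}\subseteq A\vee_{\mathcal P}B$, so $xy\in\layer{A\vee_{\mathcal P}B}$. In particular, if $A\le_{\mathcal P}B$, $q\in B$ and $x\in\layer{A}$, then $q\in\layer{B}$ by Lemma~\ref{lem:basic_tau}\eqref{item:tau-fixes-idempotents}. Hence $xq\in\layer{A\vee_{\mathcal P}B}=\layer{B}$.

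\emph{Idempotence and commutativity.} For idempotence, take $p\in A$. Then $p=\tau(pp)\in\tprod{A}{A}$, so $A\vee_{\mathcal P}A$ meets $A$ and therefore equals $A$. For commutativity, take $p\in A$ and $q\in B$. The element $\tau(pq)=pq$ lies in $A\vee_{\mathcal P}B$, and $\tau(qp)=qp$ lies in $B\vee_{\mathcal P}A$. Since $pq=qp$ by Proposition~\ref{prop:positive-idempotents-central}, the two blocks share an element and so coincide.

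\emph{Associativity.} This is the step needing some care, and I will handle it with idempotent representatives. Take $p\in A$, $q\in B$, $r\in C$. Set $D:=A\vee_{\mathcal P}B$, so $pq=\tau(pq)\in D$, with $pq\in\PosId(\mathbf S)$ by Lemma~\ref{lem:idempotents-max}. Since $r\in\layer{C}$, we get $(pq)r=\tau((pq)r)\in\tprod{D}{C}\subseteq (A\vee_{\mathcal P}B)\vee_{\mathcal P}C$. Symmetrically, $p(qr)\in A\vee_{\mathcal P}(B\vee_{\mathcal P}C)$. Associativity of multiplication in $\mathbf S$ then shows the two blocks share an element, so they are equal.

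\emph{The order.} Once the three identities hold, the standard argument shows that $A\le_{\mathcal P}B\iff A\vee_{\mathcal P}B=B$ is a partial order. Reflexivity comes from idempotence and antisymmetry from commutativity. For transitivity, $A\vee C=A\vee(B\vee C)=(A\vee B)\vee C=B\vee C=C$. Moreover $A\vee_{\mathcal P}B$ is the least upper bound: it is an upper bound by associativity and idempotence, and if $A,B\le C$ then $(A\vee B)\vee C=A\vee(B\vee C)=A\vee C=C$.

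The only point needing attention is that the witnesses used are genuinely in the relevant layers. This is guaranteed because products of positive idempotents are positive idempotents fixed by $\tau$.
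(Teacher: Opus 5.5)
Your proof is correct and follows the paper's argument essentially step for step: the layer formula comes from the uniqueness of the output block, commutativity from $pq=qp$ for representatives $p\in A$, $q\in B$, associativity from placing a single product in both bracketings' join blocks, and the order claim from the standard natural-order argument. The differences are cosmetic. You obtain idempotence directly from $p=\tau(p^2)$ instead of citing $\tau$-stability via Lemma~\ref{lem:coherent-implies-stable}, which rests on the same observation. You also run associativity with positive-idempotent representatives rather than arbitrary layer elements.
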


\begin{proof}
By Lemma~\ref{lem:coherent-implies-stable}, the partition $\mathcal P$ is $\tau$-stable.
Hence $A\vee_{\mathcal P}A=A$, proving idempotence.

If $x\in\layer{A}$ and $y\in\layer{B}$, then $\tau(xy)\in\tprod{A}{B}$, and by definition $A\vee_{\mathcal P}B$ is the unique block of $\mathcal P$ meeting $\tprod{A}{B}$.
Thus $xy\in\layer{A\vee_{\mathcal P}B}$.

For associativity, choose $x\in\layer{A}$, $y\in\layer{B}$, and $z\in\layer{C}$.
The preceding paragraph applied to $(xy)z$ puts $\tau((xy)z)$ in $(A\vee_{\mathcal P}B)\vee_{\mathcal P}C$, while the same paragraph applied to $x(yz)$ puts $\tau(x(yz))$ in $A\vee_{\mathcal P}(B\vee_{\mathcal P}C)$.
Since $(xy)z=x(yz)$, these two blocks are equal.

For commutativity, choose $p\in A$ and $q\in B$.
Then $p,q\in\PosId(\mathbf S)$, so Proposition~\ref{prop:positive-idempotents-central} and Lemma~\ref{lem:idempotents-max} give $pq=qp\in\PosId(\mathbf S)$.
Also $p\in\layer{A}$, $q\in\layer{B}$, and $\tau(pq)=pq$.
Thus $pq$ lies in both $A\vee_{\mathcal P}B$ and $B\vee_{\mathcal P}A$, so these blocks are equal.

The standard natural-order argument for a commutative idempotent associative operation gives that $A\le_{\mathcal P}B$ if and only if $A\vee_{\mathcal P}B=B$ is the corresponding join-semilattice order.
The final assertion follows from $q\in\layer{B}$ and the already proved layer formula.
\end{proof}

\begin{definition}[Resolved transports for a coherent partition]
\label{def:resolved-transports-for-partition}
Let $\mathcal P$ be a $\tau$-multiplication-coherent partition of $\PosId(\mathbf S)$.
If $A\le_{\mathcal P}B$, $q\in B$, and $x\in\layer{A}$, define
\[
\lambda^{A,B}_{q}(x):=xq.
\]
By Lemma~\ref{lem:coherent-quotient-semilattice}, this element lies in $\layer{B}$ whenever $A\le_{\mathcal P}B$.
\end{definition}

\begin{theorem}[Product recovery from resolved transports]
\label{thm:resolved-product-recovery}
Let $\mathcal P$ be a $\tau$-multiplication-coherent partition of $\PosId(\mathbf S)$.
Let $A,B\in\mathcal P$, let $x\in\layer{A}$, and let $y\in\layer{B}$.
Put $C:=A\vee_{\mathcal P}B$ in the quotient skeleton of $\mathcal P$.
Then
\[
xy=
\min\{\lambda^{A,C}_{q}(x)\lambda^{B,C}_{q}(y):q\in C\}.
\]
In particular, the quotient skeleton, the induced component ordered semigroups, and the resolved transports of $\mathcal P$ determine the ambient multiplication of $\mathbf S$.
Moreover, in the comparable cases this specializes to the one-sided formulas
\[
xy=
\min\{\lambda^{A,B}_{q}(x)y:q\in B\}
\qquad\text{if }A\le_{\mathcal P}B,
\]
and
\[
xy=
\min\{x\lambda^{B,A}_{q}(y):q\in A\}
\qquad\text{if }B\le_{\mathcal P}A.
\]
\end{theorem}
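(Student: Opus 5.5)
We need to show two things: $xy$ belongs to the set on the right, and it lies below every member of that set. The membership is the key step. With $C=A\vee_{\mathcal P}B$, the candidate witness is $q:=\tau(xy)$. By Lemma~\ref{lem:coherent-quotient-semilattice}, $xy\in\layer{C}$, so $q\in C$. Since $q$ is a positive idempotent, Proposition~\ref{prop:positive-idempotents-central} lets us rearrange:
\[
\lambda^{A,C}_{q}(x)\lambda^{B,C}_{q}(y)=xq\,yq=xy\,q^2=xyq .
\]
By Lemma~\ref{lem:basic_tau}\eqref{item:local-unit-equalities}, $xyq=xy\tau(xy)=xy$. Hence $xy$ is attained in the family. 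Before using $\lambda^{A,C}_q$ and $\lambda^{B,C}_q$, we check they are defined, i.e.\ $A\le_{\mathcal P}C$ and $B\le_{\mathcal P}C$. This holds because $A\vee_{\mathcal P}(A\vee_{\mathcal P}B)=A\vee_{\mathcal P}B$ by associativity and idempotence from Lemma~\ref{lem:coherent-quotient-semilattice}.

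For the lower bound, take an arbitrary $q\in C$. The same central rearrangement gives $xq\,yq=xyq$. Positivity of $q$ gives $xy\le xyq$. So every member of the family lies above $xy$, and $xy$ is the least element, i.e.\ the minimum.

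The one-sided formulas are specializations. If $A\le_{\mathcal P}B$ then $C=B$. For $q\in B$ and $y\in\layer{B}$, we would like $\lambda^{B,B}_q(y)=yq$ replaced by $y$; however $yq$ need not equal $y$ unless $q\le\tau(y)$. So we argue directly rather than substituting. The set $\{xq\,y:q\in B\}$ contains $xy$ at $q=\tau(xy)\in B$, since $x\tau(xy)y=xy\tau(xy)=xy$ by centrality. Each element satisfies $xqy=xyq\ge xy$ by positivity. The case $B\le_{\mathcal P}A$ is symmetric.

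The "in particular" determination claim then follows. The right-hand side is computed from the join in the quotient skeleton, the transports, and products inside the component $\layer{C}$, which is an ordered subsemigroup by Proposition~\ref{prop:tau-stable-gives-subsemigroup}; its order is the restricted order. The only delicate point I expect is this bookkeeping: confirming that all quantities, and the minimum, are taken inside $\layer{C}$. This holds since $xq\,yq\in\layer{C}$.
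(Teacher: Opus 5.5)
Your proposal is correct and follows essentially the paper's argument: centrality and idempotence give $\lambda^{A,C}_q(x)\lambda^{B,C}_q(y)=xyq\ge xy$ for every $q\in C$, and the witness $q=\tau(xy)\in C$ shows the minimum is attained. The only difference is in the one-sided formulas, where the paper uses the witness $q=\tau(y)$ (resp.\ $q=\tau(x)$) instead of your $\tau(xy)$; both choices work, and your concern that $\lambda^{B,B}_q(y)\neq y$ is harmless in any case, since $(xq)(yq)=(xq)y$ by centrality and idempotence of $q$.
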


\begin{proof}
Since $A\le_{\mathcal P}C$ and $B\le_{\mathcal P}C$, the transports $\lambda^{A,C}_{q}(x)$ and $\lambda^{B,C}_{q}(y)$ are defined for every $q\in C$, and both transported elements lie in $\layer{C}$.

Let $q\in C$.
By Proposition~\ref{prop:positive-idempotents-central}, the positive idempotent $q$ is central.
Hence
\[
\lambda^{A,C}_{q}(x)\lambda^{B,C}_{q}(y)
=
(xq)(yq)
=
xyq.
\]
Since $q$ is positive, $xy\le xyq$.
Thus $xy$ is below every element in the displayed set.

On the other hand, $\tau$-multiplication coherence gives $xy\in\layer{A\vee_{\mathcal P}B}=\layer{C}$, so $\tau(xy)\in C$.
Taking $q=\tau(xy)$, we get $xyq=xy\tau(xy)=xy$.
Therefore $xy$ itself belongs to the displayed set and is its least element.

Assume now that $A\le_{\mathcal P}B$, so $C=B$.
For each $q\in B$, centrality gives
\[
\lambda^{A,B}_{q}(x)y=(xq)y=xyq,
\]
and positivity of $q$ gives $xy\le xyq$.
Taking $q=\tau(y)$ gives
\[
\lambda^{A,B}_{\tau(y)}(x)y=x\tau(y)y=xy.
\]
This proves the first one-sided formula.
The second is symmetric: if $B\le_{\mathcal P}A$, then for $q\in A$, positivity gives $xy\le (xq)y$, centrality gives $(xq)y=x(qy)=x\lambda^{B,A}_{q}(y)$, and taking $q=\tau(x)$ gives equality.

The quotient skeleton, the induced component ordered semigroups, and the resolved transports determine the displayed minima.
Hence they determine $xy$ for arbitrary $x,y\in S$, and therefore determine the ambient multiplication of $\mathbf S$.
\end{proof}

\begin{theorem}[Lower-to-higher order recovery from resolved transports]
\label{thm:resolved-order-recovery}
Let $\mathcal P$ be a $\tau$-multiplication-coherent partition of $\PosId(\mathbf S)$.
Let $A,B\in\mathcal P$, let $x\in\layer{A}$, and let $y\in\layer{B}$.
If $A\le_{\mathcal P}B$, then
\[
 x\le y
 \quad\Longleftrightarrow\quad
 \exists q\in B\ \text{such that}\ \lambda^{A,B}_{q}(x)\le y.
\]
Consequently, the quotient skeleton, the induced components, and the resolved transports of $\mathcal P$ determine all lower-to-higher comparisons.
\end{theorem}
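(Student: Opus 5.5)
The equivalence splits into two directions, each using only positivity, isotonicity, and the local-unit identities from Lemma~\ref{lem:basic_tau}.

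For the forward direction, assume $x\le y$ and take the witness $q:=\tau(y)$. Since $y\in\layer{B}$, we have $q\in B$, so $\lambda^{A,B}_{q}$ is defined by Definition~\ref{def:resolved-transports-for-partition}, and $\lambda^{A,B}_q(x)=xq$ lies in $\layer{B}$ by Lemma~\ref{lem:coherent-quotient-semilattice}. Right multiplication by $q$ is isotone, so
\[
xq\le yq=y\tau(y)=y
\]
by Lemma~\ref{lem:basic_tau}\eqref{item:local-unit-equalities}. Thus $\lambda^{A,B}_{\tau(y)}(x)\le y$.

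For the converse, suppose $\lambda^{A,B}_q(x)\le y$ for some $q\in B$. Since $q$ is a positive idempotent, it is right-inflationary, so $x\le xq=\lambda^{A,B}_q(x)$. Transitivity then gives $x\le y$. No use of the coherence of $\mathcal P$ is needed here beyond the fact that the transport is defined.

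Finally, the ``consequently'' clause needs a short remark. The right-hand side of the equivalence refers only to three pieces of data:
\begin{enumerate}
\item the relation $A\le_{\mathcal P}B$ in the quotient skeleton;
\item the maps $\lambda^{A,B}_q$ for $q\in B$;
\item the restricted order on the component $\layer{B}$, since both $\lambda^{A,B}_q(x)$ and $y$ lie in $\layer{B}$.
\end{enumerate}
The comparison $\lambda^{A,B}_q(x)\le y$ is therefore internal to the component ordered semigroup $\layer{B}$. Hence every comparison $x\le y$ with $x$ in a lower block and $y$ in a higher block is decided by the quotient skeleton, the components, and the resolved transports.

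There is no genuine obstacle. The only point requiring care is checking that the witness $\tau(y)$ really belongs to the receiving block $B$ and that the transported element lands in $\layer{B}$, so that the test is genuinely intrinsic to the resolved-transport data.
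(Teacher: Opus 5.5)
Your proof is correct and essentially the same as the paper's: the witness $q=\tau(y)$ with isotonicity for the forward direction, and positivity of $q$ with transitivity for the converse. Your remark that the test $\lambda^{A,B}_{q}(x)\le y$ is internal to $\layer{B}$, and hence determined by the resolved-transport data, likewise matches the paper's justification of the final clause.
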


\begin{proof}
Assume $A\le_{\mathcal P}B$.
If $x\le y$, then $\tau(y)\in B$, and isotonicity gives
\[
\lambda^{A,B}_{\tau(y)}(x)=x\tau(y)\le y\tau(y)=y.
\]
Conversely, if $\lambda^{A,B}_{q}(x)=xq\le y$ for some $q\in B$, then positivity of $q$ gives $x\le xq\le y$.
The equivalence is therefore an intrinsic lower-to-higher order test for the resolved transport system of $\mathcal P$.
\end{proof}

Applied to $\mathcal P=\Cm(\mathbf S)$, Theorem~\ref{thm:resolved-order-recovery} says that $\mathfrak R(\mathbf S)$ determines all lower-to-higher comparisons in the skeleton $\mathsf S_{\mathrm m}(\mathbf S)$.
Thus product recovery is unconditional, while order recovery is directional at the basic level.
The next section specializes the quotient skeleton and resolved transports to $\Cm(\mathbf S)$, and then records three order-side packages under which the remaining comparisons are also recovered from $\mathfrak R(\mathbf S)$.

\section{The canonical resolved-transport decomposition}
\label{sec:canonical-resolved-transport}

The partition $\Cm(\mathbf S)$ carries a natural block join.
For $A,B\in\Cm(\mathbf S)$, define $A\vee B$ to be the unique block of $\Cm(\mathbf S)$ containing $\tprod{A}{B}$.
This is well-defined by $\tau$-multiplication coherence.

\begin{definition}\label{def:tau-m-block-map}
For $x\in S$, let ${\mathcal C}_{\mathrm{m}}(x)$ denote the unique block of $\Cm(\mathbf S)$ containing $\tau(x)$.
Thus $x\in\layer{A}$ if and only if ${\mathcal C}_{\mathrm{m}}(x)=A$.
\end{definition}

\begin{lemma}[The block join on $\Cm(\mathbf S)$]\label{lem:m-block-product}
The operation $\vee$ on $\Cm(\mathbf S)$ is idempotent, associative, and commutative.
Moreover, for all $x,y\in S$,
\[
{\mathcal C}_{\mathrm{m}}(xy)={\mathcal C}_{\mathrm{m}}(x)\vee{\mathcal C}_{\mathrm{m}}(y).
\]
Equivalently, $\layer{A}\cdot\layer{B}\subseteq\layer{A\vee B}$ for all $A,B\in\Cm(\mathbf S)$.
\end{lemma}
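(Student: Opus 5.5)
This is a direct specialization of Lemma~\ref{lem:coherent-quotient-semilattice} to the partition $\mathcal P=\Cm(\mathbf S)$. The plan is to check that this lemma applies and that the two definitions of the join agree, and then to read off both assertions.

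First, by Theorem~\ref{thm:finest-coherent} the partition $\Cm(\mathbf S)$ is $\tau$-multiplication-coherent. So Definition~\ref{def:quotient-skeleton} and Lemma~\ref{lem:coherent-quotient-semilattice} apply with $\mathcal P=\Cm(\mathbf S)$. The join $\vee$ defined at the start of this section is the unique block of $\Cm(\mathbf S)$ containing $\tprod{A}{B}$. By Definition~\ref{def:m-output-blocks} this is exactly the unique element of $\operatorname{Out}_{\Cm(\mathbf S)}(A,B)$, so $\vee=\vee_{\Cm(\mathbf S)}$. Idempotence, associativity and commutativity are then those stated in Lemma~\ref{lem:coherent-quotient-semilattice}.

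For the layer formula, let $x,y\in S$ and put $A:={\mathcal C}_{\mathrm m}(x)$ and $B:={\mathcal C}_{\mathrm m}(y)$. Then $x\in\layer{A}$ and $y\in\layer{B}$ by Definition~\ref{def:tau-m-block-map}. Hence $\tau(xy)\in\tprod{A}{B}\subseteq A\vee B$, so ${\mathcal C}_{\mathrm m}(xy)=A\vee B$. This is the second paragraph of the proof of Lemma~\ref{lem:coherent-quotient-semilattice} restated.

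The inclusion $\layer{A}\cdot\layer{B}\subseteq\layer{A\vee B}$ is the same statement quantified over all $x\in\layer{A}$ and $y\in\layer{B}$. Conversely, the inclusion gives back the pointwise formula, because every $x$ lies in $\layer{{\mathcal C}_{\mathrm m}(x)}$.

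There is no real obstacle. The only point needing care is the identification of the section's $\vee$ with $\vee_{\mathcal P}$, which is immediate from the definitions.
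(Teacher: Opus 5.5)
Your proposal is correct and follows the paper's proof: both specialize Lemma~\ref{lem:coherent-quotient-semilattice} to $\mathcal P=\Cm(\mathbf S)$, which is coherent by Theorem~\ref{thm:finest-coherent}. Both then identify $\vee$ with $\vee_{\mathcal P}$ and read off the semilattice laws and the formula ${\mathcal C}_{\mathrm m}(xy)={\mathcal C}_{\mathrm m}(x)\vee{\mathcal C}_{\mathrm m}(y)$; the paper leaves implicit your remark that the two formulations are equivalent because every $x$ lies in $\layer{{\mathcal C}_{\mathrm m}(x)}$.
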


\begin{proof}
Apply Lemma~\ref{lem:coherent-quotient-semilattice} to $\mathcal P=\Cm(\mathbf S)$.
The operation $\vee$ defined above is exactly the quotient operation $\vee_{\mathcal P}$ in that lemma.
Hence $\vee$ is idempotent, associative, and commutative.

Moreover, if $x\in\layer{A}$ and $y\in\layer{B}$, Lemma~\ref{lem:coherent-quotient-semilattice} gives
\[
 xy\in\layer{A\vee B}.
\]
Therefore the block containing $\tau(xy)$, namely ${\mathcal C}_{\mathrm{m}}(xy)$, is $A\vee B$.
Since $A={\mathcal C}_{\mathrm{m}}(x)$ and $B={\mathcal C}_{\mathrm{m}}(y)$, this gives
\[
{\mathcal C}_{\mathrm{m}}(xy)={\mathcal C}_{\mathrm{m}}(x)\vee{\mathcal C}_{\mathrm{m}}(y).
\]
The equivalent inclusion $\layer{A}\cdot\layer{B}\subseteq\layer{A\vee B}$ is the same statement in layer form.
\end{proof}

\begin{definition}\label{def:m-skeleton}
For a local-unit-aligned ordered semigroup $\mathbf S$, define
\[
A\le_{\rm m}B\quad\Longleftrightarrow\quad A\vee B=B
\]
for $A,B\in\Cm(\mathbf S)$.
The structure
\[
\mathsf S_{\mathrm m}(\mathbf S):=(\Cm(\mathbf S),\le_{\rm m},\vee)
\]
is called the \emph{canonical $\tau$-multiplication-coherent skeleton} of $\mathbf S$.
The semigroup $\mathbf S$ is called \emph{$\tau$-block-linear} if the order of
$\mathsf S_{\mathrm m}(\mathbf S)$ is a chain.
\end{definition}

Unless another coherent partition is explicitly under discussion, the terms
\emph{skeleton}, \emph{skeleton block}, and \emph{skeleton order} will henceforth
refer to the canonical skeleton $\mathsf S_{\mathrm m}(\mathbf S)$.

\begin{proposition}[The canonical $\tau$-multiplication-coherent skeleton]
\label{prop:m-skeleton-join}
For every local-unit-aligned ordered semigroup $\mathbf S$, the canonical $\tau$-multiplication-coherent skeleton $\mathsf S_{\mathrm m}(\mathbf S)$ is a join-semilattice.
Moreover, if $\mathbf S$ is positive-idempotent-linear, then $\mathbf S$ is $\tau$-block-linear.
In particular, every local-unit-aligned totally ordered semigroup is $\tau$-block-linear, but totality of the whole ambient order is not needed for this conclusion.
\end{proposition}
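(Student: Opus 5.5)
The join-semilattice assertion comes directly from earlier results. By Theorem~\ref{thm:finest-coherent}, $\Cm(\mathbf S)$ is $\tau$-multiplication-coherent. Lemma~\ref{lem:m-block-product}, which is Lemma~\ref{lem:coherent-quotient-semilattice} applied to $\mathcal P=\Cm(\mathbf S)$, shows that $\vee$ is idempotent, associative and commutative. The standard natural-order argument then shows three things. The relation $\le_{\rm m}$ defined by $A\vee B=B$ is a partial order. Each $A\vee B$ is an upper bound of $A$ and $B$, since $A\vee(A\vee B)=(A\vee A)\vee B=A\vee B$. And if $A,B\le_{\rm m}D$, then $(A\vee B)\vee D=A\vee(B\vee D)=A\vee D=D$. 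So $A\vee B$ is the least upper bound, and $\mathsf S_{\mathrm m}(\mathbf S)$ is a join-semilattice.

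For block-linearity, assume $\Sk(\mathbf S)$ is a chain and take $A,B\in\Cm(\mathbf S)$. Choose $p\in A$ and $q\in B$; by linearity we may assume $p\le q$. By Lemma~\ref{lem:idempotents-max}, $pq=q$. As in Definition~\ref{def:m-output-blocks}, $p\in\layer{A}$ and $q\in\layer{B}$, so $\tau(pq)=\tau(q)=q\in\tprod{A}{B}$ by Lemma~\ref{lem:basic_tau}\eqref{item:tau-fixes-idempotents}. Since $A\vee B$ is the unique block containing $\tprod{A}{B}$ and $q\in B$, we get $A\vee B=B$, that is, $A\le_{\rm m}B$. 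If instead $q\le p$, the same computation gives $A\vee B=A$, so $B\le_{\rm m}A$. Either way $A$ and $B$ are comparable, and $\le_{\rm m}$ is a chain.

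The final sentence is a special case. If $\mathbf S$ is totally ordered, its restriction to $\PosId(\mathbf S)$ is a chain, so $\mathbf S$ is positive-idempotent-linear. This set is nonempty, because $\tau(x)\in\PosId(\mathbf S)$ for any $x$ and $S$ is assumed nonempty. The preceding paragraph then gives $\tau$-block-linearity. The argument only used linearity of the skeleton, not totality of the ambient order, which justifies the closing remark of the statement.

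I do not expect a real obstacle. The one point to check is that comparing a single pair of representatives $p,q$ determines the block join, and this holds by multiplication-coherence, since the join block is fixed by any single witness in $\tprod{A}{B}$.
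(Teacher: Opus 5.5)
Your proposal is correct and matches the paper's proof step for step. The join-semilattice structure comes from Lemma~\ref{lem:m-block-product}, and block-linearity follows by choosing $p\in A$, $q\in B$ and using $pq=\max\{p,q\}=\tau(pq)\in\tprod{A}{B}$ to force $A\vee B\in\{A,B\}$; spelling out the natural-order argument and checking that $\PosId(\mathbf S)$ is nonempty (so the skeleton is a chain in the paper's sense) are harmless additions.
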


\begin{proof}
By Lemma~\ref{lem:m-block-product}, $\vee$ is associative, commutative, and idempotent.
Hence it is the join operation for the displayed natural order.
Now assume that $\Sk(\mathbf S)$ is a chain.
For $A,B\in\Cm(\mathbf S)$, choose $p\in A$ and $q\in B$.
By Lemma~\ref{lem:idempotents-max}, $pq=qp=\max\{p,q\}$.
Since $p\in\layer{A}$, $q\in\layer{B}$, and $pq$ is a positive idempotent, Lemma~\ref{lem:basic_tau} gives $pq=\tau(pq)\in\tprod{A}{B}$.
Thus $pq\in A\vee B$.
But $\max\{p,q\}\in A\cup B$, so $A\vee B$ is either $A$ or $B$.
Therefore any two skeleton blocks are comparable, proving $\tau$-block-linearity.
\end{proof}

\begin{definition}[Canonical resolved-transport homomorphisms]
\label{def:canonical-resolved-transport-homomorphisms}
Let $A,B\in\Cm(\mathbf S)$ with $A\le_{\rm m}B$, and let $q\in B$.
The \emph{canonical resolved-transport homomorphism} indexed by $q$ is the map
\[
\lambda^{A,B}_{q}\colon \layer{A}\to\layer{B},
\qquad
\lambda^{A,B}_{q}(x):=xq.
\]
The \emph{canonical resolved-transport system} of $\mathbf S$ is
\[
\mathfrak R(\mathbf S):=
\Bigl(
(\Cm(\mathbf S),\le_{\rm m},\vee),
\bigl(\layer{A},\le|_{\layer{A}},\cdot|_{\layer{A}}\bigr)_{A\in\Cm(\mathbf S)},
\bigl(\lambda^{A,B}_{q}\bigr)_{A\le_{\rm m}B,\ q\in B}
\Bigr).
\]
By Proposition~\ref{prop:components-inherit-local-unit-alignment}, the component $\layer{A}$ is intrinsically local-unit-aligned, with
\[
\PosId(\layer{A})=A
\qquad\text{and}\qquad
\tau_{\layer{A}}=\tau|_{\layer{A}}.
\]
Thus the index $q\in B$ is equivalently a positive idempotent of the target component $\layer{B}$.
\end{definition}

\begin{proposition}[Basic properties of the resolved transports in $\mathfrak R(\mathbf S)$]\label{prop:resolved-transport-basic}
Let $A,B,C\in\Cm(\mathbf S)$.
\begin{enumerate}
\item\label{item:ct-well-defined} If $A\le_{\rm m}B$, $q\in B$, and $x\in\layer{A}$, then $\lambda^{A,B}_{q}(x)\in\layer{B}$.
\item\label{item:ct-reflexive} If $x\in\layer{A}$, then $\lambda^{A,A}_{\tau(x)}(x)=x$.
\item\label{item:ct-lax-comp} If $A\le_{\rm m}B\le_{\rm m}C$, $q\in B$, $r\in C$, and $x\in\layer{A}$, then
\[
\lambda^{B,C}_{r}\bigl(\lambda^{A,B}_{q}(x)\bigr)
=
\lambda^{A,C}_{\lambda^{B,C}_{r}(q)}(x).
\]
\item\label{item:ct-hom} If $A\le_{\rm m}B$ and $q\in B$, then $\lambda^{A,B}_{q}\colon\layer{A}\to\layer{B}$ is an isotone semigroup homomorphism.
\item\label{item:ct-growing} If $A\le_{\rm m}B$, $p,q\in B$, and $p\le q$, then, for all $x\in\layer{A}$,
\[
\lambda^{A,B}_{p}(x)\le \lambda^{A,B}_{q}(x),
\qquad
\lambda^{A,B}_{q}(x)=\lambda^{A,B}_{p}(x)q.
\]
\item\label{item:ct-local-factor} If $A\le_{\rm m}B$, $y\in\layer{B}$, $p\in B$, and $p\le\tau(y)$, then, for all $x\in\layer{A}$,
\[
\lambda^{A,B}_{p}(x)y
=
\lambda^{A,B}_{\tau(y)}(x)y
=
xy.
\]
\end{enumerate}
\end{proposition}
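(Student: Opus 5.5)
All six items reduce to three ingredients: centrality of positive idempotents (Proposition~\ref{prop:positive-idempotents-central}), the join formula $pq=p\vee q$ in $\Sk(\mathbf S)$ (Lemma~\ref{lem:idempotents-max}), and the unit property of Lemma~\ref{lem:basic_tau}\eqref{item:below-tau-gives-unit}. The plan is to verify them one at a time, writing everything in terms of the ambient product, since $\lambda^{A,B}_q(x)=xq$ by definition.

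Item~\eqref{item:ct-well-defined} is the final assertion of Lemma~\ref{lem:coherent-quotient-semilattice} applied to $\mathcal P=\Cm(\mathbf S)$, using Lemma~\ref{lem:m-block-product}. For item~\eqref{item:ct-reflexive}, $A\le_{\rm m}A$ holds by idempotence of $\vee$, and $\tau(x)\in A$. Lemma~\ref{lem:basic_tau}\eqref{item:local-unit-equalities} then gives $x\tau(x)=x$.

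For item~\eqref{item:ct-lax-comp}, first note that $\lambda^{B,C}_r(q)=qr$ is a product of positive idempotents, so it lies in $\PosId(\mathbf S)$ by Lemma~\ref{lem:idempotents-max}. It also lies in $\layer{C}$ by item~\eqref{item:ct-well-defined}, hence in $C$, because $\tau(qr)=qr$. It follows that the right-hand side is defined, since $A\le_{\rm m}C$ holds by transitivity of the semilattice order. Both sides are equal to $(xq)r=x(qr)$ by associativity.

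For item~\eqref{item:ct-hom}, isotonicity is right multiplication by $q$. For the homomorphism property, centrality and idempotence give $(xy)q=xyqq=(xq)(yq)$. Item~\eqref{item:ct-growing} is similar: isotonicity applied to $p\le q$ gives $xp\le xq$. In addition, $pq=q$ by Lemma~\ref{lem:idempotents-max}, so $(xp)q=x(pq)=xq$.

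Item~\eqref{item:ct-local-factor} is the only step needing care, because the central factor must be absorbed into $y$. We have $\tau(y)\in B$, so $\lambda^{A,B}_{\tau(y)}$ is defined. Centrality gives $(xp)y=x(py)$, and Lemma~\ref{lem:basic_tau}\eqref{item:below-tau-gives-unit} with $p\le\tau(y)$ gives $py=y$. Hence $\lambda^{A,B}_p(x)y=xy$. Taking $p=\tau(y)$ gives the middle equality. I do not expect a genuine obstacle here; the main bookkeeping point is checking in item~\eqref{item:ct-lax-comp} that the transported index lands in $C$.
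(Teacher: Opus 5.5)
Your proposal is correct and follows the paper's proof essentially line by line. It uses centrality with idempotence for the homomorphism property, $pq=q$ for the monotonicity item, the check that $qr=\tau(qr)$ lands in $C$ for the composition law, and Lemma~\ref{lem:basic_tau}\eqref{item:below-tau-gives-unit} for the local-factor identity; one small slip is that $(xp)y=x(py)$ in the last item is just associativity, not centrality.
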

\begin{proof}
For item~\eqref{item:ct-well-defined}, let $q\in B$.
Since $q\in\PosId(\mathbf S)$, Lemma~\ref{lem:basic_tau}\eqref{item:tau-fixes-idempotents} gives $q\in\layer{B}$.
By Lemma~\ref{lem:m-block-product}, $xq\in\layer{A\vee B}=\layer{B}$.
Item~\eqref{item:ct-reflexive} follows from $x=x\tau(x)$ and $\tau(x)\in A$.
For item~\eqref{item:ct-lax-comp}, compute
$
\lambda^{B,C}_{r}\bigl(\lambda^{A,B}_{q}(x)\bigr)
=(xq)r=x(qr).
$
Also $\lambda^{B,C}_{r}(q)=qr$.
By Lemma~\ref{lem:idempotents-max}, $qr\in\PosId(\mathbf S)$, while
Lemma~\ref{lem:m-block-product} gives $qr\in\layer{C}$.
Hence $qr=\tau(qr)\in C$, so the right-hand side is
$\lambda^{A,C}_{\lambda^{B,C}_{r}(q)}(x)$.
For item~\eqref{item:ct-hom}, isotonicity is inherited from multiplication.
If $x,z\in\layer{A}$, then centrality and idempotence of $q$ give
$
\lambda^{A,B}_{q}(xz)=xzq=(xq)(zq)=\lambda^{A,B}_{q}(x)\lambda^{A,B}_{q}(z).
$
For item~\eqref{item:ct-growing}, if $p\le q$, then $xp\le xq$ by isotonicity.
Lemma~\ref{lem:idempotents-max} gives $pq=q$, and hence
$
\lambda^{A,B}_{p}(x)q=(xp)q=x(pq)=xq=\lambda^{A,B}_{q}(x).
$
For item~\eqref{item:ct-local-factor}, Lemma~\ref{lem:basic_tau}\eqref{item:below-tau-gives-unit} gives $yp=y=py$ whenever $p\le\tau(y)$.
Therefore
$
\lambda^{A,B}_{p}(x)y=(xp)y=x(py)=xy.
$
The same computation with $p=\tau(y)$ gives $\lambda^{A,B}_{\tau(y)}(x)y=xy$.
\end{proof}

The homomorphisms $\lambda^{A,B}_{q}$ are the primitive connecting homomorphisms of $\mathfrak R(\mathbf S)$.
For a fixed right factor $y\in\layer{B}$, the homomorphism indexed by the actual local unit $\tau(y)$ is sufficient to compute $xy$, but it is not unique for this purpose: every smaller positive idempotent $p\le\tau(y)$ gives the same product after multiplication by $y$.
Indeed, if
\[
D(p):=\{y\in\layer{B}:p\le\tau(y)\},
\]
then $p\le q$ implies $D(q)\subseteq D(p)$.
The price is that the transported elements themselves may differ; for $p\le q$, one has $\lambda^{A,B}_{p}(x)\le\lambda^{A,B}_{q}(x)$, and the larger transport is obtained from the smaller one by multiplying once more by $q$.

Applied to $\mathcal P=\Cm(\mathbf S)$,
Theorem~\ref{thm:resolved-product-recovery} shows that
$\mathfrak R(\mathbf S)$ determines the ambient multiplication.
In this specialization, $A\vee_{\mathcal P}B$ is the block join
$A\vee B$, and $A\le_{\mathcal P}B$ is the canonical skeleton order
$A\le_{\rm m}B$.

Likewise, Theorem~\ref{thm:resolved-order-recovery} shows that
$\mathfrak R(\mathbf S)$ determines every comparison from a lower
canonical block to a higher one.
This directional recovery is the strongest unconditional order statement:
the same canonical resolved-transport system may support different reverse
cross-block comparisons, and hence different ambient orders.
The following example isolates this obstruction already over a two-element
canonical skeleton.

\begin{example}[The canonical resolved-transport system does not determine the full ambient order]
\label{ex:strict-upper-shadow-ambiguity}
Let $\mathbb N=\{0,1,2,\ldots\}$, and put
\[
A=\{a_n:n\in\mathbb N\},
\qquad
B=\{b_n:n\in\mathbb N\},
\qquad
S=A\cup B.
\]
Define a commutative multiplication on $S$ by
\[
a_i a_j=a_{i+j},
\qquad
a_i b_j=b_{i+j},
\qquad
b_i b_j=b_{i+j}.
\]
Thus $a_0$ and $b_0$ are the respective identities of the $A$- and
$B$-parts, and a product lies in $B$ as soon as one factor does.

Give both components their natural chain orders:
$a_i\le_A a_j$ and $b_i\le_B b_j$ exactly when $i\le j$.
Define two ambient partial orders extending these component orders.
For $\le_0$, set
\[
a_i\le_0 b_j
\quad\Longleftrightarrow\quad
i\le j,
\]
with no cross-comparisons from $B$ to $A$.
For $\le_1$, set
\[
a_i\le_1 b_j
\quad\Longleftrightarrow\quad
i\le j,
\qquad
b_i\le_1 a_j
\quad\Longleftrightarrow\quad
i<j.
\]
Thus $\le_1$ interleaves the components as
$a_0<b_0<a_1<b_1<a_2<b_2<\cdots$.

Both $\langle S,\le_0,\cdot\rangle$ and
$\langle S,\le_1,\cdot\rangle$ are ordered semigroups: multiplication
adds indices and moves the product into $B$ whenever one factor lies in
$B$, so it preserves each of the preceding order clauses.
In both structures the positive idempotents are $a_0$ and $b_0$, with
$\tau(a_n)=a_0$ and $\tau(b_n)=b_0$ for every $n\in\mathbb N$.
Hence both structures are local-unit-aligned.

For either order, the discrete partition of the positive-idempotent skeleton
has blocks
\[
U=\{a_0\},
\qquad
V=\{b_0\},
\]
and is $\tau$-multiplication-coherent, since
\[
\tprod{U}{U}=U,
\qquad
\tprod{U}{V}=V,
\qquad
\tprod{V}{V}=V.
\]
It is therefore the canonical partition $\Cm(\mathbf S)$.
Its canonical skeleton is the chain $U<V$, and its components are the same
ordered semigroups $A$ and $B$ in both structures.
The resolved transport is also the same:
\[
\lambda^{U,V}_{b_0}(a_n)=a_n b_0=b_n
\qquad(n\in\mathbb N).
\]

Nevertheless, the resulting ambient orders differ.
Take $u=a_1\in\layer{U}$ and $v=b_0\in\layer{V}$.
Then
\[
v=b_0<_B b_1=\lambda^{U,V}_{b_0}(u),
\]
but $u$ and $v$ are incomparable under $\le_0$, whereas
$v=b_0<_1a_1=u$.
Thus the same strict upper-shadow signal gives incomparability in the first
structure and the corresponding reverse comparison in the second.

The two structures therefore have the same underlying semigroup, canonical
skeleton, component ordered semigroups, and resolved transports, but different
ambient orders.
Consequently, the canonical resolved-transport system does not by itself
determine the full ordering of the semigroup.
It determines the lower-to-higher comparisons, but not whether an upper
element lying strictly below the transported image of a lower element is
below that lower element or is incomparable with it.
\end{example}

Accordingly, full order recovery requires additional information about the
cross-component order.
There are two distinct issues.
Comparisons between incomparable canonical skeleton blocks must first be
excluded or otherwise specified, while reverse comparisons between comparable
blocks require an additional decision rule.
The skeleton-supported condition addresses the first issue.
The subsequent order-recovery packages address the second in different ways.

\begin{definition}[Skeleton-supported order]\label{def:skeleton-supported-order}
The ambient order is \emph{skeleton-supported} if, for all $A,B\in\Cm(\mathbf S)$, $x\in\layer{A}$, and $y\in\layer{B}$,
\[
 x\le y
 \quad\Longrightarrow\quad
 A\text{ and }B\text{ are comparable in }\le_{\rm m}.
\]
Equivalently, there are no ambient-order comparisons between elements lying in incomparable skeleton blocks.
This is an order-side support condition; it is not needed for multiplication.
\end{definition}

\subsection{The skeleton-oriented case}

\begin{definition}[Skeleton-oriented order]\label{def:skeleton-oriented-order}
We say that the ambient order is \emph{skeleton-oriented} if, for all $A,B\in\Cm(\mathbf S)$, $x\in\layer{A}$, and $y\in\layer{B}$,
\[
 x\le y
 \quad\Longrightarrow\quad
 A\le_{\rm m}B.
\]
Equivalently, every ambient-order comparison between distinct skeleton blocks is oriented from the lower skeleton block to the higher skeleton block.
This condition implies skeleton-supportedness.
\end{definition}

\begin{corollary}[Full order recovery under skeleton orientation]
\label{cor:oriented-full-order-recovery}
If $\mathbf S$ is skeleton-oriented, then $\mathfrak R(\mathbf S)$ determines the full ambient order.
Explicitly, for $x\in\layer{A}$ and $y\in\layer{B}$, the relation $x\le y$ is recovered as follows:
\begin{enumerate}
\item if $A=B$, use the order of the component $\layer{A}$;
\item if $A<_{\rm m}B$, use the lower-to-higher test of Theorem~\ref{thm:resolved-order-recovery};
\item if $A\not\le_{\rm m}B$, then $x\nleq y$.
\end{enumerate}
\end{corollary}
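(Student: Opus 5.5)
The proof is a three-way case split on the relative position of the blocks $A=\Cm(x)$ and $B=\Cm(y)$ in the join-semilattice $\mathsf S_{\mathrm m}(\mathbf S)$. Given $x,y\in S$, the data $\mathfrak R(\mathbf S)$ tell us these blocks, since $\mathfrak R(\mathbf S)$ records the components $\layer{A}$ as sets and they partition $S$. The data also determine whether $A=B$, $A<_{\rm m}B$, or $A\not\le_{\rm m}B$, because the skeleton order is part of $\mathfrak R(\mathbf S)$. These three cases are exhaustive and mutually exclusive, so it suffices to show that each prescribed rule returns the correct truth value of $x\le y$.

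\emph{Case $A=B$.} The component $\bigl(\layer{A},\le|_{\layer{A}},\cdot|_{\layer{A}}\bigr)$ carries the restriction of the ambient order. Hence $x\le y$ holds in $\mathbf S$ exactly when it holds in $\layer{A}$, and that order is part of $\mathfrak R(\mathbf S)$.

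\emph{Case $A<_{\rm m}B$.} Theorem~\ref{thm:resolved-order-recovery}, applied to $\mathcal P=\Cm(\mathbf S)$, gives $x\le y$ if and only if $\lambda^{A,B}_q(x)\le y$ for some $q\in B$. Both $\lambda^{A,B}_q(x)$ and $y$ lie in $\layer{B}$ by Proposition~\ref{prop:resolved-transport-basic}\eqref{item:ct-well-defined}. So each comparison on the right is decided inside the component order of $\layer{B}$, and the transports are listed in $\mathfrak R(\mathbf S)$. The test is therefore computable from $\mathfrak R(\mathbf S)$ alone.

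\emph{Case $A\not\le_{\rm m}B$.} Skeleton orientation (Definition~\ref{def:skeleton-oriented-order}) states that $x\le y$ implies $A\le_{\rm m}B$. By contraposition, $x\nleq y$.

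There is no real obstacle here. The only point needing care is that the third case genuinely uses the hypothesis: it covers both reverse comparisons $B<_{\rm m}A$ and comparisons between incomparable blocks. Example~\ref{ex:strict-upper-shadow-ambiguity} shows that without orientation this case cannot be decided from $\mathfrak R(\mathbf S)$. Because all three clauses use only data in $\mathfrak R(\mathbf S)$, two skeleton-oriented semigroups with the same resolved-transport system have the same order. This is the assertion that $\mathfrak R(\mathbf S)$ determines the full ambient order.
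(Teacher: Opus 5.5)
Your proposal is correct and follows the paper's proof essentially verbatim: the same three-way split, with the component order for $A=B$, Theorem~\ref{thm:resolved-order-recovery} for $A<_{\rm m}B$, and the contrapositive of skeleton orientation for $A\not\le_{\rm m}B$. Your added remark that both $\lambda^{A,B}_q(x)$ and $y$ lie in $\layer{B}$, so the second test is decided by the component order alone, is a useful detail that the paper leaves implicit.
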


\begin{proof}
The first case is part of the component data.
The second case is exactly Theorem~\ref{thm:resolved-order-recovery}.
In the third case, skeleton orientation would force $A\le_{\rm m}B$ from $x\le y$, contradicting $A\not\le_{\rm m}B$.
These three cases exhaust all pairs of blocks.
\end{proof}

Totality gives skeleton orientation an additional endpoint consequence that is unavailable in general partial orders.
The next proposition records precisely the receiving components for which this consequence is relevant.

\begin{proposition}[Totality and skeleton orientation make receiving-component identities least]\label{prop:totality-orientation-least-receiving-identities}
Let $\mathbf S$ be a local-unit-aligned totally ordered semigroup.
Assume that the ambient order is skeleton-oriented.
Let $B\in\Cm(\mathbf S)$ be a block with a lower skeleton block below it, meaning that there exists $A\in\Cm(\mathbf S)$ with $A<_{\rm m}B$.
Suppose that $\layer{B}$ is a monoid with identity $e_B$.
Then $e_B$ is the least element of $\layer{B}$.
\end{proposition}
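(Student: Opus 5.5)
Suppose, towards a contradiction, that some $y\in\layer{B}$ satisfies $y<e_B$. The plan is to play totality against skeleton orientation using a lower block $A<_{\rm m}B$.

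First I record what is known about $e_B$. By Proposition~\ref{prop:lower-pointed-block-component-monoid}, applied to the $\tau$-stable partition $\Cm(\mathbf S)$ (Lemma~\ref{lem:coherent-implies-stable}), the block $B$ is lower-pointed and $e_B=\min B$ is a positive idempotent. Now pick any $x\in\layer{A}$; the most convenient choice is a positive idempotent $p\in A$.

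Next I compare $p$ with $y$. By totality, either $p\le y$ or $y\le p$. Since $A\ne B$ and $B\not\le_{\rm m}A$ (because $A<_{\rm m}B$), skeleton orientation rules out $y\le p$. Hence $p\le y$, and since $y<e_B$ we get $p<e_B$.

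Then I derive the contradiction. Multiplying $p\le y$ by $e_B$ and using isotonicity gives $pe_B\le ye_B=y$, where $ye_B=y$ because $e_B\le\tau(y)$ (Lemma~\ref{lem:basic_tau}\eqref{item:below-tau-gives-unit}). By Lemma~\ref{lem:idempotents-max}, $pe_B=p\vee e_B=e_B$, since $p\le e_B$. Therefore $e_B\le y$, contradicting $y<e_B$.

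So no element of $\layer{B}$ lies strictly below $e_B$. By totality, every $y\in\layer{B}$ then satisfies $e_B\le y$, so $e_B$ is the least element of $\layer{B}$.

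The key step is excluding $y\le p$. Skeleton orientation is exactly what does this, and it needs a cross-block element from a strictly lower block, which is why the hypothesis $A<_{\rm m}B$ is required. The rest is a short idempotent computation.
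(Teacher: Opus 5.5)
Your proof is correct and follows the paper's argument: pick $p\in A$, use totality together with skeleton orientation to get $p\le y$ for every $y\in\layer{B}$, and multiply by $e_B$ using $pe_B=e_B$. The differences are cosmetic. You identify $e_B=\min B$ via Proposition~\ref{prop:lower-pointed-block-component-monoid}, whereas the paper checks $e_B=\tau(e_B)$ directly. Your contradiction framing is also unnecessary, since taking $y=e_B$ in $p\le y$ already gives $p\le e_B$.
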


\begin{proof}
First observe that $e_B$ is an ambient positive idempotent.
Since $e_B\in\layer{B}$, we have $\tau(e_B)\in B$.
Also $\tau(e_B)\in\layer{B}$, because $B\subseteq\PosId(\mathbf S)$.
As $e_B$ is the identity of the monoid $\layer{B}$, we get
$e_B\tau(e_B)=\tau(e_B)$.
On the other hand, $\tau(e_B)$ is a two-sided local unit for $e_B$, so
$e_B\tau(e_B)=e_B$.
Thus $e_B=\tau(e_B)$, and hence $e_B\in B\subseteq\PosId(\mathbf S)$.

Choose $p\in A$.
Since $p$ is a positive idempotent, Lemma~\ref{lem:basic_tau}\eqref{item:tau-fixes-idempotents} gives $\tau(p)=p$, and hence $p\in\layer{A}$.
Let $y\in\layer{B}$.
Because the ambient order is total, either $p\le y$ or $y\le p$.
The second alternative is impossible by skeleton orientation: it would give a cross-block comparison from $B$ down to $A$, contrary to $A<_{\rm m}B$.
Thus $p\le y$.
In particular, applying this to $y=e_B$, we get $p\le e_B$.
Since both $p$ and $e_B$ are positive idempotents, Lemma~\ref{lem:idempotents-max} gives $pe_B=e_B$.
Multiplying $p\le y$ by $e_B$, we obtain $pe_B\le ye_B$.
Since $e_B$ is the identity of $\layer{B}$, this gives $e_B\le y$.
Therefore $e_B$ is the least element of $\layer{B}$.
\end{proof}

The totality assumption is essential.
The following finite lattice-ordered example is skeleton-oriented, but the identity of its receiving component is not least.

\begin{example}[Skeleton orientation does not force a component identity to be least]
\label{ex:skeleton-oriented-identity-not-component-least-lattice}
Skeleton orientation alone does not force the identity of a receiving component to be least.
This can already happen on a finite distributive lattice.

Let $S=\{\bot,c,a,e\}$ be ordered as the four-element Boolean lattice:
$\bot<c<e$, $ \bot<a<e$, with $a$ and $c$ incomparable.
Define a commutative multiplication by
\[
\begin{array}{c|cccc}
\cdot & \bot & c & a & e\\
\hline
\bot & \bot & \bot & a & a\\
c    & \bot & c    & a & e\\
a    & a    & a    & a & a\\
e    & a    & e    & a & e
\end{array}
\]
This multiplication is associative.
Indeed, it is the product associated with an ordinary two-level direct system, obtained from the lower component $\{\bot,c\}$, with identity $c$, the upper component $\{a,e\}$, with identity $e$, and the transition homomorphism
$\rho(\bot)=a$, $ \rho(c)=e$.
The order is compatible with multiplication, as is checked on the cover relations
$\bot<c$, $ \bot<a$, $ c<e$, $ a<e$.

The positive idempotents are exactly $c$ and $e$.
The element $c$ is the identity of all of $S$, hence is positive.
The element $e$ is positive because
$\bot e=a\ge \bot$, $ ce=e\ge c$, $ ae=a$, $ e^2=e$.
The element $\bot$ is not positive, since $c\bot=\bot<c$, and $a$ is not positive, since $ea=a<e$.
The local-unit map is
$\tau(\bot)=c$, $ \tau(c)=c$, $ \tau(a)=e$, $ \tau(e)=e$.
Thus
$\layer{c}=\{\bot,c\}$, $ \layer{e}=\{a,e\}$.
Moreover,
$\layer{c}\layer{c}\subseteq\layer{c},
$ $
\layer{c}\layer{e}\cup\layer{e}\layer{c}\subseteq\layer{e},
$ $
\layer{e}\layer{e}\subseteq\layer{e}$.
Hence
$\Cm(\mathbf S)=\bigl\{\{c\},\{e\}\bigr\},
$ $
\{c\}<_{\rm m}\{e\}$.
Thus these two exact layers are precisely the canonical components.

The upper component $\layer{e}$ has a lower skeleton block below it, but its identity $e$ is not least in $\layer{e}$, since $a<e$.
Nevertheless the ambient order is skeleton-oriented.
The only cross-block comparisons are
$\bot<a$, $ \bot<e$, $ c<e$,
all from $\layer{c}$ to $\layer{e}$.
There is no comparison from $\layer{e}$ down to $\layer{c}$.

Thus skeleton orientation does not, by itself, force the identity of a receiving component to be least in that component.
What fails here is the finite rigid directed-lexicographic mechanism: the transition homomorphism is not unit-constant, since
$\rho(\bot)=a$, $ \rho(c)=e$.
In the finite rigid directed-lexicographic situation, an inequality $a<e=\rho(c)$ would create a reverse comparison from the upper component to the lower one.
In this lattice order, that reverse comparison is simply absent.
\end{example}

Thus skeleton orientation controls the direction of cross-block comparisons, but without totality it does not determine the internal endpoint position of a receiving-component identity.

\begin{remark}[$\tau$-block-linearity is not an order-recovery axiom]\label{rem:block-linearity-not-order-recovery}
The join formula in Theorem~\ref{thm:resolved-product-recovery} recovers
multiplication without assuming that $\mathbf S$ is
$\tau$-block-linear.
The condition of $\tau$-block-linearity only removes incomparable
canonical skeleton blocks from the order bookkeeping.
It does not by itself rule out reverse cross-block comparisons, that is, comparisons $y\le x$ with $x\in\layer{A}$, $y\in\layer{B}$, and $A<_{\rm m}B$.
Nor does it decide which of these reverse comparisons hold.
Thus full order recovery needs an additional cross-order principle, such as skeleton orientation, skeleton-supported cross-block totality along the skeleton, skeleton-supported strict upper-shadow reflection, or a comparable explicit rule.
In a totally ordered semigroup, positive-idempotent linearity implies
$\tau$-block-linearity by Proposition~\ref{prop:m-skeleton-join}; after the lower-to-higher comparison has been tested for two distinct blocks, trichotomy decides the opposite orientation.
This is the chain special case captured by cross-block totality below.
\end{remark}

\subsection{The cross-block total case}

\begin{definition}[Cross-block totality]\label{def:cross-block-totality}
We say that the ambient order is \emph{cross-block total along the skeleton} if, whenever $A<_{\rm m}B$, $x\in\layer{A}$, and $y\in\layer{B}$, the elements $x$ and $y$ are comparable in the ambient order.
No totality is required inside a single component, and this condition makes no assertion about pairs of incomparable skeleton blocks.
\end{definition}

\begin{proposition}[Full order recovery under cross-block totality]\label{prop:cross-block-totality-recovery}
If $\mathbf S$ is skeleton-supported and cross-block total along the skeleton, then $\mathfrak R(\mathbf S)$ determines the full ambient order.
Explicitly, for $x\in\layer{A}$ and $y\in\layer{B}$, the relation $x\le y$ is recovered as follows:
\begin{enumerate}
\item if $A=B$, use the order of the component $\layer{A}$;
\item if $A<_{\rm m}B$, use the lower-to-higher test of Theorem~\ref{thm:resolved-order-recovery};
\item if $B<_{\rm m}A$, then $x\le y$ holds exactly when the lower-to-higher test does not give $y\le x$;
\item if $A$ and $B$ are incomparable, then $x\nleq y$.
\end{enumerate}
\end{proposition}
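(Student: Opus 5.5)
The plan is a case analysis on the relative position of the blocks $A={\mathcal C}_{\mathrm m}(x)$ and $B={\mathcal C}_{\mathrm m}(y)$ in the join-semilattice $\mathsf S_{\mathrm m}(\mathbf S)$ of Proposition~\ref{prop:m-skeleton-join}. Since $\le_{\rm m}$ is a partial order, exactly one of four situations occurs: $A=B$, $A<_{\rm m}B$, $B<_{\rm m}A$, or $A$ and $B$ are incomparable. For each situation I will show that the rule listed in the statement uses only data contained in $\mathfrak R(\mathbf S)$ and returns the true value of $x\le y$. Since the blocks of $\Cm(\mathbf S)$ and the sets $\layer{A}$ are part of $\mathfrak R(\mathbf S)$, the case that applies to a given pair can be read off from the system.

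The first two cases need no new work. If $A=B$, the comparison takes place inside $\layer{A}$, whose order $\le|_{\layer{A}}$ is one of the listed components of $\mathfrak R(\mathbf S)$. If $A<_{\rm m}B$, Theorem~\ref{thm:resolved-order-recovery}, specialized to $\mathcal P=\Cm(\mathbf S)$, gives $x\le y$ exactly when $\lambda^{A,B}_{q}(x)\le_B y$ for some $q\in B$. This test uses only the transports and the order of $\layer{B}$.

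The fourth case follows directly from skeleton-supportedness (Definition~\ref{def:skeleton-supported-order}). If $x\le y$ held with $A,B$ incomparable, the blocks would have to be comparable, which is false. So $x\nleq y$ in this case, and the rule does not depend on the particular elements.

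The third case, $B<_{\rm m}A$, is the only one that uses cross-block totality, and I expect it to be the main point to check. Here $y$ lies in the lower block, so Theorem~\ref{thm:resolved-order-recovery} applied with roles exchanged decides whether $y\le x$, namely whether $\lambda^{B,A}_{q}(y)\le_A x$ for some $q\in A$. Cross-block totality (Definition~\ref{def:cross-block-totality}) with the lower block $B$ and upper block $A$ makes $x$ and $y$ comparable. If the test says $y\nleq x$, comparability forces $x\le y$. If the test says $y\le x$, then $x\le y$ would give $x=y$ by antisymmetry, which is impossible because $x\ne y$: indeed $\tau(x)\in A$, $\tau(y)\in B$, and the blocks are disjoint. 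Hence in this case $x\le y$ holds exactly when the test does not give $y\le x$.

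Since the four cases are exhaustive and each outcome is computed from $\mathfrak R(\mathbf S)$ alone, the full ambient order is determined.
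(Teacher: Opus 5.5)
Your proposal is correct and follows essentially the same route as the paper. The paper makes the same four-way case split on the skeleton positions of $A$ and $B$, and uses the same arguments: component data for $A=B$, Theorem~\ref{thm:resolved-order-recovery} for $A<_{\rm m}B$, the lower-to-higher test plus cross-block totality and disjointness of the layers for $B<_{\rm m}A$, and skeleton-supportedness for incomparable blocks.
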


\begin{proof}
The first case is component data, and the second is Theorem~\ref{thm:resolved-order-recovery}.
Assume $B<_{\rm m}A$.
The resolved-transport system can test whether $y\le x$, since this is a lower-to-higher comparison.
If $y\le x$, then $x\nleq y$ by antisymmetry: the layers $\layer{A}$ and $\layer{B}$ are disjoint, so $x\ne y$.
If $y\nleq x$, then cross-block totality forces $x\le y$.
Finally, if $A$ and $B$ are incomparable, skeleton-supportedness gives $x\nleq y$.
These cases exhaust all pairs of blocks.
\end{proof}

\subsection{The strict upper-shadow reflection case}

Example~\ref{ex:strict-upper-shadow-ambiguity} shows that the component order
can detect a strict upper shadow without the resolved-transport system
determining whether it represents an actual reverse comparison.
Strict upper-shadow reflection resolves precisely this ambiguity by requiring
the internal strict-shadow test to be equivalent to the corresponding reverse
ambient comparison.

\begin{definition}[Strict upper-shadow reflection]\label{def:strict-upper-shadow-reflection}
For $B\in\Cm(\mathbf S)$, write $<_{B}$ for the strict part of the component order on $\layer{B}$.
We say that the ambient order is \emph{strict upper-shadow reflective along the skeleton} if, whenever $A<_{\rm m}B$, $x\in\layer{A}$, and $y\in\layer{B}$,
\[
 y\le x
 \quad\Longleftrightarrow\quad
 y<_{B}\lambda^{A,B}_{q}(x)\text{ for every }q\in B.
\]
The right-hand condition is computed entirely inside the upper component $\layer{B}$.
This condition controls reverse comparisons along comparable skeleton blocks only.
\end{definition}

The implication from an ambient reverse comparison to the strict upper-shadow
condition is automatic. Indeed, let $A<_{\rm m}B$,
$x\in\layer{A}$, and $y\in\layer{B}$. If $y\le x$, then, for every
$q\in B$,
\[
y\le x\le xq=\lambda_q^{A,B}(x),
\]
since $q$ is positive. Equality
$y=\lambda_q^{A,B}(x)$ would give $x\le y$, hence $x=y$, contrary to
the disjointness of the components. Therefore
$y<_{B}\lambda_q^{A,B}(x)$ for every $q\in B$.
Thus the substantive content of strict upper-shadow reflection is the converse
implication
\[
\bigl[y<_{B}\lambda_q^{A,B}(x)\text{ for every }q\in B\bigr]
\Longrightarrow y\le x.
\]

\begin{proposition}[Full order recovery under strict upper-shadow reflection]\label{prop:strict-upper-shadow-recovery}
If $\mathbf S$ is skeleton-supported and strict upper-shadow reflective along the skeleton, then $\mathfrak R(\mathbf S)$ determines the full ambient order.
Explicitly, for $x\in\layer{A}$ and $y\in\layer{B}$, the relation $x\le y$ is recovered as follows:
\begin{enumerate}
\item if $A=B$, use the order of the component $\layer{A}$;
\item if $A<_{\rm m}B$, use the lower-to-higher test of Theorem~\ref{thm:resolved-order-recovery};
\item if $B<_{\rm m}A$, test whether $x<_{A}\lambda^{B,A}_{q}(y)$ for every $q\in A$;
\item if $A$ and $B$ are incomparable, then $x\nleq y$.
\end{enumerate}
\end{proposition}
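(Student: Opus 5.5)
The argument follows the pattern of Corollary~\ref{cor:oriented-full-order-recovery} and Proposition~\ref{prop:cross-block-totality-recovery}. Fix $x\in\layer{A}$ and $y\in\layer{B}$. We split according to the relative position of $A$ and $B$ in the canonical skeleton $\mathsf S_{\mathrm m}(\mathbf S)$. For each case we check two things: the proposed test is equivalent to $x\le y$, and it uses only data contained in $\mathfrak R(\mathbf S)$, namely the skeleton, the component ordered semigroups, and the maps $\lambda^{\cdot,\cdot}_q$. Since $\le_{\rm m}$ is a partial order, the four cases $A=B$, $A<_{\rm m}B$, $B<_{\rm m}A$, and incomparability are exhaustive and mutually exclusive.

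The case $A=B$ is part of the component data, because the order on $\layer{A}$ is the restriction of the ambient order. The case $A<_{\rm m}B$ is exactly Theorem~\ref{thm:resolved-order-recovery}. The quantity $\lambda^{A,B}_q(x)$ is read from the transport family, and the comparison $\lambda^{A,B}_q(x)\le y$ takes place inside $\layer{B}$. In the incomparable case, skeleton-supportedness (Definition~\ref{def:skeleton-supported-order}) gives $x\nleq y$ directly, since any comparison $x\le y$ would force $A$ and $B$ to be comparable.

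The substantive case is $B<_{\rm m}A$, where $x$ lies in the higher component. Here we apply Definition~\ref{def:strict-upper-shadow-reflection} with the roles of the letters exchanged: the lower block is $B$, the upper block is $A$, the lower element is $y\in\layer{B}$, and the upper element is $x\in\layer{A}$. The definition then states
\[
x\le y\iff x<_{A}\lambda^{B,A}_{q}(y)\text{ for every }q\in A,
\]
which is exactly the displayed test in the statement. The right-hand side is computed entirely inside $\layer{A}$, using the strict part of the component order and the transports $\lambda^{B,A}_q$. Hence it is determined by $\mathfrak R(\mathbf S)$.

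The only real care point is this role swap, so that the hypothesis is applied with the correct orientation. It is also worth noting that the forward implication of the definition is automatic, as the paper records before the proposition. The reflection hypothesis is therefore used only for the converse direction.

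Combining the four cases, every comparison $x\le y$ is decided by $\mathfrak R(\mathbf S)$. This is the claimed full order recovery.
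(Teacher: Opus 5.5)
Your proof is correct and matches the paper's argument: the same four-case split over the skeleton order, using the component data, Theorem~\ref{thm:resolved-order-recovery}, skeleton-supportedness, and strict upper-shadow reflection applied with lower block $B$, upper block $A$, lower element $y$, and upper element $x$. Your observation that only the converse direction of the reflection hypothesis has real content is accurate, but the proof does not need it.
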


\begin{proof}
The first case is component data, and the second is Theorem~\ref{thm:resolved-order-recovery}.
For the third case, $B<_{\rm m}A$, the asserted criterion is exactly strict upper-shadow reflection applied with lower block $B$, upper block $A$, lower element $y$, and upper element $x$.
All terms in the test belong to $\mathfrak R(\mathbf S)$.
If $A$ and $B$ are incomparable, skeleton-supportedness gives $x\nleq y$.
These cases exhaust all pairs of blocks.
\end{proof}

The hypothesis is genuinely additional: the two structures in
Example~\ref{ex:strict-upper-shadow-ambiguity} have identical canonical
resolved-transport data, but only one realizes the strict upper-shadow signal
as a reverse ambient comparison.

\subsection{Order recovery from componentwise order duality}

\begin{definition}[Component-preserving order anti-automorphism]
\label{def:component-preserving-order-antiautomorphism}
Let $\mathbf S$ be a local-unit-aligned ordered semigroup, and let
\[
\nu\colon S\longrightarrow S
\]
be an anti-automorphism of the ordered set $\langle S,\le\rangle$, that is, a bijection satisfying
\[
x\le y
\quad\Longleftrightarrow\quad
\nu(y)\le\nu(x)
\qquad(x,y\in S).
\]
Here ``anti-automorphism'' refers only to the ordered-set structure; no compatibility of $\nu$ with multiplication is assumed or needed.
We call $\nu$ \emph{component-preserving} if
\[
{\mathcal C}_{\mathrm m}(\nu(x))
=
{\mathcal C}_{\mathrm m}(x)
\qquad(x\in S).
\]
Equivalently, every component $\layer{A}$ of $\Lm(\mathbf S)$ is invariant under $\nu$.

The stronger identity
\[
\tau(\nu(x))=\tau(x)
\qquad(x\in S)
\]
implies component preservation.

The corresponding \emph{order-dual resolved-transport system} is
\[
\mathfrak R_{\nu}(\mathbf S)
:=
\left(
\mathfrak R(\mathbf S),
\bigl(\nu\!\upharpoonright_{\layer{A}}\bigr)_{A\in\Cm(\mathbf S)}
\right).
\]
\end{definition}

\begin{proposition}[Full order recovery from componentwise order duality]
\label{prop:antiautomorphism-full-order-recovery}
Assume that $\mathbf S$ is skeleton-supported and carries a component-preserving order anti-automorphism $\nu$.
Then $\mathfrak R_{\nu}(\mathbf S)$ reconstructs the full ambient order.
Explicitly, for $x\in\layer{A}$ and $y\in\layer{B}$, the relation $x\le y$ is recovered as follows:
\begin{enumerate}
\item if $A=B$, use the order of the component $\layer{A}$;
\item if $A<_{\rm m}B$, use the lower-to-higher test of Theorem~\ref{thm:resolved-order-recovery};
\item if $B<_{\rm m}A$, use
\[
x\le y
\quad\Longleftrightarrow\quad
\nu(y)\le\nu(x)
\quad\Longleftrightarrow\quad
\exists q\in A\text{ such that }
\lambda^{B,A}_{q}(\nu(y))\le\nu(x);
\]
\item if $A$ and $B$ are incomparable, then $x\nleq y$.
\end{enumerate}
Consequently, $\mathfrak R_{\nu}(\mathbf S)$ reconstructs the full expanded ordered semigroup
\[
\langle S,\le,\cdot,\nu\rangle.
\]
\end{proposition}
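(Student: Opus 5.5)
The proof follows the same four-case template as Corollary~\ref{cor:oriented-full-order-recovery} and Propositions~\ref{prop:cross-block-totality-recovery} and~\ref{prop:strict-upper-shadow-recovery}. For $x\in\layer{A}$ and $y\in\layer{B}$ we split according to the relative position of $A$ and $B$ in the join-semilattice $\mathsf S_{\mathrm m}(\mathbf S)$ of Proposition~\ref{prop:m-skeleton-join}. Exactly one of the following holds: $A=B$, $A<_{\rm m}B$, $B<_{\rm m}A$, or $A$ and $B$ are incomparable. The first case is decided by the component order $\le|_{\layer{A}}$, which is part of $\mathfrak R(\mathbf S)$. The second case is Theorem~\ref{thm:resolved-order-recovery} applied to $\mathcal P=\Cm(\mathbf S)$. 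The fourth case follows from skeleton-supportedness (Definition~\ref{def:skeleton-supported-order}), which gives $x\nleq y$ directly.

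The only substantive step is the third case, $B<_{\rm m}A$.

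1. Since $\nu$ is an order anti-automorphism, $x\le y$ holds if and only if $\nu(y)\le\nu(x)$.
2. Component preservation gives ${\mathcal C}_{\mathrm m}(\nu(y))=B$ and ${\mathcal C}_{\mathrm m}(\nu(x))=A$. So $\nu(y)\in\layer{B}$ and $\nu(x)\in\layer{A}$.
3. The comparison $\nu(y)\le\nu(x)$ is therefore a lower-to-higher comparison along $B<_{\rm m}A$. Theorem~\ref{thm:resolved-order-recovery} gives $\nu(y)\le\nu(x)$ if and only if $\lambda^{B,A}_{q}(\nu(y))\le\nu(x)$ for some $q\in A$. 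This yields the displayed chain of equivalences.
4. All ingredients of this test belong to $\mathfrak R_\nu(\mathbf S)$. The restrictions $\nu\!\upharpoonright_{\layer{A}}$ and $\nu\!\upharpoonright_{\layer{B}}$ are part of the data. The transports $\lambda^{B,A}_q$ and the component order on $\layer{A}$ are part of $\mathfrak R(\mathbf S)$. The final comparison takes place inside $\layer{A}$.

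For the consequence, multiplication is recovered unconditionally by Theorem~\ref{thm:resolved-product-recovery} specialized to $\Cm(\mathbf S)$, and the order is recovered by the four cases above. Since $\nu$ preserves every component, it is the disjoint union of its restrictions $\nu\!\upharpoonright_{\layer{A}}$, so it is recovered as well. Together these give the whole structure $\langle S,\le,\cdot,\nu\rangle$.

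I expect no real obstacle. The one point needing care is that component preservation is exactly what moves both $\nu$-images into the correct layers, so that the reverse comparison becomes a lower-to-higher one rather than staying a reverse one. The bijectivity of $\nu$ is used only through the equivalence in step 1.
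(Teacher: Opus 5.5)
Your proposal is correct and follows the paper's own proof essentially step for step. Both use the same four-case split, convert the reverse case $B<_{\rm m}A$ via $\nu$ and component preservation into a lower-to-higher comparison decided by Theorem~\ref{thm:resolved-order-recovery}, and then recover multiplication from Theorem~\ref{thm:resolved-product-recovery} and $\nu$ from its componentwise restrictions. One small correction: the equivalence in your step 1 is the defining property of an order anti-automorphism, so bijectivity of $\nu$ is not used anywhere in the argument.
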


\begin{proof}
The first case is part of the component data, and the second is Theorem~\ref{thm:resolved-order-recovery}.
Suppose that $B<_{\rm m}A$.
Since $\nu$ is an order anti-automorphism,
\[
x\le y
\quad\Longleftrightarrow\quad
\nu(y)\le\nu(x).
\]
Component preservation gives
\[
\nu(y)\in\layer{B},
\qquad
\nu(x)\in\layer{A}.
\]
Thus $\nu(y)\le\nu(x)$ is a lower-to-higher comparison, and Theorem~\ref{thm:resolved-order-recovery} gives
\[
\nu(y)\le\nu(x)
\quad\Longleftrightarrow\quad
\exists q\in A\text{ such that }
\lambda^{B,A}_{q}(\nu(y))\le\nu(x).
\]
If $A$ and $B$ are incomparable, skeleton-supportedness excludes $x\le y$.
Hence every ambient-order comparison is reconstructed.

The multiplication is reconstructed by Theorem~\ref{thm:resolved-product-recovery}, and $\nu$ is recorded componentwise in $\mathfrak R_{\nu}(\mathbf S)$.
Therefore the full expanded ordered semigroup is reconstructed.
\end{proof}

\begin{corollary}[$\tau$-block-linear case]
\label{cor:antiautomorphism-chain-skeleton-recovery}
Let $\mathbf S$ carry a component-preserving order anti-automorphism $\nu$.
If $\mathbf S$ is $\tau$-block-linear, then
$\mathfrak R_{\nu}(\mathbf S)$ reconstructs the full expanded ordered semigroup
\[
\langle S,\le,\cdot,\nu\rangle.
\]
\end{corollary}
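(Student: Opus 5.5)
The corollary should follow directly from Proposition~\ref{prop:antiautomorphism-full-order-recovery}. The only hypothesis of that proposition not explicitly assumed here is skeleton-supportedness, so the plan is to show that $\tau$-block-linearity implies skeleton-supportedness and then invoke the proposition.

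First, recall from Definition~\ref{def:m-skeleton} that $\tau$-block-linearity means the order $\le_{\rm m}$ on $\Cm(\mathbf S)$ is a chain. Any two blocks $A,B\in\Cm(\mathbf S)$ are therefore comparable in $\le_{\rm m}$.

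Next, I verify Definition~\ref{def:skeleton-supported-order}. Let $x\in\layer{A}$ and $y\in\layer{B}$ with $x\le y$. The required conclusion, that $A$ and $B$ are comparable in $\le_{\rm m}$, holds unconditionally because $\le_{\rm m}$ is total. So the ambient order is skeleton-supported, vacuously in the sense that there are no incomparable skeleton blocks at all. In the case analysis of Proposition~\ref{prop:antiautomorphism-full-order-recovery}, case (4) never arises, and cases (1)--(3) already cover all pairs of blocks.

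Finally, apply Proposition~\ref{prop:antiautomorphism-full-order-recovery} with the given component-preserving order anti-automorphism $\nu$. It yields that $\mathfrak R_{\nu}(\mathbf S)$ reconstructs the full order, and, together with the multiplication reconstructed by Theorem~\ref{thm:resolved-product-recovery} and $\nu$ recorded componentwise, the full expanded ordered semigroup $\langle S,\le,\cdot,\nu\rangle$. There is no real obstacle here. The only point needing care is to note that skeleton-supportedness is a condition on pairs of incomparable blocks, and so it holds automatically once the skeleton is a chain. As an aside, by Proposition~\ref{prop:m-skeleton-join} the same conclusion applies whenever $\mathbf S$ is positive-idempotent-linear.
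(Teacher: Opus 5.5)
Your proposal is correct and matches the paper's proof: $\tau$-block-linearity leaves no incomparable skeleton blocks, so skeleton-supportedness holds automatically and Proposition~\ref{prop:antiautomorphism-full-order-recovery} applies. Your closing aside, that the conclusion also covers positive-idempotent-linear semigroups, is valid via Proposition~\ref{prop:m-skeleton-join}.
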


\begin{proof}
Since $\mathbf S$ is $\tau$-block-linear, its canonical skeleton has no
incomparable blocks, so skeleton-supportedness is automatic.
Proposition~\ref{prop:antiautomorphism-full-order-recovery} applies.
\end{proof}

\begin{remark}[Balanced involutive residuated expansions]
\label{rem:involutive-order-recovery}
Suppose that $\mathbf S$ is the ordered-semigroup reduct of a balanced involutive residuated po-semigroup, with residuals $\backslash$ and $/$ and negations $x^{\sim}$ and $x^{-}$.
The involutive terminology and the contraposition identities used below are as in \cite[Section~3.3 and Lemma~3.17(1)--(2)]{GalatosJipsenKowalskiOno2007}.

By Remark~\ref{rem:relation-to-finite-monoid}, the reduct is local-unit-aligned and
\[
\tau(x)=x\backslash x=x/x.
\]
Using the contraposition identities, we obtain
\[
\begin{aligned}
\tau(x^{\sim})
&=x^{\sim}\backslash x^{\sim}
 =x^{\sim}/x^{\sim}
 =x\backslash x
 =\tau(x),\\
\tau(x^{-})
&=x^{-}/x^{-}
 =x^{-}\backslash x^{-}
 =x/x
 =\tau(x).
\end{aligned}
\]
Thus both negations preserve every exact $\tau$-layer and hence every component of $\Lm(\mathbf S)$.

Since the negations are mutually inverse antitone maps, either of them is a component-preserving order anti-automorphism.
Proposition~\ref{prop:antiautomorphism-full-order-recovery} therefore shows that, under skeleton-supportedness, adjoining the componentwise action of either negation to the resolved-transport data recovers the full ambient order.
If $\mathbf S$ is $\tau$-block-linear, skeleton-supportedness is automatic.

Finally, either negation determines the other, and multiplication together with the recovered order determines both residuals.
Hence either augmented resolved-transport system reconstructs the full balanced involutive residuated expansion.
\end{remark}

\section{Ordinary direct systems from skeleton-monotone least-element collapses}
\label{sec:least-element-collapse}

The previous section gives the resolved-transport form of the decomposition--reconstruction procedure.
We now separate the two ingredients needed for the ordinary single-map picture.
Lower-pointedness of each block receiving a proper transition selects its least positive idempotent and hence one pointwise least target map.
Skeleton-monotonicity of these receiving minima makes the selected maps compatible under composition.
Together, these hypotheses recover an ordinary single-index direct system.
Throughout this section we use the canonical skeleton order from Definition~\ref{def:m-skeleton}.

\begin{definition}
\label{def:least-element-collapse}
Put
\[
\mathcal R
:=
\bigl\{B\in\Cm(\mathbf S): A<_{\rm m}B\text{ for some }A\in\Cm(\mathbf S)\bigr\}.
\]
Thus $\mathcal R$ is the set of blocks receiving a proper transition.
Assume that every block $B\in\mathcal R$ is lower-pointed, and write $e_B:=\min B$.
We say that the receiving least idempotents are \emph{skeleton-monotone} if
\[
B\le_{\rm m}C
\quad\Longrightarrow\quad
e_B\le e_C
\qquad(B,C\in\mathcal R).
\]
When this holds, define
\[
\rho_{A,A}:=\mathrm{id}_{\layer{A}}
\qquad(A\in\Cm(\mathbf S))
\]
and, for $A<_{\rm m}B$, define
\[
\rho_{A,B}(x):=xe_B
\qquad(x\in\layer{A}).
\]
We call $(\rho_{A,B})_{A\le_{\rm m}B}$ the \emph{least-element collapse} of the resolved-transport system.
\end{definition}

Thus lower-pointedness alone supplies the candidate least-target maps
\[
x\longmapsto xe_B.
\]
The term \emph{least-element collapse} is used here only after the receiving minima have also been assumed skeleton-monotone; this second hypothesis is what permits the direct-system composition law proved below.

\begin{remark}[When receiving-block monotonicity is automatic]
\label{rem:skeleton-monotonicity-automatic-under-linearity}
The skeleton-monotonicity hypothesis in Definition~\ref{def:least-element-collapse} is automatic under positive-idempotent linearity, hence in particular under ambient totality, once every receiving block is lower-pointed.

Indeed, assume that $\PosId(\mathbf S)$ is totally ordered and let $B,C\in\mathcal R$, with $B\le_{\rm m}C$.
If $B=C$, there is nothing to prove.
Assume $B\ne C$.
Since $e_B,e_C\in\PosId(\mathbf S)$, either $e_B\le e_C$ or $e_C\le e_B$.
If $e_C\le e_B$, then Lemma~\ref{lem:idempotents-max} gives
$e_Be_C=e_B$.
But $B\le_{\rm m}C$ implies $e_Be_C\in\layer{C}$; since this product is
a positive idempotent, it belongs to $C$.
Hence $e_B\in C$, contradicting $B\ne C$.
Thus $e_B\le e_C$.

The weaker linearity of the quotient skeleton does not suffice, as Example~\ref{ex:block-linear-not-skeleton-monotone-minima} shows.
\end{remark}

\begin{example}[$\tau$-block-linearity forces neither receiving-block skeleton monotonicity nor unitality of the collapse maps]
\label{ex:block-linear-not-skeleton-monotone-minima}
Let
\[
S=\{d,c,u,v,z\},
\]
ordered by the transitive closure of
\[
d<c,\qquad d<u,\qquad c<v,\qquad u<v,\qquad z<v,
\]
and define a commutative multiplication by
\[
\begin{array}{c|ccccc}
\cdot & d&c&u&v&z\\
\hline
d&d&c&u&v&z\\
c&c&c&v&v&v\\
u&u&v&u&v&z\\
v&v&v&v&v&v\\
z&z&v&z&v&v
\end{array}
\]

Associativity may be seen without checking all triples.
Let $\mathbf E=\langle\{0,1\},\wedge\rangle$, and let $\mathbf N=\langle\{0,a,1\},\cdot\rangle$ be the commutative monoid in which $1$ is the identity, $0$ is absorbing, and $a^2=0$.
The set
\[
\{(1,0),(0,1),(0,a),(0,0)\}
\]
is a subsemigroup of $\mathbf E\times\mathbf N$, and its multiplication becomes the restriction of the displayed table to $\{c,u,z,v\}$ under
\[
c=(1,0),\qquad u=(0,1),\qquad z=(0,a),\qquad v=(0,0).
\]
The element $d$ is then adjoined as an identity.
The five generating inequalities of the order are preserved by multiplication by each element, as follows directly from the table.
Hence $\mathbf S=\langle S,\le,\cdot\rangle$ is an ordered semigroup.

The positive idempotents are $d,c,u,v$, and the local-unit sets are
\[
\begin{aligned}
\{q:dq=d\}&=\{d\},&
\{q:cq=c\}&=\{d,c\},\\
\{q:uq=u\}&=\{d,u\},&
\{q:vq=v\}&=S,\\
\{q:zq=z\}&=\{d,u\}.
\end{aligned}
\]
Thus $\mathbf S$ is local-unit-aligned, with
\[
\tau(d)=d,\qquad
\tau(c)=c,\qquad
\tau(u)=u,\qquad
\tau(v)=v,\qquad
\tau(z)=u.
\]

Put
\[
D:=\{d\},\qquad A:=\{c\},\qquad B:=\{u,v\}.
\]
Then
\[
\layer{D}=\{d\},\qquad
\layer{A}=\{c\},\qquad
\layer{B}=\{u,v,z\},
\]
and
\[
\begin{aligned}
\tprod{D}{D}&=D,
&
\tprod{D}{A}&=A,
&
\tprod{D}{B}&=B,
\\
\tprod{A}{A}&=A,
&
\tprod{A}{B}&=\{v\}\subseteq B,
&
\tprod{B}{B}&=B.
\end{aligned}
\]
Hence $\{D,A,B\}$ is $\tau$-multiplication-coherent and its quotient skeleton is the chain
\[
D<_{\rm m}A<_{\rm m}B.
\]

This partition is canonical.
Indeed, let $\mathcal Q$ be any $\tau$-multiplication-coherent partition, and let $U$ be its block containing $u$.
Since $\tau(z)=u$, both $u$ and $z$ belong to $\layer{U}$.
But
\[
\tau(u^2)=u
\qquad\text{and}\qquad
\tau(z^2)=\tau(v)=v.
\]
Multiplication coherence therefore forces $u$ and $v$ into the same block of $\mathcal Q$.
Consequently,
\[
\Cm(\mathbf S)=\{D,A,B\}.
\]

The receiving set is $\mathcal R=\{A,B\}$.
Both receiving blocks are lower-pointed, with $e_A=c$ and $e_B=u$, but $c$ and $u$ are incomparable.
Thus
\[
A<_{\rm m}B
\qquad\text{while}\qquad
e_A\nleq e_B.
\]
Therefore $\tau$-block-linearity does not force the receiving-block skeleton monotonicity needed for the least-element collapse.

The same multiplication also shows that collapse maps need not be unital.
Consider the induced ordered subsemigroup
\[
\mathbf S':=\mathbf S\mathbin{\upharpoonright}\{c,u,v,z\}.
\]
Its canonical skeleton consists of $A=\{c\}<_{\rm m}B=\{u,v\}$, and $B$ is the only receiving block, so its least-element collapse is defined.
The two components are monoids with identities $c$ and $u$, respectively, but
\[
\rho_{A,B}(c)=cu=v\ne u.
\]
Hence the proper collapse map is an isotone semigroup homomorphism but not a unital monoid homomorphism.
\end{example}

\begin{proposition}\label{prop:least-element-collapse}
Assume that the least-element collapse of Definition~\ref{def:least-element-collapse} is defined.
Then, for all $A,B\in\Cm(\mathbf S)$ with $A<_{\rm m}B$,
\[
\rho_{A,B}=\lambda^{A,B}_{e_B}.
\]
Moreover, for every $q\in B$ and every $x\in\layer{A}$,
\[
\rho_{A,B}(x)\le\lambda^{A,B}_{q}(x).
\]
Each receiving component $\layer{B}$, with $A<_{\rm m}B$ for some $A$, is a monoid with identity $e_B$, and each proper map
\[
\rho_{A,B}\colon\layer{A}\to\layer{B}
\]
with $A<_{\rm m}B$ is an isotone semigroup homomorphism.
The maps $\rho_{A,B}$ satisfy the ordinary direct-system identities
\[
\rho_{A,A}=\mathrm{id}_{\layer{A}},
\qquad
\rho_{B,C}\circ\rho_{A,B}=\rho_{A,C}
\quad(A\le_{\rm m}B\le_{\rm m}C).
\]
\end{proposition}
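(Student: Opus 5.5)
The proof will be a direct verification from the definitions, using the calculus of Proposition~\ref{prop:resolved-transport-basic}. First, for $A<_{\rm m}B$ we have $B\in\mathcal R$, so $e_B=\min B$ is defined. By Definition~\ref{def:least-element-collapse} and Definition~\ref{def:canonical-resolved-transport-homomorphisms}, $\rho_{A,B}(x)=xe_B=\lambda^{A,B}_{e_B}(x)$, which is the identity $\rho_{A,B}=\lambda^{A,B}_{e_B}$. For every $q\in B$ we have $e_B\le q$, so Proposition~\ref{prop:resolved-transport-basic}\eqref{item:ct-growing} gives $\lambda^{A,B}_{e_B}(x)\le\lambda^{A,B}_{q}(x)$; this is the pointwise-least statement. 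Proposition~\ref{prop:lower-pointed-block-component-monoid} applies to $B$ because $\Cm(\mathbf S)$ is $\tau$-stable (Lemma~\ref{lem:coherent-implies-stable}), so $\layer{B}$ is a monoid with identity $e_B$. Proposition~\ref{prop:resolved-transport-basic}\eqref{item:ct-hom} then shows that $\rho_{A,B}$ is an isotone semigroup homomorphism into $\layer{B}$.

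For the direct-system identities, $\rho_{A,A}=\mathrm{id}$ holds by definition. For the composition law with $A\le_{\rm m}B\le_{\rm m}C$, I will split into cases. If $A=B$ or $B=C$, one factor is the identity and there is nothing to prove. If $A<_{\rm m}B<_{\rm m}C$, then $A<_{\rm m}C$ because $\le_{\rm m}$ is a partial order (Proposition~\ref{prop:m-skeleton-join}), and $A\ne C$ by antisymmetry. In this case
\[
\rho_{B,C}(\rho_{A,B}(x))=(xe_B)e_C=x(e_Be_C).
\]
Since $B,C\in\mathcal R$ and $B\le_{\rm m}C$, skeleton-monotonicity gives $e_B\le e_C$. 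Lemma~\ref{lem:idempotents-max} then gives $e_Be_C=e_C$, so the composite equals $xe_C=\rho_{A,C}(x)$.

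The only step that needs real care is this composition case. The point is that monotonicity is required only between receiving blocks, so I must check that both $B$ and $C$ lie in $\mathcal R$. This holds because $A<_{\rm m}B$ puts $B$ in $\mathcal R$, and $B<_{\rm m}C$ puts $C$ in $\mathcal R$. Without skeleton-monotonicity one would only get $x(e_Be_C)$ with $e_Be_C\in C$, which is $\lambda^{A,C}_{e_Be_C}(x)$ by item~\eqref{item:ct-lax-comp}, and not necessarily the least transport.
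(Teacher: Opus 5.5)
Your proposal is correct and follows the paper's proof essentially step for step. Proposition~\ref{prop:lower-pointed-block-component-monoid} gives $e_B$ as the identity of $\layer{B}$, minimality of $e_B$ in $B$ gives the pointwise-least property, and in the only nontrivial composition case $A<_{\rm m}B<_{\rm m}C$ skeleton-monotonicity yields $e_Be_C=e_C$. The differences are only cosmetic: you cite items~\eqref{item:ct-growing} and~\eqref{item:ct-hom} of Proposition~\ref{prop:resolved-transport-basic} where the paper redoes the one-line computations, and you make explicit the $\tau$-stability and $A\ne C$ checks that the paper leaves implicit.
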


\begin{proof}
If $B\in\mathcal R$, then Proposition~\ref{prop:lower-pointed-block-component-monoid} shows that $e_B$ is the identity of $\layer{B}$.
This proves the assertion about receiving components.

Assume $A<_{\rm m}B$.
Since $q\in B$ and $e_B$ is the least element of $B$, we have $e_B\le q$.
Isotonicity gives
$
xe_B\le xq=\lambda^{A,B}_{q}(x).
$
Thus $\rho_{A,B}=\lambda^{A,B}_{e_B}$ is the pointwise least member of the resolved-transport family.

The map $\rho_{A,B}$ lands in $\layer{B}$ by Proposition~\ref{prop:resolved-transport-basic}\eqref{item:ct-well-defined}.
It is isotone because $x\le y$ implies $xe_B\le ye_B$.
For $x,y\in\layer{A}$, centrality of $e_B$ gives
\[
\rho_{A,B}(x)\rho_{A,B}(y)
=(xe_B)(ye_B)
=xy e_B^2
=xy e_B
=\rho_{A,B}(xy),
\]
so $\rho_{A,B}$ is a semigroup homomorphism.

The identity $\rho_{A,A}=\mathrm{id}_{\layer{A}}$ holds by definition.
Let $A\le_{\rm m}B\le_{\rm m}C$.
If $A=B$ or $B=C$, the identity $\rho_{B,C}\circ\rho_{A,B}=\rho_{A,C}$ follows immediately from the diagonal definitions.
It remains to consider $A<_{\rm m}B<_{\rm m}C$.
Then $B,C\in\mathcal R$, so skeleton-monotonicity gives $e_B\le e_C$, and Lemma~\ref{lem:idempotents-max} gives $e_Be_C=e_C$.
Hence
\[
(\rho_{B,C}\circ\rho_{A,B})(x)=(xe_B)e_C=x(e_Be_C)=xe_C=\rho_{A,C}(x).
\]
Thus the least-element collapse is an ordinary direct system of ordered semigroups extracted from the resolved-transport system.
\end{proof}

Lower-pointedness of the receiving blocks supplies the least-target maps, while skeleton-monotonicity of their least idempotents supplies coherence under composition; together these hypotheses produce a direct system in the category of ordered semigroups.
A block outside $\mathcal R$ need not be lower-pointed, and hence its component need not be a monoid.
Moreover, even when the source and target components are monoids, a proper transition need not preserve their identities, as Example~\ref{ex:block-linear-not-skeleton-monotone-minima} shows.
Thus no unitality assertion is part of Proposition~\ref{prop:least-element-collapse}.

\begin{corollary}[Product recovery by the least-element collapse]
\label{cor:least-element-collapse-product}
Assume that the least-element collapse is defined.
Let $A,B\in\Cm(\mathbf S)$, put $C:=A\vee B$, and let $x\in\layer{A}$ and $y\in\layer{B}$.
Then
\[
xy=\rho_{A,C}(x)\rho_{B,C}(y),
\]
with the product computed in $\layer{C}$.
Consequently, the ordinary direct system of ordered semigroups given by the least-element collapse reconstructs the multiplication of $\mathbf S$.
\end{corollary}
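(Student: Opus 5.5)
The plan is a case split on the position of $A$ and $B$ relative to $C=A\vee B$, followed by a reduction to the resolved product formula of Theorem~\ref{thm:resolved-product-recovery}.

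\emph{Case $A=B=C$.} Here $\rho_{A,C}$ and $\rho_{B,C}$ are identities, so the claim is trivial.

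\emph{Case where at least one of $A,B$ lies strictly below $C$.} Then $C\in\mathcal R$, so $e_C$ is defined. By Proposition~\ref{prop:least-element-collapse}, $e_C$ is the identity of the monoid $\layer{C}$.

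In this case I would establish the key identity
\[
\rho_{A,C}(x)=xe_C,\qquad \rho_{B,C}(y)=ye_C,
\]
valid also when one of $A,B$ equals $C$. If $A<_{\rm m}C$, this is the definition of $\rho_{A,C}$. If $A=C$, then $\rho_{C,C}(x)=x$. Since $x\in\layer{C}$ and $e_C=\min C\le\tau(x)$, Lemma~\ref{lem:basic_tau}\eqref{item:below-tau-gives-unit} gives $xe_C=x$, so the identity holds here too. The same argument applies to $y$.

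Next, centrality and idempotence of $e_C$ (Proposition~\ref{prop:positive-idempotents-central}) give
\[
\rho_{A,C}(x)\rho_{B,C}(y)=(xe_C)(ye_C)=xye_C.
\]
By Lemma~\ref{lem:m-block-product}, $xy\in\layer{C}$, so $\tau(xy)\in C$ and hence $e_C\le\tau(xy)$. Lemma~\ref{lem:basic_tau}\eqref{item:below-tau-gives-unit} then yields $xye_C=xy$, which proves the product formula.

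Equivalently, one can view this as the instance $q=e_C$ of the minimum in Theorem~\ref{thm:resolved-product-recovery}. That instance already attains $xy$, because $e_C$ is below the local unit of everything in $\layer{C}$.

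The reconstruction claim then follows directly: $C=A\vee B$ is read off the skeleton, the maps $\rho$ are part of the direct system, and the product is computed in $\layer{C}$.

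The only delicate point is the mixed case, where one factor already lies in $C$ and the $\rho$ for it is the identity rather than multiplication by $e_C$. This is exactly why lower-pointedness of $C$ is needed, and it holds because $C\in\mathcal R$ whenever $C\ne A$ or $C\ne B$.
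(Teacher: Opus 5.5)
Your proposal is correct and follows essentially the same route as the paper: you handle the case $A=B=C$ separately, and otherwise use $C\in\mathcal R$ to make $e_C$ the identity of $\layer{C}$, reduce the right-hand side by centrality to $xye_C$, and conclude $xye_C=xy$ from $xy\in\layer{C}$. Your rewriting of the diagonal map as multiplication by $e_C$ is only a cosmetic normalization of the paper's remark that the product is either $xy$ or $xye_C$.
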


\begin{proof}
If $A=B=C$, both transition maps are identities and the formula is immediate.
Otherwise $C\in\mathcal R$, so $e_C$ is the identity of $\layer{C}$.
Each proper map into $\layer{C}$ is multiplication by $e_C$, while a diagonal map is the identity.
Hence the product on the right is either $xy$ or $xye_C$.
Since $xy\in\layer{C}$, the latter also equals $xy$.
\end{proof}

Lower-pointedness therefore selects one resolved transport uniformly for each proper receiving component, and skeleton-monotonicity makes these selections coherent across successive transitions.
When a receiving block is not lower-pointed, no single transition homomorphism need work uniformly on its whole component, and the full target-indexed family may remain essential.

\begin{remark}\label{rem:no-lower-pointed-needed}
Lower-pointedness is not part of the canonical resolved-transport decomposition.
It enters only in the ordinary direct-system specialization: every receiving block must be lower-pointed, and the least idempotents of the receiving blocks must be skeleton-monotone.
Even then the resulting system is a direct system of ordered semigroups rather than necessarily a unital direct system of monoids; Example~\ref{ex:block-linear-not-skeleton-monotone-minima} gives a proper transition that does not preserve component identities.
The resolved transports reconstruct multiplication in arbitrary skeletons by Theorem~\ref{thm:resolved-product-recovery} and reconstruct lower-to-higher comparisons by Theorem~\ref{thm:resolved-order-recovery}.
Full order recovery additionally requires skeleton orientation, skeleton-supported cross-block totality, skeleton-supported strict upper-shadow reflection, directed lexicographicity, componentwise order duality, or another explicit cross-block order principle.
\end{remark}

We now turn from multiplication to the additional order rule naturally associated with the single-valued collapse maps.

\begin{definition}[Directed-lexicographic relation for the least-element collapse]\label{def:ordinary-directed-lexicographic-order}
This is the global relation obtained by applying the single-valued transported comparison rule to the least-element collapse maps, with arbitrary pairs compared after transport to their join component.
Assume that the least-element collapse $(\rho_{A,B})_{A\le_{\rm m}B}$ of Definition~\ref{def:least-element-collapse} is defined.

For $A,B\in\Cm(\mathbf S)$, $x\in\layer{A}$, and $y\in\layer{B}$, put $C:=A\vee B$.
Write $<_{C}$ for the strict part of the component order on $\layer{C}$.
The \emph{directed lexicographic relation} induced by the least-element collapse is the relation $\le_{\mathrm{dlex}}$ on $S$ defined by
\[
\begin{aligned}
x\le_{\mathrm{dlex}}y
\quad\Longleftrightarrow\quad{}
&\rho_{A,C}(x)<_{C}\rho_{B,C}(y),\\
&\text{or }\rho_{A,C}(x)=\rho_{B,C}(y)
  \text{ and }A\le_{\rm m}B.
\end{aligned}
\]
No assertion is made here that $\le_{\mathrm{dlex}}$ is a partial order for arbitrary least-element-collapse data.
We call it a directed lexicographic order only when it is a partial order.
We say that the ambient order is \emph{directed lexicographic with respect to the least-element collapse}, or simply that \emph{directed lexicographicity} holds, if the ambient order coincides with $\le_{\mathrm{dlex}}$.
\end{definition}

\begin{remark}[Working form of the directed lexicographic relation]\label{rem:ordinary-dlex-working-form}
With the notation of Definition~\ref{def:ordinary-directed-lexicographic-order}, let $x\in\layer{A}$, $y\in\layer{B}$, and $C=A\vee B$.
All inequalities on the right are evaluated in the indicated component.
Then
\[
x\le_{\mathrm{dlex}}y
\quad\Longleftrightarrow\quad
\begin{cases}
x\le y, & A=B,\\[1mm]
\rho_{A,B}(x)\le y, & A<_{\rm m}B,\\[1mm]
x<\rho_{B,A}(y), & B<_{\rm m}A,\\[1mm]
\rho_{A,C}(x)<_{C}\rho_{B,C}(y), & A\parallel B.
\end{cases}
\]
Here $A\parallel B$ means that $A$ and $B$ are incomparable.
Thus the lower-to-higher comparison is the usual transported comparison, the higher-to-lower comparison is strict below the transported lower element, and an incomparable-block comparison is made only after both elements have been transported to their join component.
For a chain skeleton this reduces to the finite directed lexicographic order, rewritten with the present paper's $\tau$-multiplication-coherent blocks as indices and the least-element collapse maps as transition homomorphisms.
\end{remark}

The directed lexicographic relation is not merely a formal candidate.
The direct-system structure gives natural sufficient conditions under which it
is an order, and in the totally ordered case it necessarily recovers the
ambient order.

\begin{proposition}[Orderhood criteria for the directed lexicographic relation]
\label{prop:dlex-orderhood-criteria}
Assume that the least-element collapse is defined.

\begin{enumerate}

\item\label{item:dlex-order-embedding-criterion}
If every proper transition
\[
\rho_{A,B}\colon\layer{A}\longrightarrow\layer{B},
\qquad
A<_{\rm m}B,
\]
is an order embedding, then $\le_{\mathrm{dlex}}$ is a partial order on
$S$.

\item\label{item:dlex-chain-criterion}
If $\mathbf S$ is $\tau$-block-linear and every component
$\layer{A}$ is a chain, then $\le_{\mathrm{dlex}}$ is a total order on
$S$.

\end{enumerate}
\end{proposition}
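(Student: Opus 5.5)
The plan is to reduce every instance of $\le_{\mathrm{dlex}}$ to a comparison inside one common component. The reduction uses the direct-system identities $\rho_{C,D}\circ\rho_{A,C}=\rho_{A,D}$ and the isotonicity of the collapse maps from Proposition~\ref{prop:least-element-collapse}.

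Reflexivity is immediate, since $A=B$ gives the component order. Antisymmetry needs no hypothesis beyond Definition~\ref{def:ordinary-directed-lexicographic-order}. Take $x\in\layer{A}$, $y\in\layer{B}$ with $x\le_{\mathrm{dlex}}y\le_{\mathrm{dlex}}x$ and put $C=A\vee B$.
\begin{itemize}
\item If $A=B$, antisymmetry of the component order gives $x=y$.
\item If $A<_{\rm m}B$, the working form of Remark~\ref{rem:ordinary-dlex-working-form} gives $\rho_{A,B}(x)\le y<\rho_{A,B}(x)$, which is impossible.
\item If $A\parallel B$, we would have $\rho_{A,C}(x)<_C\rho_{B,C}(y)<_C\rho_{A,C}(x)$, again impossible.
\end{itemize}
So transitivity is the only real issue, in both items.

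For item~\eqref{item:dlex-order-embedding-criterion}, I would first prove a transfer lemma. Let $D\ge_{\rm m}A\vee B$. Then $x\le_{\mathrm{dlex}}y$ holds if and only if
\[
\rho_{A,D}(x)<_D\rho_{B,D}(y)
\quad\text{or}\quad
\bigl(\rho_{A,D}(x)=\rho_{B,D}(y)\text{ and }A\le_{\rm m}B\bigr).
\]
The proof takes $C=A\vee B$ and applies $\rho_{C,D}$ to the defining condition in $\layer{C}$. Since $\rho_{C,D}$ is an order embedding, it is injective and both preserves and reflects $<$ and $=$. The composition law then rewrites $\rho_{C,D}\rho_{A,C}$ as $\rho_{A,D}$ and $\rho_{C,D}\rho_{B,C}$ as $\rho_{B,D}$.

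Given $x\le_{\mathrm{dlex}}y\le_{\mathrm{dlex}}z$ with $z\in\layer{E}$, I take $D=A\vee B\vee E$ and apply the transfer lemma to all three pairs. The relation then becomes the lexicographic order on pairs (transported element in $\layer{D}$, block in $(\Cm(\mathbf S),\le_{\rm m})$), which is transitive:
\begin{itemize}
\item if either step is strict in $\layer{D}$, the composite is strict;
\item if both steps are equalities, then $A\le_{\rm m}B\le_{\rm m}E$.
\end{itemize}
Applying the lemma backwards gives $x\le_{\mathrm{dlex}}z$.

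For item~\eqref{item:dlex-chain-criterion}, totality is direct. Since the skeleton is a chain, either $A=B$, or say $A<_{\rm m}B$, and then $\rho_{A,B}(x)\le y$ or $y<\rho_{A,B}(x)$ in the chain $\layer{B}$. Combined with antisymmetry, this gives trichotomy. Without embeddings the transfer lemma fails, so I replace it with two one-directional facts valid whenever $D\ge_{\rm m}A\vee B$:
\begin{enumerate}
\item[(i)] $x\le_{\mathrm{dlex}}y$ implies $\rho_{A,D}(x)\le_D\rho_{B,D}(y)$, by isotonicity and the composition law;
\item[(ii)] $\rho_{A,D}(x)<_D\rho_{B,D}(y)$ implies $x<_{\mathrm{dlex}}y$. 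Otherwise totality gives $y\le_{\mathrm{dlex}}x$, and (i) gives the reverse weak inequality in $D$, a contradiction.
\end{enumerate}
Now let $x\le_{\mathrm{dlex}}y\le_{\mathrm{dlex}}z$ and $D=\max(A,B,E)$. By (i), $\rho_{A,D}(x)\le\rho_{B,D}(y)\le\rho_{E,D}(z)$. If the outer inequality is strict, (ii) gives $x\le_{\mathrm{dlex}}z$.

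The main obstacle is the remaining case $\rho_{A,D}(x)=\rho_{B,D}(y)=\rho_{E,D}(z)$, where information may be lost by non-injective collapses. There I would argue by contradiction. Totality and antisymmetry would give $z<_{\mathrm{dlex}}x$, producing a strict dlex $3$-cycle. I would then do a case analysis on which of $A,B,E$ is the top block and on the position of the other two in the chain skeleton. In each case I would push the two lower elements to their pairwise join using Remark~\ref{rem:ordinary-dlex-working-form}, and use the strict/weak asymmetry built into the definition: a weak inequality for lower-to-higher, a strict one for higher-to-lower. The aim is to derive $u<u$ in a single chain component.
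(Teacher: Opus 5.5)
Your antisymmetry argument and your proof of item~1 are correct and essentially match the paper's. Both transport everything to $A\vee B\vee E$, where the embedding hypothesis lets strict inequalities and equalities be pushed up and reflected back.

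\emph{The gap is in item~2.} Your facts (i) and (ii) are sound. Comparability of $\le_{\mathrm{dlex}}$ is checked directly from the definition, so using it in (ii) is not circular. Together they settle the case $\rho_{A,D}(x)<_D\rho_{E,D}(z)$. The remaining case is $\rho_{A,D}(x)=\rho_{B,D}(y)=\rho_{E,D}(z)$, which you yourself call the main obstacle. There you only announce a case analysis by contradiction, and the conclusion ``$u<u$'' is stated as an aim rather than derived. So transitivity of $\le_{\mathrm{dlex}}$ is not yet proved.

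\emph{How to close it.} One observation makes that case trivial, and no $3$-cycle analysis is needed. Take a step $u\le_{\mathrm{dlex}}v$ with $u$ in the top block $D$ and $v$ in a block $Q<_{\rm m}D$. It reads $u<_D\rho_{Q,D}(v)$, so it stays strict after transport to $\layer{D}$. Now suppose the equality case holds but $E<_{\rm m}D$. Then one of two things happens:
\begin{itemize}
\item $B=D$, and the step $y\le_{\mathrm{dlex}}z$ is strict in $\layer{D}$;
\item $A=D$ and $B<_{\rm m}D$, and the step $x\le_{\mathrm{dlex}}y$ is strict in $\layer{D}$.
\end{itemize}
Either way equality is contradicted, so $E=D$. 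Then $x\le_{\mathrm{dlex}}z$ is just the clause $\rho_{A,D}(x)\le z$, or $x\le z$ if $A=D$, and this holds with equality.

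With this line added your proof is complete, and it is a genuinely different route from the paper's. The paper does a direct case analysis over the two-block coincidences and the six orderings of three distinct blocks. Whenever $y$'s block lies strictly above those of $x$ and $z$, it reflects strict inequalities back through isotone maps out of chain components. Your (ii) packages all those reflections into a single appeal to totality, leaving only the degenerate equality case, which the observation above disposes of.
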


\begin{proof}
Reflexivity follows from
$\rho_{A,A}=\mathrm{id}_{\layer{A}}$.

For antisymmetry, let $x\in\layer{A}$, $y\in\layer{B}$, and put
$C=A\vee B$.
If $x\le_{\mathrm{dlex}}y$ and $y\le_{\mathrm{dlex}}x$, neither
transported image can be strictly below the other.
Hence
$\rho_{A,C}(x)=\rho_{B,C}(y)$, $A\le_{\rm m}B$, and
$B\le_{\rm m}A$.
Thus $A=B=C$, and therefore $x=y$.

For item~\ref{item:dlex-order-embedding-criterion}, suppose that
$x\le_{\mathrm{dlex}}y\le_{\mathrm{dlex}}z$, where
$x\in\layer{A}$, $y\in\layer{B}$, and $z\in\layer{D}$, and put
$H:=A\vee B\vee D$.
Using the direct-system identities and the order-embedding hypothesis, the
first comparison transports either to
$\rho_{A,H}(x)<\rho_{B,H}(y)$, or to equality together with
$A\le_{\rm m}B$.
Similarly, the second transports either to
$\rho_{B,H}(y)<\rho_{D,H}(z)$, or to equality together with
$B\le_{\rm m}D$.
Transitivity in $\layer{H}$ therefore gives either
$\rho_{A,H}(x)<\rho_{D,H}(z)$, or equality together with
$A\le_{\rm m}D$.

Now put $G:=A\vee D$.
Since
$\rho_{A,H}=\rho_{G,H}\circ\rho_{A,G}$ and
$\rho_{D,H}=\rho_{G,H}\circ\rho_{D,G}$, and since
$\rho_{G,H}$ is an order embedding, the preceding strict inequality or
equality reflects back to $\layer{G}$.
Hence $x\le_{\mathrm{dlex}}z$.
Thus $\le_{\mathrm{dlex}}$ is transitive and therefore a partial order.

For item~\ref{item:dlex-chain-criterion}, first observe that any two
elements are $\le_{\mathrm{dlex}}$-comparable.
Indeed, if $A<_{\rm m}B$, then the chain order of $\layer{B}$ gives
either $\rho_{A,B}(x)\le y$ or $y<\rho_{A,B}(x)$, yielding
respectively $x\le_{\mathrm{dlex}}y$ or
$y\le_{\mathrm{dlex}}x$.

It remains to prove transitivity.
If $A=B$ or $B=D$, the conclusion follows directly from transitivity
in the repeated component and isotonicity of the relevant transition map.
Suppose that $A=D\ne B$. If $B<_{\rm m}A$, then
$x<\rho_{B,A}(y)\le z$, so $x\le z$. If $A<_{\rm m}B$, then
$\rho_{A,B}(x)\le y<\rho_{A,B}(z)$. Were $z<x$, isotonicity would give
$\rho_{A,B}(z)\le\rho_{A,B}(x)$, a contradiction. Since
$\layer{A}$ is a chain, $x\le z$.
For three distinct blocks, the six possible relative positions of
$A,B,D$ are handled, using the direct-system identities and isotonicity,
by
\[
\begin{array}{rcl}
A<B<D
&:&
\rho_{A,D}(x)\le\rho_{B,D}(y)\le z,
\\[1mm]
A<D<B
&:&
\rho_{A,B}(x)\le y<\rho_{D,B}(z)
\ \Longrightarrow\
\rho_{A,D}(x)<z,
\\[1mm]
B<A<D
&:&
x<\rho_{B,A}(y)
\ \Longrightarrow\
\rho_{A,D}(x)\le\rho_{B,D}(y)\le z,
\\[1mm]
B<D<A
&:&
x<\rho_{B,A}(y)\le\rho_{D,A}(z),
\\[1mm]
D<A<B
&:&
\rho_{A,B}(x)\le y<\rho_{D,B}(z)
\ \Longrightarrow\
x<\rho_{D,A}(z),
\\[1mm]
D<B<A
&:&
x<\rho_{B,A}(y)\le\rho_{D,A}(z).
\end{array}
\]
In the second and fifth lines, the indicated conclusion follows because the
relevant source component is a chain and the corresponding transition is
isotone.
Each case yields $x\le_{\mathrm{dlex}}z$.
Thus $\le_{\mathrm{dlex}}$ is transitive and hence a total order.
\end{proof}

\begin{corollary}[Automatic directed lexicographicity in the totally ordered collapse case]
\label{cor:total-order-dlex-automatic}
Suppose that $\mathbf S$ is totally ordered and that its least-element
collapse is defined.
Then
\[
\le\;=\;\le_{\mathrm{dlex}}.
\]
Consequently, $\le_{\mathrm{dlex}}$ is a multiplication-compatible total
order and the least-element-collapse data reconstruct the full ordered
semigroup.
\end{corollary}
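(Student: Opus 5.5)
The plan is to show that the ambient order $\le$ is contained in $\le_{\mathrm{dlex}}$. Since both relations will turn out to be total orders, equality then follows by a short antisymmetry argument.

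First I would record the structural consequences of totality.
\begin{itemize}
\item Since $\mathbf S$ is totally ordered, it is positive-idempotent-linear, so Proposition~\ref{prop:m-skeleton-join} makes it $\tau$-block-linear.
\item Every component $\layer{A}$, carrying the restricted order, is a chain.
\item Hence Proposition~\ref{prop:dlex-orderhood-criteria}\eqref{item:dlex-chain-criterion} applies, and $\le_{\mathrm{dlex}}$ is a total order on $S$.
\item Block-linearity also means that the incomparable case $A\parallel B$ in Remark~\ref{rem:ordinary-dlex-working-form} never arises.
\end{itemize}

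Next I would prove that $x\le y$ implies $x\le_{\mathrm{dlex}}y$, where $x\in\layer{A}$ and $y\in\layer{B}$, using the working form of Remark~\ref{rem:ordinary-dlex-working-form} case by case.

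If $A=B$, there is nothing to prove.

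If $A<_{\rm m}B$, then $B\in\mathcal R$ and $e_B\le\tau(y)$. Lemma~\ref{lem:basic_tau}\eqref{item:below-tau-gives-unit} then gives $ye_B=y$. Multiplying $x\le y$ by $e_B$ yields $\rho_{A,B}(x)=xe_B\le ye_B=y$, which is the required clause.

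If $B<_{\rm m}A$, then $A\in\mathcal R$, and positivity of $e_A$ gives $x\le y\le ye_A=\rho_{B,A}(y)$. For strictness, suppose $x=ye_A$. Then $x\le y\le x$, so $x=y$, which contradicts the disjointness of $\layer{A}$ and $\layer{B}$. Hence $x<\rho_{B,A}(y)$, as the working form requires.

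I expect this reverse case to be the only delicate point: the dlex clause asks for a strict inequality, and the disjointness of the layers is exactly what supplies it.

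For the converse, suppose $x\le_{\mathrm{dlex}}y$ but $x\nleq y$. Totality of the ambient order gives $y<x$. The inclusion just proved then gives $y\le_{\mathrm{dlex}}x$. Antisymmetry of $\le_{\mathrm{dlex}}$ forces $x=y$, contradicting $y<x$. Therefore $\le\;=\;\le_{\mathrm{dlex}}$.

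Finally, I would draw the stated consequences.
\begin{itemize}
\item $\le_{\mathrm{dlex}}$ coincides with the ambient order, so it is automatically a multiplication-compatible total order.
\item Corollary~\ref{cor:least-element-collapse-product} recovers the multiplication from the collapse maps.
\item The collapse data determine $\le_{\mathrm{dlex}}$, and hence the order.
\end{itemize}
Together these show that the least-element-collapse data reconstruct the full ordered semigroup.
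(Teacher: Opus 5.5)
Your proof is correct and follows essentially the paper's route. You derive $\tau$-block-linearity and chain components from totality, invoke the chain criterion of Proposition~\ref{prop:dlex-orderhood-criteria}, and check the clauses of Remark~\ref{rem:ordinary-dlex-working-form}. The only difference is bookkeeping: you verify $\le\subseteq\le_{\mathrm{dlex}}$ directly, using $ye_B=y$, positivity of $e_A$ and disjointness of layers, and then conclude from ambient totality together with antisymmetry of $\le_{\mathrm{dlex}}$. The paper instead obtains the lower-to-higher clause as an equivalence from Theorem~\ref{thm:resolved-order-recovery} and settles the reverse clause using the chain order of $\layer{B}$ and ambient totality.
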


\begin{proof}
Totality of the ambient order implies that the canonical skeleton and every
component are chains.
By Proposition~\ref{prop:dlex-orderhood-criteria}
\eqref{item:dlex-chain-criterion}, the relation
$\le_{\mathrm{dlex}}$ is a total order.

It remains to compare it with the ambient order.
The two orders agree inside each component.
Let $A<_{\rm m}B$, $x\in\layer{A}$, and $y\in\layer{B}$.
Since $\rho_{A,B}=\lambda^{A,B}_{e_B}$ is the pointwise least
resolved transport, Theorem~\ref{thm:resolved-order-recovery} gives
\[
x\le y
\quad\Longleftrightarrow\quad
\rho_{A,B}(x)\le y.
\]
If this condition fails, the chain order of $\layer{B}$ gives
\[
y<\rho_{A,B}(x).
\]
Then $x\nleq y$, and ambient totality forces $y\le x$.
These are exactly the two comparable-block clauses of
Remark~\ref{rem:ordinary-dlex-working-form}.
Since the canonical skeleton is a chain, there are no incomparable-block
cases.
Hence the two orders coincide.
\end{proof}

The four cases above also locate directed lexicographicity relative to the
three order-recovery packages introduced earlier.
The comparable-block clauses control strict upper shadows, while the
incomparable-block clause determines whether skeleton-supportedness holds;
skeleton orientation and cross-block totality are then governed by the
position of the transition images inside the receiving components.

\begin{proposition}[Directed lexicographicity and the three order-recovery packages]
\label{prop:dlex-three-order-recovery-packages}
Assume that the least-element collapse is defined and that the ambient order is
directed lexicographic with respect to it.
Then the following statements hold.
\begin{enumerate}
\item\label{item:dlex-shadow-automatic}
The ambient order is strict upper-shadow reflective along the skeleton.

\item\label{item:dlex-skeleton-supported}
The ambient order is skeleton-supported if and only if, whenever
$A\parallel B$, $x\in\layer{A}$, $y\in\layer{B}$, and
$C=A\vee B$, the elements
\[
\rho_{A,C}(x)
\qquad\text{and}\qquad
\rho_{B,C}(y)
\]
are not strictly comparable in $\layer{C}$.

\item\label{item:dlex-skeleton-oriented}
The ambient order is skeleton-oriented if and only if the condition in
item~\ref{item:dlex-skeleton-supported} holds and, for every
$A<_{\rm m}B$ and $x\in\layer{A}$, the element
$
\rho_{A,B}(x)
$
is minimal in $\layer{B}$.

\item\label{item:dlex-cross-block-total}
The ambient order is cross-block total along the skeleton if and only if,
for every $A<_{\rm m}B$, $x\in\layer{A}$, and $y\in\layer{B}$, the
elements
$
\rho_{A,B}(x)
$
and
$
y
$
are comparable in $\layer{B}$.
\end{enumerate}

Consequently, a directed-lexicographic order satisfies the three
order-recovery packages considered above precisely as follows:
\begin{enumerate}
\item it satisfies the skeleton-orientation package exactly under the
conditions in item~\ref{item:dlex-skeleton-oriented};
\item it satisfies the skeleton-supported cross-block-totality package
exactly when the conditions in
items~\ref{item:dlex-skeleton-supported} and
\ref{item:dlex-cross-block-total} both hold;
\item it satisfies the skeleton-supported strict-upper-shadow package
exactly under the condition in
item~\ref{item:dlex-skeleton-supported}.
\end{enumerate}
\end{proposition}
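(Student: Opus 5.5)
The whole argument runs through the working form of $\le_{\mathrm{dlex}}$ in Remark~\ref{rem:ordinary-dlex-working-form}. Since we assume that the ambient order coincides with $\le_{\mathrm{dlex}}$, every ambient comparison between two components becomes a comparison inside a single component. Explicitly, let $x\in\layer{A}$ and $y\in\layer{B}$.
\begin{enumerate}
\item If $A<_{\rm m}B$, then $x\le y$ iff $\rho_{A,B}(x)\le y$ in $\layer{B}$, and $y\le x$ iff $y<\rho_{A,B}(x)$ in $\layer{B}$.
\item If $A\parallel B$ and $C=A\vee B$, then $x\le y$ iff $\rho_{A,C}(x)<_C\rho_{B,C}(y)$.
\end{enumerate}
I will also use Proposition~\ref{prop:least-element-collapse}, which gives $\rho_{A,B}=\lambda^{A,B}_{e_B}$ and shows that this is the pointwise least member of the resolved-transport family. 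With these two reductions, each item becomes a short translation, and I will treat them in order.

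For item~\ref{item:dlex-shadow-automatic}, fix $A<_{\rm m}B$, $x\in\layer{A}$ and $y\in\layer{B}$. By the working form, $y\le x$ is equivalent to $y<_B\rho_{A,B}(x)$, so it suffices to show that this is equivalent to $y<_B\lambda^{A,B}_q(x)$ for every $q\in B$. The backward direction is the instance $q=e_B$. For the forward direction, combine $y<_B\rho_{A,B}(x)$ with $\rho_{A,B}(x)\le\lambda^{A,B}_q(x)$. This gives $y\le\lambda^{A,B}_q(x)$, and equality is impossible because it would force $\rho_{A,B}(x)\le y<\rho_{A,B}(x)$. (The text after Definition~\ref{def:strict-upper-shadow-reflection} already handles the easy implication, but the argument above covers both directions directly.)

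For item~\ref{item:dlex-skeleton-supported}, a comparison $x\le y$ or $y\le x$ across incomparable blocks $A\parallel B$ occurs, by the incomparable clause, exactly when $\rho_{A,C}(x)$ and $\rho_{B,C}(y)$ are strictly comparable in $\layer{C}$, in one direction or the other. Skeleton-supportedness forbids all such comparisons, and that is exactly the stated condition.

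For item~\ref{item:dlex-skeleton-oriented}, I split skeleton orientation into two parts. The first part is that there are no comparisons across incomparable blocks, which is item~\ref{item:dlex-skeleton-supported}. The second part is that there are no reverse comparisons $y\le x$ with $A<_{\rm m}B$, $x\in\layer{A}$ and $y\in\layer{B}$. By the working form, such a reverse comparison exists for a given $x$ exactly when some $y\in\layer{B}$ satisfies $y<\rho_{A,B}(x)$. So the second part says precisely that each $\rho_{A,B}(x)$ is minimal in $\layer{B}$.

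For item~\ref{item:dlex-cross-block-total}, fix $A<_{\rm m}B$. Then $x$ and $y$ are comparable iff $\rho_{A,B}(x)\le y$ or $y<\rho_{A,B}(x)$, and this is the same as $\rho_{A,B}(x)$ and $y$ being comparable in $\layer{B}$.

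The concluding summary follows by bookkeeping. Skeleton orientation already implies skeleton-supportedness, so the orientation package is exactly item~\ref{item:dlex-skeleton-oriented}. The cross-block-totality package is skeleton-supportedness together with cross-block totality, that is, items~\ref{item:dlex-skeleton-supported} and~\ref{item:dlex-cross-block-total}. The strict upper-shadow package reduces to item~\ref{item:dlex-skeleton-supported}, because its reflection part holds automatically by item~\ref{item:dlex-shadow-automatic}.

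The only point that needs care is the strict-versus-nonstrict bookkeeping in item~\ref{item:dlex-shadow-automatic}: passing from strictness below $\rho_{A,B}(x)$ to strictness below every $\lambda^{A,B}_q(x)$. Everything else is a direct reading of the clauses of the working form.
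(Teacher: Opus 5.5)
Your proposal is correct and follows the paper's proof essentially step by step. Both arguments reduce every item to the clauses of Remark~\ref{rem:ordinary-dlex-working-form}, together with the fact that $\rho_{A,B}=\lambda^{A,B}_{e_B}$ is the pointwise least resolved transport. Your explicit exclusion of $y=\lambda^{A,B}_{q}(x)$ in item~\ref{item:dlex-shadow-automatic} simply spells out the step the paper compresses into a single ``therefore''.
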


\begin{proof}
Let $A<_{\rm m}B$, $x\in\layer{A}$, and $y\in\layer{B}$.
By Remark~\ref{rem:ordinary-dlex-working-form},
\[
y\le x
\quad\Longleftrightarrow\quad
y<_{B}\rho_{A,B}(x).
\]
Moreover, $\rho_{A,B}=\lambda^{A,B}_{e_B}$ is the pointwise least member
of the resolved-transport family.
Therefore
\[
y<_{B}\rho_{A,B}(x)
\quad\Longleftrightarrow\quad
y<_{B}\lambda^{A,B}_{q}(x)\text{ for every }q\in B.
\]
This proves item~\ref{item:dlex-shadow-automatic}.

Suppose that $A\parallel B$, and put $C=A\vee B$.
By the incomparable-block case of
Remark~\ref{rem:ordinary-dlex-working-form},
\[
x\le y
\quad\Longleftrightarrow\quad
\rho_{A,C}(x)<_{C}\rho_{B,C}(y),
\]
and symmetrically
\[
y\le x
\quad\Longleftrightarrow\quad
\rho_{B,C}(y)<_{C}\rho_{A,C}(x).
\]
Thus there is no comparison between elements from incomparable skeleton
blocks exactly when their two join-component shadows are not strictly
comparable.
This proves item~\ref{item:dlex-skeleton-supported}.

For $A<_{\rm m}B$, the only comparisons forbidden by skeleton orientation
are reverse comparisons from $\layer{B}$ to $\layer{A}$.
Under directed lexicographicity,
\[
y\le x
\quad\Longleftrightarrow\quad
y<_{B}\rho_{A,B}(x).
\]
Such reverse comparisons are absent for every $y\in\layer{B}$ exactly when
$\rho_{A,B}(x)$ is minimal in $\layer{B}$.
Together with item~\ref{item:dlex-skeleton-supported}, this proves
item~\ref{item:dlex-skeleton-oriented}.

Finally, for $A<_{\rm m}B$,
\[
x\le y
\quad\Longleftrightarrow\quad
\rho_{A,B}(x)\le y,
\qquad
y\le x
\quad\Longleftrightarrow\quad
y<_{B}\rho_{A,B}(x).
\]
Hence $x$ and $y$ are comparable exactly when
$\rho_{A,B}(x)$ and $y$ are comparable inside $\layer{B}$.
This proves item~\ref{item:dlex-cross-block-total}.
The final three assertions now follow from
items~\ref{item:dlex-shadow-automatic}--\ref{item:dlex-cross-block-total}.
\end{proof}

\begin{corollary}[$\tau$-block-linear directed-lexicographic orders]
\label{cor:block-linear-dlex-order-packages}
Under the hypotheses of
Proposition~\ref{prop:dlex-three-order-recovery-packages}, suppose that
$\mathbf S$ is $\tau$-block-linear.
Then the ambient order is automatically skeleton-supported and strict
upper-shadow reflective.
Moreover:
\begin{enumerate}
\item it is skeleton-oriented if and only if every proper transition image
$\rho_{A,B}(x)$ is minimal in its receiving component;
\item it is cross-block total along the skeleton if and only if every proper
transition image is comparable with every element of its receiving component.
\end{enumerate}
In particular, if every receiving component is a chain, then the ambient order
is cross-block total along the skeleton.
\end{corollary}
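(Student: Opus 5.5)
The corollary follows from Proposition~\ref{prop:dlex-three-order-recovery-packages} once $\tau$-block-linearity is fed in.

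\emph{Step 1: the automatic packages.} Since $\le_{\rm m}$ is a chain, there are no incomparable pairs $A\parallel B$. The condition in item~\ref{item:dlex-skeleton-supported} is therefore vacuous, and that item gives skeleton-supportedness. Strict upper-shadow reflection is item~\ref{item:dlex-shadow-automatic}, which holds with no extra hypothesis.

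\emph{Step 2: the two characterizations.}
\begin{enumerate}
\item For skeleton orientation, item~\ref{item:dlex-skeleton-oriented} asks for the vacuous incomparable-block condition together with minimality of every $\rho_{A,B}(x)$ in $\layer{B}$ for $A<_{\rm m}B$. This is exactly the first stated equivalence.
\item For cross-block totality, item~\ref{item:dlex-cross-block-total} says this holds if and only if, for $A<_{\rm m}B$, each $\rho_{A,B}(x)$ is comparable in $\layer{B}$ with every $y\in\layer{B}$. These $\rho_{A,B}(x)$ are precisely the proper transition images, and their receiving components are the $\layer{B}$ with $B\in\mathcal R$.
\end{enumerate}

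\emph{Step 3: the final claim.} If every receiving component $\layer{B}$, $B\in\mathcal R$, is a chain, then any proper transition image $\rho_{A,B}(x)\in\layer{B}$ is comparable with every $y\in\layer{B}$. The second characterization then gives cross-block totality along the skeleton. One point needs care: comparability in item~\ref{item:dlex-cross-block-total} refers to the component order on $\layer{B}$, and this is the restricted ambient order, so the chain hypothesis applies directly.

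No step is a real obstacle. The only thing to watch is that the quantifiers "every proper transition image" and "its receiving component" match the quantification over $A<_{\rm m}B$ and $x\in\layer{A}$ in Proposition~\ref{prop:dlex-three-order-recovery-packages}.
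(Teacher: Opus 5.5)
Your proposal is correct and follows the paper's proof. $\tau$-block-linearity makes the incomparable-block condition of the preceding proposition vacuous, and the two characterizations are then read off from its skeleton-orientation and cross-block-totality items. You are slightly more explicit than the paper: you cite the strict upper-shadow item directly, and you spell out the chain-component consequence, both of which the paper's proof leaves implicit.
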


\begin{proof}
There are no incomparable canonical skeleton blocks, so the condition in
Proposition~\ref{prop:dlex-three-order-recovery-packages}
\eqref{item:dlex-skeleton-supported} is vacuous.
The remaining assertions follow from
items~\eqref{item:dlex-skeleton-oriented} and
\eqref{item:dlex-cross-block-total}.
\end{proof}

\begin{remark}[Order recovery under the directed lexicographic rule]
\label{rem:dlex-collapse-full-order-recovery}
In particular, whenever directed lexicographicity holds, the component orders,
the canonical skeleton, and the least-element collapse maps determine every
ambient comparison through the four cases displayed in
Remark~\ref{rem:ordinary-dlex-working-form}.
Proposition~\ref{prop:dlex-orderhood-criteria} gives intrinsic conditions
under which the directed lexicographic relation is a partial or total order,
and Corollary~\ref{cor:total-order-dlex-automatic} shows that it necessarily
coincides with the ambient order in the totally ordered collapse case.
Outside such situations, coincidence with the ambient order remains an
additional order-side condition; no claim is made that arbitrary
least-element-collapse data make $\le_{\mathrm{dlex}}$ an ordered-semigroup
order.
\end{remark}

The remaining implications among the least-element-collapse order principles are as follows.

\begin{proposition}[Implications among the least-element-collapse order principles]
\label{prop:least-element-collapse-order-implications}
Assume that the least-element collapse is defined.
Then the following statements hold.
\begin{enumerate}
\item\label{item:cross-total-implies-shadow}
Cross-block totality along the skeleton implies strict upper-shadow reflection along the skeleton.

\item\label{item:shadow-implies-cross-total}
Suppose that every receiving component is a chain, in the sense that $\layer{B}$ is a chain whenever $A<_{\rm m}B$ for some skeleton block $A$.
Then strict upper-shadow reflection along the skeleton implies cross-block totality along the skeleton.

\item\label{item:chain-skeleton-shadow-dlex}
If $\mathbf S$ is $\tau$-block-linear, then strict upper-shadow
reflection is equivalent to directed lexicographicity.
If, in addition, every receiving component is a chain, then these two properties are also equivalent to cross-block totality along the skeleton.

\item\label{item:totality-characterization}
The ambient order is total if and only if $\mathbf S$ is
$\tau$-block-linear, every component is a chain, and the order is
cross-block total along the canonical skeleton.
Under the two chain conditions, cross-block totality may equivalently be replaced by strict upper-shadow reflection or by directed lexicographicity.
\end{enumerate}
\end{proposition}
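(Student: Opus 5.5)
Throughout, I fix $A<_{\rm m}B$, $x\in\layer{A}$, $y\in\layer{B}$. The basic tool is that $\rho_{A,B}(x)=xe_B$ is the pointwise least member of the resolved family (Proposition~\ref{prop:least-element-collapse}). Consequently the strict-shadow condition ``$y<_B\lambda^{A,B}_q(x)$ for every $q\in B$'' is equivalent to $y<_B\rho_{A,B}(x)$. The forward direction is immediate at $q=e_B$. For the converse, $\rho_{A,B}(x)\le\lambda^{A,B}_q(x)$ gives $y<\lambda^{A,B}_q(x)$ by transitivity.

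By Theorem~\ref{thm:resolved-order-recovery}, $x\le y$ holds exactly when $\rho_{A,B}(x)\le y$. The forward implication is that theorem at $q=\tau(y)$ combined with $xe_B\le x\tau(y)$. The reverse uses positivity of $e_B$.

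For item~\eqref{item:cross-total-implies-shadow}, the implication ``$y\le x\Rightarrow$ shadow'' is the automatic one proved after Definition~\ref{def:strict-upper-shadow-reflection}. Conversely, suppose $y<_B\rho_{A,B}(x)$. If $x\le y$, then $xe_B\le y<xe_B$, which is impossible. Cross-block totality then forces $y\le x$.

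For item~\eqref{item:shadow-implies-cross-total}, suppose $x\nleq y$. Then $\rho_{A,B}(x)\nleq y$, and since $\layer{B}$ is a chain, $y<_B\rho_{A,B}(x)$. Reflection then gives $y\le x$.

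For item~\eqref{item:chain-skeleton-shadow-dlex}, block-linearity leaves only the three comparable cases of Remark~\ref{rem:ordinary-dlex-working-form}. The equal-block case agrees trivially. The lower-to-higher clause $\rho_{A,B}(x)\le y$ agrees with the ambient order by the observation above. The reverse clause $y<_B\rho_{A,B}(x)$ agrees with the ambient order exactly when strict upper-shadow reflection holds, using the reformulation of the shadow condition. This gives equivalence in both directions; Proposition~\ref{prop:dlex-three-order-recovery-packages}\eqref{item:dlex-shadow-automatic} also supplies the direction from directed lexicographicity. Adding chain receiving components, items~\eqref{item:cross-total-implies-shadow} and~\eqref{item:shadow-implies-cross-total} give the equivalence with cross-block totality.

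For item~\eqref{item:totality-characterization}, the forward direction goes as follows. Totality gives positive-idempotent linearity, hence $\tau$-block-linearity by Proposition~\ref{prop:m-skeleton-join}. Components are chains as suborders, and cross-block totality is trivial. Conversely, take $x\in\layer{A}$, $y\in\layer{B}$:
\begin{enumerate}
\item if $A=B$, the component chain decides the comparison;
\item if $A\ne B$, block-linearity makes $A,B$ comparable, and cross-block totality decides the comparison.
\end{enumerate}
Under the two chain conditions, all receiving components are chains. Item~\eqref{item:chain-skeleton-shadow-dlex} then permits replacing cross-block totality by either of the other two principles.

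The main obstacle is to make the reduction from the family-wide shadow quantifier to the single least transport $\rho_{A,B}$ precise. This step is also where lower-pointedness is genuinely used. It must be set up once at the start and reused in every item.
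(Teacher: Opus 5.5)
Your proposal is correct and follows essentially the same route as the paper. Both reduce the lower-to-higher test and the family-wide strict-shadow quantifier to the single least transport $\rho_{A,B}=\lambda^{A,B}_{e_B}$, then settle the four items by the same case arguments. The only cosmetic difference is in item~(1): you cite the automatic direction proved after Definition~\ref{def:strict-upper-shadow-reflection}, whereas the paper re-derives it via $y=ye_B\le xe_B=\rho_{A,B}(x)$.
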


\begin{proof}
For comparable blocks $A<_{\rm m}B$, the least-element-collapse form of Theorem~\ref{thm:resolved-order-recovery} is
\[
x\le y
\quad\Longleftrightarrow\quad
\rho_{A,B}(x)\le y
\qquad(x\in\layer{A},\ y\in\layer{B}).
\]
Moreover, because $\rho_{A,B}=\lambda^{A,B}_{e_B}$ is the pointwise least member of the resolved-transport family,
\[
y<_{B}\lambda^{A,B}_{q}(x)\text{ for every }q\in B
\quad\Longleftrightarrow\quad
y<_{B}\rho_{A,B}(x).
\]

For item~\ref{item:cross-total-implies-shadow}, let $A<_{\rm m}B$, $x\in\layer{A}$, and $y\in\layer{B}$.
If $y\le x$, isotonicity after multiplication by $e_B$ gives
\[
y=ye_B\le xe_B=\rho_{A,B}(x).
\]
Equality would imply $\rho_{A,B}(x)\le y$, hence $x\le y$, contradicting antisymmetry because the two components are disjoint.
Therefore $y<_{B}\rho_{A,B}(x)$.
Conversely, if $y<_{B}\rho_{A,B}(x)$, then the lower-to-higher test shows that $x\nleq y$.
Cross-block totality now forces $y\le x$.
Hence strict upper-shadow reflection holds.

For item~\ref{item:shadow-implies-cross-total}, let $A<_{\rm m}B$, $x\in\layer{A}$, and $y\in\layer{B}$.
Since the receiving component $\layer{B}$ is a chain, either $\rho_{A,B}(x)\le y$ or $y<_{B}\rho_{A,B}(x)$.
The lower-to-higher test gives $x\le y$ in the first case, while strict upper-shadow reflection gives $y\le x$ in the second.
Thus every pair from comparable distinct blocks is comparable.

Suppose that $\mathbf S$ is $\tau$-block-linear.
Directed lexicographicity implies strict upper-shadow reflection by
Proposition~\ref{prop:dlex-three-order-recovery-packages}
\eqref{item:dlex-shadow-automatic}.
Conversely, assume strict upper-shadow reflection.
There are no incomparable canonical skeleton blocks.
Inside one component and from a lower block to a higher block, the ambient
order agrees with the corresponding clauses of
Remark~\ref{rem:ordinary-dlex-working-form}; along comparable blocks, strict
upper-shadow reflection gives the remaining higher-to-lower clause.
Hence the ambient order is directed lexicographic.
This proves the first assertion of item~\ref{item:chain-skeleton-shadow-dlex};
its second assertion follows from
items~\ref{item:cross-total-implies-shadow} and
\ref{item:shadow-implies-cross-total}.

If the ambient order is total, every component is a chain, cross-block
totality is automatic, and $\mathbf S$ is $\tau$-block-linear by
Proposition~\ref{prop:m-skeleton-join}.
Conversely, assume that $\mathbf S$ is $\tau$-block-linear, that all
components are chains, and that cross-block totality holds.
Two elements in the same component are comparable by the component-chain hypothesis, while two elements in distinct components lie in comparable skeleton blocks and are comparable by cross-block totality.
Hence the ambient order is total.
The final equivalences in item~\ref{item:totality-characterization} follow from item~\ref{item:chain-skeleton-shadow-dlex}.
\end{proof}

The unit-constant case further links the absence of reverse comparisons to endpoint identities in the receiving components.

\begin{remark}[Skeleton-orientation and endpoint units in the unit-constant collapse]
\label{rem:orientation-and-least-units}
Assume that $A<_{\rm m}B$, that the least-element-collapse transition from $\layer{A}$ to $\layer{B}$ is unit-constant, and that $e_B$ is the identity of $\layer{B}$:
\[
\rho_{A,B}(x)=e_B
\qquad(x\in\layer{A}).
\]
Under the directed-lexicographic comparison rule, an element $y\in\layer{B}$ lies below $x\in\layer{A}$ exactly when
$
y<\rho_{A,B}(x)=e_B.
$
Consequently, there is no reverse comparison from $\layer{B}$ to $\layer{A}$ exactly when $e_B$ is minimal in $\layer{B}$.
If $\layer{B}$ is a chain, minimality is equivalent to $e_B$ being the least element of $\layer{B}$.

Thus, when $\mathbf S$ is $\tau$-block-linear and all receiving
components are chains, skeleton orientation in the unit-constant
directed-lexicographic setting is equivalent to the identity of every
receiving component being its least element.
Without the assumption of $\tau$-block-linearity, the same conclusion
additionally requires the exclusion of comparisons between incomparable
canonical skeleton blocks, for example by skeleton-supportedness.
\end{remark}

The resulting mixed-product rule has a classical analogue.

\begin{remark}[Comparison with Clifford ordinal sums]
\label{rem:clifford-ordinal-sums-unit-constant-case}
The unit-constant least-element collapse produces the same mixed-product absorption law that appears in Clifford's ordinal-sum theory for naturally totally ordered commutative semigroups \cite{Clifford1954,Clifford1958}.
Indeed, suppose that $A<_{\rm m}B$, $x\in\layer{A}$, and $y\in\layer{B}$, and assume that
\[
\rho_{A,B}(x)=e_B.
\]
Then
\[
xy=\rho_{A,B}(x)y=e_By=y.
\]
Thus the higher component absorbs the lower component in mixed products.

This agreement of the mixed-product rule does not by itself make the resulting decomposition a Clifford ordinal sum.
Such an identification additionally requires the relevant classical hypotheses, including commutativity, an appropriate chain structure, naturality or totality of the order, and compatible endpoint conventions.
Accordingly, unit-constant least-element collapse should be viewed here as a Clifford-type absorption special case of the resolved-transport mechanism.
\end{remark}

The examples in Appendix~\ref{sec:dependence-order-recovery-principles} show that strict upper-shadow reflection need not imply cross-block totality without the hypothesis that every receiving component is a chain, and that skeleton orientation is independent of the chain-skeleton order principles.

The preceding sections prove the reconstruction ingredients separately.
At the $\Cm$-level, the quotient skeleton and resolved transports recover multiplication and all lower-to-higher comparisons.
The order-recovery packages of Section~\ref{sec:canonical-resolved-transport} specify when the same data recover the remaining comparisons.
The theorem below collects these facts into the main reconstruction statement for the present paper.

\section{The canonical resolved-transport reconstruction theorem}
\label{sect:canonical-resolved-decomposition}

\begin{theorem}[Canonical resolved-transport decomposition and reconstruction]
\label{thm:canonical-resolved-decomposition}
Let $\mathbf S$ be a local-unit-aligned ordered semigroup.
Then $\mathbf S$ has the $\tau$-cohesive decomposition $\Ltau(\mathbf S)$, induced by the finest $\tau$-stable partition $\Ctau(\mathbf S)$, and the canonical $\tau$-multiplication-coherent decomposition $\Lm(\mathbf S)$, induced by the finest $\tau$-multiplication-coherent partition $\Cm(\mathbf S)$.

The partition $\Cm(\mathbf S)$ is the multiplicative coherentization threshold: it is the finest $\tau$-multiplication-coherent partition of $\PosId(\mathbf S)$, equivalently, the finest partition for which the block containing $\tau(xy)$ is determined by the blocks containing $\tau(x)$ and $\tau(y)$.
Its blocks form a join-semilattice and index the canonical resolved-transport system $\mathfrak R(\mathbf S)$.
For $A,B\in\Cm(\mathbf S)$, put $C:=A\vee B$.
For $x\in\layer{A}$ and $y\in\layer{B}$, the product is recovered by
\[
xy=
\min\{\lambda^{A,C}_{q}(x)\lambda^{B,C}_{q}(y):q\in C\},
\]
computed inside $\layer{C}$.
Thus $\mathfrak R(\mathbf S)$ reconstructs the ambient multiplication.

The same data recover every lower-to-higher comparison.
If $A\le_{\rm m}B$, $x\in\layer{A}$, and $y\in\layer{B}$, then
\[
x\le y
\quad\Longleftrightarrow\quad
\lambda^{A,B}_{q}(x)\le y\text{ for some }q\in B.
\]
No additional order hypothesis is needed for this part of the reconstruction.
\end{theorem}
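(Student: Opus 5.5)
The theorem collects results already proved, so the proof will consist of pointing to them in the order the statement lists its claims.

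First, the decompositions. The existence of $\Ltau(\mathbf S)$ as the decomposition induced by the finest $\tau$-stable partition is Theorem~\ref{thm:finest-tau-stable}. For $\Lm(\mathbf S)$, Theorem~\ref{thm:finest-coherent} shows that $\Cm(\mathbf S)$ is the finest $\tau$-multiplication-coherent partition. The description ``the block of $\tau(xy)$ is determined by the blocks of $\tau(x)$ and $\tau(y)$'' just restates Definition~\ref{def:m-output-blocks}. I will spell out the reason explicitly: coherence means $\operatorname{Out}(A,B)$ is a singleton for all blocks, and $\tprod{A}{B}$ ranges exactly over the values $\tau(xy)$ with $x\in\layer{A}$ and $y\in\layer{B}$. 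Finally, the join-semilattice structure of the blocks is Lemma~\ref{lem:m-block-product} together with Proposition~\ref{prop:m-skeleton-join}. That the blocks index $\mathfrak R(\mathbf S)$ is Definition~\ref{def:canonical-resolved-transport-homomorphisms}.

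Second, the product formula. I will apply Theorem~\ref{thm:resolved-product-recovery} with $\mathcal P=\Cm(\mathbf S)$, noting that $\vee_{\mathcal P}$ coincides with $\vee$ and $\le_{\mathcal P}$ with $\le_{\rm m}$. To make the argument self-contained, I will recall its two steps:
\begin{enumerate}
\item By centrality and idempotence of $q$ (Proposition~\ref{prop:positive-idempotents-central}), $(xq)(yq)=xyq\ge xy$.
\item The choice $q=\tau(xy)\in C$, which is legitimate by Lemma~\ref{lem:m-block-product}, attains $xy$.
\end{enumerate}
Each term lies in $\layer{C}$ by Proposition~\ref{prop:resolved-transport-basic}\eqref{item:ct-well-defined}, so the minimum is computed inside $\layer{C}$.

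Third, the order statement. I will apply Theorem~\ref{thm:resolved-order-recovery} to $\Cm(\mathbf S)$. In one direction, take $q=\tau(y)\in B$ and use isotonicity: $x\le y$ gives $xq\le y\tau(y)=y$. In the other direction, positivity of $q$ gives $x\le xq\le y$.

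There is no genuine obstacle here. The only point needing care is the identification of the abstract quotient-skeleton data for $\mathcal P=\Cm(\mathbf S)$ with the canonical notation $\vee$, $\le_{\rm m}$ and $\lambda^{A,B}_q$, which holds by definition.
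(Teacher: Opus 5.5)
Your proposal is correct and is essentially the paper's proof, which likewise just cites Theorems~\ref{thm:finest-tau-stable} and~\ref{thm:finest-coherent}, Lemma~\ref{lem:m-block-product} with Proposition~\ref{prop:m-skeleton-join}, Definition~\ref{def:canonical-resolved-transport-homomorphisms}, and Theorems~\ref{thm:resolved-product-recovery} and~\ref{thm:resolved-order-recovery} specialized to $\mathcal P=\Cm(\mathbf S)$. One small attribution slip in your recap: centrality and idempotence of $q$ give only $(xq)(yq)=xyq$, while the inequality $xyq\ge xy$ comes from the positivity of $q$.
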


\begin{proof}
The existence and canonical nature of $\Ltau(\mathbf S)$ and $\Lm(\mathbf S)$ are Theorems~\ref{thm:finest-tau-stable} and~\ref{thm:finest-coherent}.
For $\Cm(\mathbf S)$, Lemma~\ref{lem:m-block-product} and Proposition~\ref{prop:m-skeleton-join} give the block join, while Definition~\ref{def:canonical-resolved-transport-homomorphisms} gives $\mathfrak R(\mathbf S)$.
The product formula is Theorem~\ref{thm:resolved-product-recovery}, and the lower-to-higher comparison test is Theorem~\ref{thm:resolved-order-recovery}.
\end{proof}

\begin{corollary}[Full order recovery]
\label{cor:canonical-full-order-recovery}
In addition to the hypotheses of Theorem~\ref{thm:canonical-resolved-decomposition}, assume that $\mathbf S$ satisfies one of the following conditions:
\begin{enumerate}
\item $\mathbf S$ is skeleton-oriented;
\item $\mathbf S$ is skeleton-supported and cross-block total along the skeleton;
\item $\mathbf S$ is skeleton-supported and strict upper-shadow reflective along the skeleton.
\end{enumerate}
Then $\mathfrak R(\mathbf S)$ reconstructs the full ordered semigroup $\langle S,\le,\cdot\rangle$.
\end{corollary}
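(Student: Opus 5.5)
The proof is an assembly of results already established. I will treat multiplication and order separately, and then observe that together they determine the triple $\langle S,\le,\cdot\rangle$.

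For multiplication I will invoke Theorem~\ref{thm:canonical-resolved-decomposition}, which specializes Theorem~\ref{thm:resolved-product-recovery} to $\mathcal P=\Cm(\mathbf S)$. For any $x\in\layer{A}$ and $y\in\layer{B}$, the product $xy$ is the minimum, taken in $\layer{A\vee B}$, of the transported products. Every ingredient of this formula is part of $\mathfrak R(\mathbf S)$:
\begin{itemize}
\item the block join $\vee$,
\item the component orders and multiplications,
\item the maps $\lambda^{A,C}_q$.
\end{itemize}
The underlying set $S$ is the disjoint union of the components $\layer{A}$, since $\Cm(\mathbf S)$ partitions $\PosId(\mathbf S)$ and every $x$ satisfies $\tau(x)\in\PosId(\mathbf S)$. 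Hence the multiplication is fully recovered, with no order hypothesis needed.

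For the order I will split into the three listed hypotheses and cite the corresponding recovery result in each case:
\begin{itemize}
\item Case (1), skeleton orientation, is Corollary~\ref{cor:oriented-full-order-recovery}.
\item Case (2), skeleton-supportedness with cross-block totality, is Proposition~\ref{prop:cross-block-totality-recovery}.
\item Case (3), skeleton-supportedness with strict upper-shadow reflection, is Proposition~\ref{prop:strict-upper-shadow-recovery}.
\end{itemize}
Each of these results decides $x\le y$ for arbitrary $x\in\layer{A}$ and $y\in\layer{B}$ by exhausting the four block configurations: $A=B$, $A<_{\rm m}B$, $B<_{\rm m}A$, and $A\parallel B$. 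In every configuration the test uses only component data, the skeleton order $\le_{\rm m}$, and the resolved transports.

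The one point I will check explicitly, since it is the only possible gap, is that these decision rules use nothing outside $\mathfrak R(\mathbf S)$. The hypotheses themselves (orientation, support, totality, reflection) serve only as known facts that justify the rule; they are not extra data to be recorded. For example, in case (2) with $B<_{\rm m}A$, the test is the negation of the lower-to-higher test from Theorem~\ref{thm:resolved-order-recovery}, which is computable from $\mathfrak R(\mathbf S)$. In case (3) with $B<_{\rm m}A$, the test is a strict comparison inside $\layer{A}$ against the transports $\lambda^{B,A}_q(y)$.

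With this check done, the order and the multiplication are both functions of $\mathfrak R(\mathbf S)$ on the recovered set $S$, and so the full ordered semigroup is reconstructed. There is no real obstacle in this argument; the substantive work lies in the cited propositions.
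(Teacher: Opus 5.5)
Your proposal is correct and is the paper's own proof: it combines Corollary~\ref{cor:oriented-full-order-recovery}, Proposition~\ref{prop:cross-block-totality-recovery}, or Proposition~\ref{prop:strict-upper-shadow-recovery} with the multiplication recovery of Theorem~\ref{thm:canonical-resolved-decomposition}. Your extra check that each decision rule uses only data in $\mathfrak R(\mathbf S)$ is a reasonable elaboration; one small correction is that the skeleton-oriented case has three cases, not four, since it merges $B<_{\rm m}A$ and $A\parallel B$ into the single case $A\not\le_{\rm m}B$.
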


\begin{proof}
Apply Corollary~\ref{cor:oriented-full-order-recovery}, Proposition~\ref{prop:cross-block-totality-recovery}, or Proposition~\ref{prop:strict-upper-shadow-recovery}, respectively, together with the multiplication recovery in Theorem~\ref{thm:canonical-resolved-decomposition}.
\end{proof}

\begin{corollary}[Canonical recovery from componentwise order duality]
\label{cor:canonical-order-duality-recovery}
Suppose that $\mathbf S$ is skeleton-supported and carries a component-preserving order anti-automorphism $\nu$.
Then $\mathfrak R_{\nu}(\mathbf S)$ reconstructs the full expanded ordered semigroup $\langle S,\le,\cdot,\nu\rangle$.
If $\mathbf S$ is $\tau$-block-linear, skeleton-supportedness is automatic.
\end{corollary}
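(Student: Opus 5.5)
This corollary is essentially a repackaging of Proposition~\ref{prop:antiautomorphism-full-order-recovery} and Corollary~\ref{cor:antiautomorphism-chain-skeleton-recovery}, specialized to the canonical partition $\Cm(\mathbf S)$. The plan is to reduce it to those results together with Theorem~\ref{thm:canonical-resolved-decomposition}, and to do no new computation.

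First I will recall that $\mathfrak R_{\nu}(\mathbf S)$ consists of $\mathfrak R(\mathbf S)$ together with the restrictions $\nu\!\upharpoonright_{\layer{A}}$. Component preservation is exactly what makes each restriction a map $\layer{A}\to\layer{A}$, so these are legitimate componentwise data. Multiplication is then recovered from $\mathfrak R(\mathbf S)$ alone by the product formula in Theorem~\ref{thm:canonical-resolved-decomposition}, which is Theorem~\ref{thm:resolved-product-recovery} applied to $\mathcal P=\Cm(\mathbf S)$.

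For the order, I will fix $x\in\layer{A}$ and $y\in\layer{B}$ and split into the four cases used in Proposition~\ref{prop:antiautomorphism-full-order-recovery}.
\begin{enumerate}
\item If $A=B$, the comparison is given by the component order.
\item If $A<_{\rm m}B$, I use the lower-to-higher test of Theorem~\ref{thm:canonical-resolved-decomposition}.
\item If $B<_{\rm m}A$, I rewrite $x\le y$ as $\nu(y)\le\nu(x)$. Here $\nu(y)\in\layer{B}$ and $\nu(x)\in\layer{A}$ by component preservation, so this is again a lower-to-higher comparison and is decided by the transports $\lambda^{B,A}_{q}$.
\item If $A$ and $B$ are incomparable, skeleton-supportedness gives $x\nleq y$.
\end{enumerate}
Since $\nu$ itself is part of the data, this reconstructs $\langle S,\le,\cdot,\nu\rangle$.

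For the final sentence, $\tau$-block-linearity means that $\le_{\rm m}$ is a chain, so there are no incomparable blocks and the hypothesis of Definition~\ref{def:skeleton-supported-order} holds vacuously. This is Corollary~\ref{cor:antiautomorphism-chain-skeleton-recovery}.

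The only point needing care is the reverse case: I must check that the order anti-automorphism property is used in the correct direction, and that component preservation places $\nu(y)$ in the lower block, not merely in some block. Beyond that the argument is bookkeeping.
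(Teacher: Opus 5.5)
Your proposal is correct and takes the same approach as the paper. The paper's proof simply cites Proposition~\ref{prop:antiautomorphism-full-order-recovery}, Corollary~\ref{cor:antiautomorphism-chain-skeleton-recovery}, and Theorem~\ref{thm:canonical-resolved-decomposition}, and your four-case unpacking reproduces that proposition's argument, with the anti-automorphism direction and the placement of $\nu(y)$ in the lower component handled correctly.
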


\begin{proof}
This is Proposition~\ref{prop:antiautomorphism-full-order-recovery} and Corollary~\ref{cor:antiautomorphism-chain-skeleton-recovery}, together with Theorem~\ref{thm:canonical-resolved-decomposition}.
\end{proof}

The specialization in which all proper receiving blocks are lower-pointed and their least idempotents are skeleton-monotone has already been separated from these unconditional reconstruction results.
Proposition~\ref{prop:least-element-collapse} extracts an ordinary direct system of ordered semigroups under these hypotheses, Corollary~\ref{cor:least-element-collapse-product} gives its product rule, and directed lexicographicity supplies the additional order-recovery condition of Remark~\ref{rem:dlex-collapse-full-order-recovery}.

\begin{corollary}[Totally ordered case]\label{cor:linearly-ordered-case}
If $\mathbf S$ is a totally ordered local-unit-aligned semigroup, then the canonical resolved-transport system $\mathfrak R(\mathbf S)$ reconstructs the full ordered semigroup $\langle S,\le,\cdot\rangle$.
\end{corollary}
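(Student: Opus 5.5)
The plan is to deduce this from Corollary~\ref{cor:canonical-full-order-recovery}, using its second alternative: skeleton-supportedness together with cross-block totality along the skeleton. Multiplication is already recovered unconditionally by Theorem~\ref{thm:canonical-resolved-decomposition}, through the join formula of Theorem~\ref{thm:resolved-product-recovery}. So it suffices to show that a totally ordered local-unit-aligned semigroup satisfies both order hypotheses of Proposition~\ref{prop:cross-block-totality-recovery}.

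First I will establish skeleton-supportedness. Since the ambient order is a chain, its restriction to $\PosId(\mathbf S)$ is a chain. Hence $\mathbf S$ is positive-idempotent-linear, and Proposition~\ref{prop:m-skeleton-join} shows that $\mathbf S$ is $\tau$-block-linear. There are therefore no incomparable blocks in $\mathsf S_{\mathrm m}(\mathbf S)$, and the implication in Definition~\ref{def:skeleton-supported-order} holds vacuously.

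Second, cross-block totality along the skeleton (Definition~\ref{def:cross-block-totality}) is immediate. Any two elements of $S$ are comparable, in particular those lying in layers $\layer{A}$ and $\layer{B}$ with $A<_{\rm m}B$.

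With both hypotheses in place, Proposition~\ref{prop:cross-block-totality-recovery} recovers every comparison. Comparisons inside a component use the component order. Lower-to-higher comparisons use the test of Theorem~\ref{thm:resolved-order-recovery}. A higher-to-lower comparison holds exactly when the lower-to-higher test fails, and incomparable blocks do not occur. Combined with multiplication recovery, this reconstructs $\langle S,\le,\cdot\rangle$.

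There is no real obstacle here. The only point needing care is that block-linearity comes from linearity of the skeleton $\Sk(\mathbf S)$ through Proposition~\ref{prop:m-skeleton-join}, not from an assumption about the quotient order. That proposition already handles it: the join $pq=\max\{p,q\}$ forces $A\vee B\in\{A,B\}$.
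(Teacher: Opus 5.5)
Your proposal is correct and matches the paper's own proof essentially step for step. Totality gives positive-idempotent linearity, hence $\tau$-block-linearity by Proposition~\ref{prop:m-skeleton-join}. So skeleton-supportedness holds vacuously, cross-block totality is immediate, and the second alternative of Corollary~\ref{cor:canonical-full-order-recovery} applies.
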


\begin{proof}
If $S$ is totally ordered, then $\PosId(\mathbf S)$ is totally ordered.
By Proposition~\ref{prop:m-skeleton-join}, $\mathbf S$ is
$\tau$-block-linear.
Thus its canonical skeleton has no incomparable blocks, so
skeleton-supportedness is automatic.
Moreover, any two elements in distinct blocks are comparable in the ambient order, so the order is cross-block total along the skeleton.
The second condition in Corollary~\ref{cor:canonical-full-order-recovery} applies.
\end{proof}

Say that the canonical resolved-transport systems
$\mathfrak R(\mathbf S)$ and $\mathfrak R(\mathbf N)$ are
\emph{isomorphic} if there exist a join-semilattice isomorphism
$\alpha\colon\Cm(\mathbf S)\to\Cm(\mathbf N)$ and, for each
$A\in\Cm(\mathbf S)$, an ordered-semigroup isomorphism
$\phi_A\colon\layer{A}\to\{z\in N:\tau_{\mathbf N}(z)\in\alpha(A)\}$
such that $\phi_A[A]=\alpha(A)$,
$\phi_A(\tau_{\mathbf S}(x))=\tau_{\mathbf N}(\phi_A(x))$ for every
$x\in\layer{A}$, and, whenever $A\le_{\rm m}B$, $q\in B$, and
$x\in\layer{A}$,
\[
\phi_B\bigl(\lambda^{\mathbf S,A,B}_{q}(x)\bigr)
=
\lambda^{\mathbf N,\alpha(A),\alpha(B)}_{\phi_B(q)}
\bigl(\phi_A(x)\bigr).
\]

\begin{corollary}[Completeness of the canonical data]
\label{cor:canonical-data-complete-invariant}
Let $\mathbf S$ and $\mathbf N$ be local-unit-aligned ordered semigroups
such that, for some $i\in\{1,2,3\}$, both semigroups satisfy
condition~$i$ of Corollary~\ref{cor:canonical-full-order-recovery}.
Then
\[
\mathbf S\cong\mathbf N
\quad\Longleftrightarrow\quad
\mathfrak R(\mathbf S)\cong\mathfrak R(\mathbf N).
\]
Thus, within each of the three order-recovery classes, the canonical
resolved-transport system is a complete invariant up to ordered-semigroup
isomorphism.
\end{corollary}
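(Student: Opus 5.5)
The forward implication is the easy direction, so I will prove it first. Given an ordered-semigroup isomorphism $\Phi\colon\mathbf S\to\mathbf N$, $\Phi$ maps positive idempotents to positive idempotents and commutes with the local-unit map, $\Phi(\tau_{\mathbf S}(x))=\tau_{\mathbf N}(\Phi(x))$, because $\tau$ is defined purely from the order and the multiplication. The construction of $\Cm$ by iterating $\CCm$ from the singleton partition (Definition~\ref{def:Cm-omega}) is likewise transported by $\Phi$. Hence $\alpha(A):=\Phi[A]$ is a bijection $\Cm(\mathbf S)\to\Cm(\mathbf N)$ preserving $\vee$, and the restrictions $\phi_A:=\Phi|_{\layer{A}}$ satisfy all the conditions in the definition of isomorphism of resolved-transport systems. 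The transport compatibility is just $\Phi(xq)=\Phi(x)\Phi(q)$.

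For the converse, I take $\alpha$ and $(\phi_A)_A$ as in the definition and set $\Phi:=\bigcup_A\phi_A$, i.e.\ $\Phi(x)=\phi_{\mathcal C_{\rm m}(x)}(x)$. The components $\layer{A}$ partition $S$, and the targets $\layer{\alpha(A)}$ partition $N$ because $\alpha$ is a bijection of blocks. So $\Phi$ is a well-defined bijection $S\to N$. The proof then shows that $\Phi$ preserves the multiplication, and that it preserves and reflects the order; these two steps are below.

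\textbf{Multiplicativity.} For $x\in\layer{A}$, $y\in\layer{B}$, put $C=A\vee B$, so that $\alpha(C)=\alpha(A)\vee\alpha(B)$. By Theorem~\ref{thm:resolved-product-recovery}, $xy=\min\{\lambda^{A,C}_q(x)\lambda^{B,C}_q(y):q\in C\}$, with the minimum taken in $\layer{C}$. The map $\phi_C$ is an order isomorphism of $\layer{C}$ and a semigroup homomorphism, so it carries this minimum to the minimum of $\{\phi_C(\lambda^{A,C}_q(x))\,\phi_C(\lambda^{B,C}_q(y))\}$. By the transport compatibility this set equals $\{\lambda^{\alpha(A),\alpha(C)}_{\phi_C(q)}(\phi_A x)\,\lambda^{\alpha(B),\alpha(C)}_{\phi_C(q)}(\phi_B y)\}$. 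Since $\phi_C[C]=\alpha(C)$, as $q$ ranges over $C$ the index $\phi_C(q)$ ranges over all of $\alpha(C)$. Applying Theorem~\ref{thm:resolved-product-recovery} in $\mathbf N$, the minimum is $\Phi(x)\Phi(y)$.

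\textbf{Order.} Here I use the fixed condition $i$, which holds in both semigroups, and the explicit case-by-case recipes: Corollary~\ref{cor:oriented-full-order-recovery}, Proposition~\ref{prop:cross-block-totality-recovery} and Proposition~\ref{prop:strict-upper-shadow-recovery}. Each recipe decides $x\le y$ from four ingredients:
\begin{enumerate}
\item the relative position of $A$ and $B$ in the skeleton, which $\alpha$ preserves;
\item component orders, which each $\phi_A$ preserves and reflects;
\item the lower-to-higher test of Theorem~\ref{thm:resolved-order-recovery}, which is an existential statement over $q\in B$ about $\lambda^{A,B}_q(x)\le y$ in $\layer{B}$ and is therefore transported by $\phi_B$ together with the compatibility;
\item for $i=3$, the strict shadow condition ``$<_A$ for every $q$'', which is transported in the same way.
\end{enumerate}
Because the same recipe is valid in both semigroups, $x\le y$ holds iff $\Phi(x)\le\Phi(y)$. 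Thus $\Phi$ is an ordered-semigroup isomorphism.

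The main obstacle is bookkeeping rather than depth. The transport maps are indexed by elements of the receiving block, so one must check that reindexing by $\phi_B$ is surjective onto $\alpha(B)$; the hypothesis $\phi_B[B]=\alpha(B)$ supplies exactly this. The forward direction also needs $\Phi$ to preserve $\Cm$, i.e.\ that the canonical partition is isomorphism-invariant. That follows from the invariance of $\tau$ and of each $\CCm$ step, or alternatively from its characterization in Theorem~\ref{thm:finest-coherent} as the finest coherent partition.
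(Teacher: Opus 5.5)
Your proof is correct and follows the same route as the paper. You glue the componentwise isomorphisms $\phi_A$ into $\Phi$, verify multiplicativity via Theorem~\ref{thm:resolved-product-recovery} and order-preservation via the case-by-case recipe for the fixed condition $i$, and obtain the forward direction from the invariance of $\tau$, $\Cm$, the block join, and the transports under an ordered-semigroup isomorphism; you simply make explicit the bookkeeping the paper leaves implicit, such as $\phi_B[B]=\alpha(B)$ ensuring the reindexing $q\mapsto\phi_B(q)$ is surjective.
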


\begin{proof}
Suppose first that
$\mathfrak R(\mathbf S)\cong\mathfrak R(\mathbf N)$, and define
$\Phi\colon S\to N$ by $\Phi(x)=\phi_A(x)$ for the unique
$A\in\Cm(\mathbf S)$ such that $x\in\layer{A}$.
The maps $\alpha$ and $\phi_A$ preserve the canonical skeleton,
component orders, component multiplications, and resolved transports.
The reconstruction formulas for multiplication and for the relevant
order-recovery package therefore show that $\Phi$ is an
ordered-semigroup isomorphism.

Conversely, an ordered-semigroup isomorphism
$\mathbf S\cong\mathbf N$ preserves the local-unit map, positive
idempotents, the canonical partition $\Cm$, its join operation, the
component ordered semigroups, and the resolved transports.
Its restrictions to the canonical components therefore induce an
isomorphism
$\mathfrak R(\mathbf S)\cong\mathfrak R(\mathbf N)$.
\end{proof}

\begin{remark}\label{rem:rigidity-omitted}
No rigidity, uniqueness of block units, or unit-constancy statement is included in the canonical reconstruction results.
Those conclusions belong to the finite totally ordered lower-pointed situation, in which receiving-minimum monotonicity is automatic, and should not be imported into the present infinite semigroup setting without additional hypotheses.
\end{remark}

\section{Conclusion}

The finest $\tau$-multiplication-coherent partition turns the positive-idempotent skeleton of a local-unit-aligned ordered semigroup into a join-semilattice of canonical components.
The associated resolved transports reconstruct multiplication and every lower-to-higher comparison.
The remaining order is recovered under any of the three explicit cross-component conditions isolated in the paper, and also from a component-preserving order anti-automorphism under skeleton-supportedness.
Balanced involutive residuated expansions provide one specialization of this order-duality mechanism.

Let $\mathcal R$ be the set of blocks receiving proper transitions.
If every block in $\mathcal R$ is lower-pointed and the receiving least idempotents are skeleton-monotone, the resolved system collapses to an ordinary direct system of ordered semigroups.
Its maps reconstruct multiplication, while full order recovery still requires an explicit order-side rule.
In this collapse setting, directed lexicographicity is characterized by
strict upper-shadow reflection together with the join-component comparison
for incomparable blocks; for $\tau$-block-linear semigroups it is
equivalent to strict upper-shadow reflection, and, when every receiving component is a chain, also to cross-block totality.
Totality together with skeleton orientation forces every receiving-component identity to be least, whereas skeleton orientation alone does not.

The examples distinguish the relevant sources of rigidity.
They show that $\tau$-block-linearity does not force receiving-block monotonicity, that resolved-transport data do not determine the missing reverse comparisons, and that the endpoint consequence of skeleton orientation depends essentially on totality.
The appendix extends the independence of skeleton orientation from the
chain-skeleton order principles, and the same-data order ambiguity, to
arbitrary nontrivial chain skeletons with a least element.
It also shows that the receiving-component chain hypothesis in the converse
from strict upper-shadow reflection to cross-block totality cannot be omitted.

The reconstruction obtained here is internal: it concerns the canonical data extracted from a given ordered semigroup. A natural next step is a component-level structure theory relating the two coherentizations. The finer decomposition $\Ltau(\mathbf S)$ records the canonical intrinsically $\tau$-cohesive pieces within the components of $\Lm(\mathbf S)$, whereas $\Lm(\mathbf S)$ records the coarser assemblies on which resolved-transport reconstruction becomes possible. This leads to the problems of characterizing $\tau$-cohesive local-unit-aligned ordered semigroups and determining the admissible multiplication patterns by which such components combine into $\tau$-multiplication-coherent components. A complementary direction is a converse realization theory for abstract resolved-transport systems.

\appendix

\section{Separation models for the order-recovery principles}
\label{sec:dependence-order-recovery-principles}

We use a common algebraic template over an arbitrary nontrivial chain
skeleton with a least element.
The transition type and ambient order are varied; the fifth row uses
non-chain component orders to test the receiving-component chain hypothesis.
The five rows show that the receiving-component chain hypothesis in
Proposition~\ref{prop:least-element-collapse-order-implications}
\eqref{item:shadow-implies-cross-total} cannot be omitted, that skeleton
orientation is independent of the equivalent chain-skeleton principles, and
that the same-data order ambiguity persists over arbitrary nontrivial chain skeletons with a least element.

Let $I$ be a nontrivial chain with least element $0$, let
$P=\mathbb N^2$, and write $0_P=(0,0)$ and $w=(0,1)$. We use both the
lexicographic order $\le_\ell$ and the coordinatewise order $\preceq$ on
$P$; let $\prec$ be the strict part of $\preceq$. Put
$S_I=I\times P$ and $e_i=(i,0_P)$. For $i\le j$ and
$\varepsilon\in\{0,1\}$, define
\[
\eta_\varepsilon^{i,j}(u):=
\begin{cases}
u, & i=j\text{ or }\varepsilon=1,\\
0_P, & i<j\text{ and }\varepsilon=0,
\end{cases}
\qquad
f_\varepsilon^{i,j}(i,u):=(j,\eta_\varepsilon^{i,j}(u)).
\]
For $k=\max\{i,j\}$, set
\[
(i,u)\cdot_\varepsilon(j,v)
:=
\bigl(k,\eta_\varepsilon^{i,k}(u)
       +\eta_\varepsilon^{j,k}(v)\bigr).
\]
Thus the proper maps $f_0^{i,j}$ are zero maps, whereas the proper maps
$f_1^{i,j}$ preserve the $P$-coordinate.

For $i,j\in I$, put $\delta_{ij}=0$ if $i\le j$, and
$\delta_{ij}=1$ if $i>j$. Define five orders on $S_I$ by
\[
\begin{aligned}
(i,u)\le_{\mathrm{dlex_{uc}}}(j,v)
&\iff i<j\ \text{or}\ (i=j\text{ and }u\le_\ell v),\\
(i,u)\le_\uparrow(j,v)
&\iff i\le j\text{ and }u\le_\ell v,\\
(i,u)\le_t(j,v)
&\iff u+t\delta_{ij}w\le_\ell v
\qquad(t=1,2),\\
(i,u)\le_{\mathrm{sh}}(j,v)
&\iff
\bigl(i\le j\text{ and }u\preceq v\bigr)
\ \text{or}\
\bigl(i>j\text{ and }u\prec v\bigr).
\end{aligned}
\]
Write $\le_{\mathrm{dlex_{\uparrow}}}:=\le_1$ and
$\le_{\mathrm{gap}}:=\le_2$, and consider
\begin{enumerate}
\taggeditem{(a)}{item:nontrivial-skeleton-oriented-ordinal}
$\mathbf S_{\mathrm{dlex_{uc}}}^{I}
 :=\langle S_I,\le_{\mathrm{dlex_{uc}}},\cdot_0\rangle$;
\taggeditem{(b)}{ex:non-unit-constant-collapse}
$\mathbf S_{\uparrow}^{I}
 :=\langle S_I,\le_\uparrow,\cdot_1\rangle$;
\taggeditem{(c)}{ex:same-direct-system-different-orders}
$\mathbf S_{\mathrm{dlex_{\uparrow}}}^{I}
 :=\langle S_I,\le_{\mathrm{dlex_{\uparrow}}},\cdot_1\rangle$;
\taggeditem{(d)}{ex:gap-reverse-not-shadow}
$\mathbf S_{\mathrm{gap}}^{I}
 :=\langle S_I,\le_{\mathrm{gap}},\cdot_1\rangle$;
\taggeditem{(e)}{ex:strict-shadow-not-cross-total}
$\mathbf S_{\mathrm{sh}}^{I}
 :=\langle S_I,\le_{\mathrm{sh}},\cdot_1\rangle$.
\end{enumerate}

\begin{lemma}[Common algebraic and collapse data]
\label{lem:common-separation-construction}
The five structures above are commutative local-unit-aligned ordered monoids.
In each of them,
\[
\PosId(\mathbf S)=\{e_i:i\in I\},
\qquad
\tau(i,u)=e_i,
\qquad
\Cm(\mathbf S)=\bigl\{\{e_i\}:i\in I\bigr\}.
\]
Hence the canonical skeleton is canonically isomorphic to $I$, every component is infinite, and the
proper least-element-collapse maps are $f_0^{i,j}$ in row~(a) and
$f_1^{i,j}$ in rows~(b)--(e).
\end{lemma}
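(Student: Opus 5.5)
The plan is to treat each structure as a strong semilattice of copies of the commutative monoid $\langle P,+,0_P\rangle$ over the chain $I$. First establish the algebra for both $\varepsilon\in\{0,1\}$, then the order axioms and isotonicity for the five orders, and only then compute $\tau$, $\PosId$, $\Cm$ and the collapse maps.

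\emph{Algebra.} Each $\eta^{i,j}_\varepsilon$ is a monoid endomorphism of $P$: either the identity or the zero map. They satisfy $\eta^{i,i}_\varepsilon=\mathrm{id}$ and $\eta^{j,k}_\varepsilon\circ\eta^{i,j}_\varepsilon=\eta^{i,k}_\varepsilon$ for $i\le j\le k$. For $\varepsilon=0$ with $i<k$ both sides of the composition law are zero maps, and when $i=j<k$ or $i<j=k$ the composite reduces to the zero map. Hence $\cdot_\varepsilon$ is the usual direct-system product $(i,u)(j,v)=(k,\eta^{i,k}(u)+\eta^{j,k}(v))$. Associativity then follows by transporting all three factors to $\max\{i,j,l\}$, and commutativity is inherited from $+$. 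The element $e_0$ is an identity, since $\eta^{0,j}_\varepsilon(0_P)=0_P$.

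\emph{Orders and isotonicity.} I expect this to be the main obstacle, because it is the only step with genuine case work. Reflexivity and antisymmetry are routine. For $\le_t$, transitivity uses $\delta_{ij}+\delta_{jl}\ge\delta_{il}$ together with translation invariance and monotonicity of $\le_\ell$ under adding $w$. Antisymmetry across distinct $i\ne j$ holds because one of $\delta_{ij},\delta_{ji}$ equals $1$, and adding $tw$ is strictly lex-increasing. The order $\le_{\mathrm{sh}}$ is handled the same way, with $\prec$ absorbing one step. Isotonicity is checked under multiplication by an arbitrary $(l,z)$, by cases on the position of $l$ relative to $i$ and $j$.
\begin{itemize}
\item For $\varepsilon=1$, multiplication sends $(i,u)\mapsto(\max\{i,l\},u+z)$. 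The first coordinates satisfy $\delta_{\max\{i,l\},\max\{j,l\}}\le\delta_{ij}$, and $+z$ preserves $\le_\ell$, $\preceq$ and $\prec$. This covers rows (b)--(e) uniformly.
\item For row (a), if $l$ exceeds both $i$ and $j$, both products become $(l,z)$.
\item If $l$ is strictly between $i$ and $j$, the first coordinates already decide the comparison.
\item Otherwise the comparison reduces to translation by $z$ in $\le_\ell$.
\end{itemize}

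\emph{Local units and idempotents.} An idempotent $(i,u)$ needs $u+u=u$, so $u=0_P$; hence the idempotents are exactly the $e_i$. For positivity of $e_i$ against an arbitrary $x=(j,v)$ one checks $x\le xe_i$ in each order. If $j\ge i$ then $xe_i=x$. If $j<i$, then $xe_i$ is $(i,0_P)$ in row (a), and this lies above $x$ because $j<i$; in rows (b)--(e) it is $(i,v)$, which lies above $x$ because $\delta_{ji}=0$ and $v\le v$.

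The right local units of $(i,u)$ are computed next. For $\varepsilon=1$ they are $\{e_j:j\le i\}$. For $\varepsilon=0$ they are $\{(j,v):j<i\}\cup\{e_i\}$. In each of the five orders the element $e_i$ is the greatest of this set. Commutativity makes left and right local units agree, so $\tau(i,u)=e_i$, and local-unit alignment holds.

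\emph{Skeleton and collapse.} Each layer is $\{i\}\times P$, and products of layers land in layer $\max\{i,j\}$. Hence the singleton partition is $\tau$-multiplication-coherent. It is trivially the finest partition, so by Theorem~\ref{thm:finest-coherent} it equals $\Cm(\mathbf S)$, with $\{e_i\}\le_{\rm m}\{e_j\}$ if and only if $i\le j$. Every block is a singleton, so every block is lower-pointed. The receiving minima $e_i$ are skeleton-monotone, since $e_i\le e_j$ for $i\le j$ in all five orders. Finally, $\rho(i,u)=(i,u)e_j=(j,\eta^{i,j}_\varepsilon(u))=f^{i,j}_\varepsilon(i,u)$.
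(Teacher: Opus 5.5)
Your proposal is correct and follows essentially the same route as the paper: associativity via the direct-system (strong-semilattice) product, the order checks with $\delta_{\max\{i,l\},\max\{j,l\}}\le\delta_{ij}$ for rows (b)--(e), explicit local-unit sets with $e_i$ greatest, and the singleton partition as $\Cm(\mathbf S)$ with collapse given by multiplication by $e_j$. One case in row~(a) isotonicity needs sharpening: when $i<l=j$ the comparison becomes $(l,z)\le(l,v+z)$, which needs $0_P\le_\ell v$ (the paper's $z\le_\ell v+z$) rather than a translate of the original hypothesis. Your explicit check that the $e_i$ are skeleton-monotone, so that the least-element collapse is actually defined, is slightly more complete than the paper's proof.
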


\begin{proof}
The maps $\eta_\varepsilon^{i,j}$ are additive and compose along $I$.
Consequently, a finite product has maximal first coordinate $m$ and second
coordinate
\[
\sum_r\eta_\varepsilon^{i_r,m}(u_r),
\]
independently of its bracketing. Thus $\cdot_\varepsilon$ is associative
and commutative, $e_0$ is its identity, and its idempotents are precisely
the $e_i$.

The first two displayed orders are respectively an ordinal-sum order and a
product order. For $t=1,2$, transitivity of $\le_t$ follows from
$\delta_{ik}\le\delta_{ij}+\delta_{jk}$, and antisymmetry follows because
opposite cross-level comparisons would add the positive vector $tw$.
Transitivity of $\le_{\mathrm{sh}}$ follows because a composite
higher-to-lower comparison contains a strict coordinatewise step;
antisymmetry is immediate. For rows~(b)--(e), multiplication by $(k,z)$
sends
\[
(i,u)\longmapsto(\max\{i,k\},u+z).
\]
This preserves the first-coordinate requirements and the component orders;
for $\le_t$, use
$\delta_{\max\{i,k\},\max\{j,k\}}\le\delta_{ij}$.
For row~(a), isotonicity follows directly from the two ordinal-sum cases:
same-level comparisons are preserved by translation, while a strict
lower-level comparison remains so unless both products coincide or enter one
level, where $z\le_\ell v+z$. Hence all five structures are ordered
monoids.

In rows~(b)--(e), multiplication by $e_i$ either fixes $(j,u)$ or moves it
up to $(i,u)$; in row~(a), it either fixes $(j,u)$ or moves it to the
higher element $e_i$. Thus every $e_i$ is positive. In rows~(b)--(e), an
element $(j,v)$ is a local unit of $(i,u)$ exactly when $j\le i$ and
$v=0_P$; their greatest member is $e_i$. In row~(a), every lower-level
element is a local unit of
$(i,u)$, at level $i$ only $e_i$ is one, and all lower-level elements
lie below $e_i$. Thus $\tau(i,u)=e_i$ in every row. Products from levels
$i$ and $j$ lie in level $\max\{i,j\}$, so the singleton partition of
the $e_i$'s is $\tau$-multiplication-coherent and therefore equals
$\Cm(\mathbf S)$. Finally, multiplication by $e_j$ is precisely
$f_\varepsilon^{i,j}$, proving the collapse assertion.
\end{proof}

\begin{example}[Five separation families over arbitrary chain skeletons with a least element]
\label{ex:nontrivial-skeleton-oriented}
The order properties are as follows. Since $I$ is a chain, the
dlex/shadow column may equivalently be read as directed lexicographicity or
strict upper-shadow reflection, by
Proposition~\ref{prop:least-element-collapse-order-implications}
\eqref{item:chain-skeleton-shadow-dlex}.
\begin{center}
\scriptsize
\begin{tabular}{@{}c c c c c c c@{}}
\hline
row & collapse & comp. chain & total & skel.-oriented & dlex/shadow & cross-block total\\
\hline
(a) & $f_0^{i,j}$ & yes & yes & yes & yes & yes\\
(b) & $f_1^{i,j}$ & yes & no  & yes & no  & no \\
(c) & $f_1^{i,j}$ & yes & yes & no  & yes & yes\\
(d) & $f_1^{i,j}$ & yes & no  & no  & no  & no \\
(e) & $f_1^{i,j}$ & no  & no  & no  & yes & no \\
\hline
\end{tabular}
\end{center}

To verify the row-dependent entries, fix $r<s$ in $I$. The first four
orders satisfy
\[
\le_\uparrow\ \subsetneq\ \le_{\mathrm{gap}}\ \subsetneq\
\ \le_{\mathrm{dlex_{\uparrow}}},
\qquad
\le_\uparrow\ \subsetneq\ \le_{\mathrm{dlex_{uc}}},
\]
while $\le_{\mathrm{dlex_{uc}}}$ is incomparable with
$\le_{\mathrm{gap}}$ and $\le_{\mathrm{dlex_{\uparrow}}}$. The strict
inclusions and incomparabilities are witnessed by
\[
(s,0_P)\le_{\mathrm{gap}}(r,2w),
\qquad
(s,0_P)\le_{\mathrm{dlex_{\uparrow}}}(r,w),
\qquad
(r,w)\le_{\mathrm{dlex_{uc}}}(s,0_P).
\]
Rows~(a) and~(b) admit no higher-to-lower comparison and are
skeleton-oriented. The first two displayed witnesses show that rows~(c)
and~(d) are not, and
\[
(s,0_P)\le_{\mathrm{sh}}(r,(1,0))
\]
does the same for row~(e).

Row~(a) is the directed lexicographic order for the zero transitions. Since
\[
u<_\ell v\quad\Longleftrightarrow\quad u+w\le_\ell v,
\]
row~(c) is the directed lexicographic order for the identity transitions;
row~(e) has exactly the corresponding non-strict and strict clauses for
$\preceq$. Rows~(b) and~(d) fail strict upper-shadow reflection, because
\[
(s,0_P)<(s,w)=f_1^{r,s}(r,w),
\]
but $(s,0_P)$ is below $(r,w)$ in neither of their ambient orders.

Rows~(a) and~(c) are total. Cross-block totality fails in rows~(b) and~(d)
because $(r,w)$ and $(s,0_P)$ are incomparable, and it fails in row~(e)
because $(r,(1,0))$ and $(s,(0,1))$ are incomparable. This proves every
entry in the table.

Thus rows~(a)--(d) realize all four combinations of skeleton orientation and
the equivalent chain-component principles. Rows~(b)--(d) have identical
component ordered monoids, multiplication, skeleton, and collapse maps but
different ambient orders. Row~(e) shows that the receiving-component chain
hypothesis in
Proposition~\ref{prop:least-element-collapse-order-implications}
\eqref{item:shadow-implies-cross-total} cannot be omitted. Since $I$ was
arbitrary, all these separations hold over chain skeletons of unrestricted
cardinality and order type, subject only to the presence of a least element.
\end{example}

\section*{Declarations}

\noindent\textbf{Ethical approval.}
Not applicable.

\noindent\textbf{Competing interests.}
The author declares that he has no competing interests.

\noindent\textbf{Authors' contribution.}
The author was solely responsible for the conception, formal analysis, investigation, and writing of the manuscript.

\noindent\textbf{Availability of data and materials.}
No datasets were generated or analyzed during the current study.

\noindent\textbf{Funding.}
This work was supported by the Ministry of Culture and Innovation of Hungary, through the National Research, Development and Innovation Fund, grant no.~K138596.

\end{document}